\documentclass[a4paper,10pt]{amsart}
\usepackage[backend=bibtex8]{biblatex}
\bibliography{Bibliography}

\usepackage[english]{babel} 
\usepackage[perpage,symbol*]{footmisc} 
\usepackage{esvect} 
\usepackage{subfig} 
\usepackage{placeins} 

\usepackage{amssymb}

\usepackage{array}
\usepackage{arydshln} 
\usepackage{color}


\usepackage[colorlinks]{hyperref} 

\usepackage[T1]{fontenc}
\usepackage[utf8]{inputenc} 

\usepackage{pgf,tikz} 
\usetikzlibrary{matrix,arrows,calc}

%

\title[MHS on the rational homotopy of intersection spaces]{Mixed Hodge Structures on the rational homotopy type of intersection spaces}

%
\author{Mathieu Klimczak}
\address{Université des Sciences et Technologies, Lille}
\email{mathieu.klimczak@ed.univ-lille1.fr}

\keywords{Intersection spaces, rational homotopy, mixed Hodge theory, weight spectral sequence, isolated singularities, formality}
\subjclass[2010]{55P62, 32S35, 55N33}




\newtheorem{thm}{Theorem}[section]
\newtheorem{defi}{Definition}[thm]
\newtheorem{cor}{Corollary}[thm]
\newtheorem{lem}{Lemma}[thm]
\newtheorem{propo}{Proposition}[thm]
\theoremstyle{definition} \newtheorem{exmp}{Example}[section]
\theoremstyle{remark} \newtheorem{remark}[thm]{Remark}

\newcommand{\tr}[2]{t_{#1}#2} 
\newcommand{\redHI}[3]{\widetilde{H}I_{\overline{#1}}^{#2}(#3)} 
\newcommand{\redhI}[3]{\widetilde{H}I^{\overline{#1}}_{#2}(#3)} 
\newcommand{\IH}[3]{IH_{\overline{#1}}^{#2}(#3)} 
\newcommand{\I}[2]{I^{\overline{#1}}#2} 
\newcommand{\Apl}[1]{\mathsf{A}_{\mathsf{PL}}(#1)} 
\newcommand{\Ho}[1]{\mathsf{Ho}(#1)} 
\newcommand{\cotr}[2]{t^{#1}#2} 
\newcommand{\AI}[2]{AI_{\overline{#1}}(#2)} 
\newcommand{\CDGA}[1]{\mathsf{CDGA}_{\mathbf{#1}}} 
\newcommand{\Top}{\mathsf{Top}} 
\newcommand{\dgaucotr}[2]{\xi_{+}^{\overline{#1}}#2} 
\newcommand{\dgacotr}[2]{\xi^{\overline{#1}}#2} 
\newcommand{\Dec}[1]{\mathrm{Dec}(#1)} 
\newcommand{\coim}[1]{\mathsf{C}_{\overline{#1}}} 
\newcommand{\E}[5]{E_{#2}^{#3,#4}(#5,#1)} 
\newcommand{\gr}[3]{\mathrm{gr}_{#1}^{#2}(#3)} 
\newcommand{\pb}[2]{\mathcal{J}_{\overline{#1}}(#2)} 
\newcommand{\cdga}[1]{\mathcal{P}_{n}^{op}\mathsf{CDGA}_{\mathbf{#1}}} 
\newcommand{\mhcdga}[1]{\mathcal{P}_{n}^{op}\mathbf{MH}\mathsf{CDGA}_{\mathbf{#1}}} 
\newcommand{\super}{\mathsf{Super}\mathcal{V}_{\mathbf{C}}} 
\newcommand{\MI}[2]{MI_{\overline{#1}}(#2)} 
\newcommand{\EI}[3]{EI_{#1,\overline{#2}}(#3)} 
\newcommand{\EIbidg}[4]{EI_{#1,\overline{#2}}^{#3}(#4)} 
\newcommand{\FI}[3]{FI_{\overline{#1}}^{#2}(#3)} 
\newcommand{\HI}[3]{HI_{\overline{#1}}^{#2}(#3)} 
\newcommand{\coker}[1]{\mathrm{coker\,}#1} 
\newcommand{\im}[1]{\mathrm{im\,}#1} 
\newcommand{\D}[1]{#1^{\vee}} 
\newcommand{\phip}[2]{\phi_{\overline{#1}}^{#2}} 
\newcommand{\psip}[2]{\psi_{\overline{#1}}^{#2}} 
\newcommand{\diff}[2]{\partial_{\overline{#1}}^{#2}} 
\newcommand{\difb}[2]{d_{\overline{#1}}^{#2}} 
\newcommand{\pomap}[2]{\varphi_{\overline{#1}, \overline{#2}}} 
\newcommand{\Pos}[1]{\mathcal{P}_{#1}}
\newcommand{\col}{\colon\thinspace} 
\newcommand{\IA}[2]{IA_{\overline{#1}}(#2)} 
\newcommand{\IE}[3]{IE_{#1,\overline{#2}}(#3)} 


\usepackage{pgf,tikz} 
\usetikzlibrary{matrix,arrows,calc}


\begin{document}

\begin{abstract}  
Let $X$ be a complex projective variety of complex dimension $n$ with only isolated singularities of simply connected links. We show that we can endow the rational cohomology of the family of the $\overline{p}$-perverse intersection spaces $\lbrace \I{p}{X} \rbrace_{(\overline{p})}$ with compatible mixed Hodge structures.
\end{abstract}
\maketitle

\section{Introduction}
This paper deals with the notion of mixed Hodge structure associated to the intersection spaces of a complex projective variety $X$ of complex dimension $n$ with only isolated singularities and simply connected links. 

Intersection spaces were defined by Markus Banagl in \cite{Banagl2010} as a way to spatialize Poincaré duality for singular spaces. Suppose given a compact, connected pseudomanifold of dimension $n$ with only isolated singularities and simply connected links. We assign to this space a family of topological spaces $\I{p}{X}$, its intersection spaces, where $\overline{p}$ is an element called a perversity varying in a poset $\Pos{n}$ called the poset of perversities. We then have for complementary perversities a generalized Poincaré duality isomorphism
\[
\redHI{p}{k}{X} \cong \D{\redhI{q}{n-k}{X}}.
\]
with $\D{\redhI{q}{n-k}{X}}=\hom(\redhI{q}{n-k}{X}, \mathbf{Q}) $

The theory of intersection spaces can be seen as an enrichment of intersection homology since they both gives complementary informations about $X$.

The aim of this paper is twofold. First we want to get a better understanding of the family of cohomology algebras $\lbrace \HI{\bullet}{\ast}{X} \rbrace_{\overline{p} \in \Pos{n}}$ when we take all the spaces into consideration. We then want to put a mixed Hodge structure on these algebras and get results about formality of intersections spaces.

Formality is a notion tied to the rational homotopy theory of topological spaces. The rational homotopy type of a topological space $X$ is given by the commutative differential graded algebra $\Apl{X}$ in the homotopy category $\mathsf{Ho}(\CDGA{Q})$ defined by formally inverting quasi-isomorphisms and where $\Apl{-} \col \Top \rightarrow \CDGA{Q}$ is  the polynomial De Rham functor defined by Sullivan. The space $X$ is then formal if there is a string of quasi-isomorphisms from the cdga $\Apl{X}$ to its cohomology with rational coefficients $H^{\ast}(\Apl{X}) \cong H^{\ast}(X, \mathbf{Q})$ seen as a cdga with trivial differential. In particular is $X$ is formal then its rational homotopy type is a formal consequence of its cohomology ring, its higher order Massey products vanish. 

The combination of rational homotopy theory and Hodge theory has already been showed to be fruitful. Using Hodge theory, Deligne, Griffiths, Morgan and Sullivan proved in \cite{Deligne1975} that compact Kähler manifolds, in particular smooth projective varieties, are formal. It was also shown by Simpson in \cite{Simpson2011} that every finitely presented group $G$ is the fundamental group of a singular projective variety $X$ and then Kapovich and Koll\'ar showed in \cite{Kapovich2014} that this $X$ could be chosen to be complex projective with only simple normal crossing singularities. More recently, Chataur and Cirici proved in \cite{Chataur2015} that every complex projective variety of dimension $n$ with only isolated singularities $\Sigma = \lbrace\sigma_{1}, \dots, \sigma_{\nu} \rbrace$ such that the link $L_{i}$ of each singularities $\sigma_{i}$ is $(n-2)$-connected is then a formal topological space.

The intersection spaces $\I{p}{X}$ of $X$ are not complex nor algebraic varieties, even if $X$ is. Thus at first glance there should be no reasons the cohomology of these spaces carry a mixed Hodge structure. On second thought, when $X$ is a complex projective variety of complex dimension $n$ with only isolated singularities and that we look at the rational cohomology of their intersection spaces
\[
\HI{p}{k}{X} =
\begin{cases}
\mathbf{Q} & k=0 \\
H^{k}(X) & 1 \leq k \leq p \\
H^{k}(X) \oplus \im{H^{k}(X_{reg}) \rightarrow H^{k}(L)} & k = p+1 \\
H^{k}(X_{reg}) & k > p+1
\end{cases}
\]
it becomes a bit more natural to think that there is a mixed Hodge structure since each part of their rational cohomology can be endowed with a natural mixed Hodge structure coming from $X$. We show here that in fact all these structures naturally come from a mixed Hodge structure at the algebraic models level and that this structure is compatible with the different operations defined on intersection spaces. Note that our definition of intersection spaces \ref{def:intersectionspace2} differs slightly from the original definition given in \cite{Banagl2010}.

It must be pointed out here that the question of a Hodge structure on the intersection spaces as already been looked at in the work of Banagl and Hunsicker \cite{Banagl2015} where they use $L^{2}$-cohomology to provide a Hodge theoretic structure. We do not follow this path here and rather modify the rational homotopy theory tools developed in \cite{Chataur} for the mixed Hodge structures in intersection cohomology.

We explain the contents of this paper. 

The section \ref{section:background} is devoted to collect the different definitions needed. We recall what we call a perversity, the definition of the intersection spaces and the convention we use to construct them. We also introduce the notion of a coperverse cdga which is the main tool for the rational algebraic models of the intersection spaces. We then define a model category structure on the category of coperverse cdga's \ref{thm:model_structure}.

The section \ref{section:coperv_algebras} is a direct application of the previous section. We define the notion of a coperverse cdga associated to a morphism of cdga's. As a result we show that the whole family of algebraic model $\AI{\bullet}{X}$ computing the rational cohomology of intersection spaces carry a structure of coperverse algebra and that we have a external product on that family, extending the cup product that each $\I{p}{X}$ naturally has as a topological space.

The section \ref{section:Hodge_th} is the main section of this paper, we extend our notion of coperverse cdga to the notion of coperverse mixed Hodge cdga. These coperverse mixed Hodge algebras carry a mixed Hodge structure which is compatible the differential, product and poset maps of the underlying coperverse cdga. After developing their algebraic definitions we show in theorem \ref{thm:coperverse_MI} that given a complex projective variety $X$ of complex dimension $n$ with only isolated singularities and simply connected links, there is a coperverse mixed Hodge cdga $\MI{\bullet}{X}$ quasi-isomorphic to the coperverse cdga $\AI{\bullet}{X}$. As a result the whole family $\HI{\bullet}{\ast}{X}$ carry a well defined mixed Hodge structure defined at the algebraic models level.

The section \ref{section:Weight_SS} is devoted to the computation of the associated weight spectral sequence. If $X$ is a complex projective algebraic variety with only isolated singularities and such that $X$ admits a resolution of singularities where the exceptional divisor is smooth, we are able to compute the weight spectral sequence associated to the mixed Hodge structure. We then use this spectral sequence to show a result of "purity implies formality" in theorem \ref{thm:pure_is_formal}.

The section \ref{sec:formality_3folds} is completely devoted to the proof of the theorem \ref{thm:3fold_formal} : suppose $X$ to be a complex projective algebraic threefold with isolated singularities such that there exist a resolution of singularities with a smooth exceptional divisor, then if the links are simply connected the intersection spaces $\I{p}{X}$ are formal topological spaces for any perversity $\overline{p}$. The proof being rather long and intricate, we made the choice of giving it its own section. This result goes well with the result of \cite[Theorem E p.76]{Chataur2012} stating that any nodal hypersurface in $\mathbf{C}P^{4}$ is intersection-formal.

The last section \ref{section:Eg} deals with computations, with for instance the computations for the Calabi-Yau generic quintic 3-fold \ref{subsec:CY_generic_quintic} and the Calabi-Yau quintic 3-fold \ref{subsec:CY_quintic} where we are able to retrieve the cohomology of the associated smooth deformation as stated in \cite{Banagl2012}.

\section{Background, intersection spaces and coperverse algebras}
\label{section:background}
\subsection{Perversities and intersection spaces}
Unless stated otherwise, all cohomology groups will be considered with rational coefficients and they will be omitted.

Since we are concerned about complex algebraic varieties of complex dimension $n$ with only isolated singularities we use the following definition of a perversity. A perversity $\overline{p}$ is determined by a integer $0 \leq p \leq 2n-2$, we then denote by $\Pos{n}^{op}$ the poset $\lbrace 0, \dots, 2n-2 ; \leq \rbrace$ with the reverse order and $\widehat{\Pos{n}}^{op} := \Pos{n}^{op} \cup \lbrace \infty \rbrace$. The posets $\Pos{n}^{op}$ and $\widehat{\Pos{n}}^{op}$ are then totally ordered and look like
\[
2n-2 \rightarrow 2n-3 \rightarrow \cdots \rightarrow 2 \rightarrow 1 \rightarrow 0.
\]
\[
\overline{\infty} \rightarrow 2n-2 \rightarrow 2n-3 \rightarrow \cdots \rightarrow 2 \rightarrow 1 \rightarrow 0.
\]

The maximal element is the zero perversity $\overline{0}=0$, the minimal element is the top perversity $\overline{t}=2n-2$ for $\Pos{n}^{op}$ and $\infty$ for $\widehat{\Pos{n}}^{op}$. The partial addition $\oplus$ is just the classical addition and we put $\overline{p} \oplus \overline{q} := \overline{p+q}$ if $p+q \leq 2n-2$ for $\Pos{n}^{op}$ and $\widehat{\Pos{n}}^{op}$. The complementary perversity $\overline{q}$ of $\overline{p}$ is then $\overline{q} = \overline{t}-\overline{p} = \overline{t-p}$.

If we do not consider complex varieties but just pseudomanifold of dimension $n$ with only isolated singularities, we will still use a linear poset 
\[
n-2 \rightarrow n-3 \rightarrow \cdots \rightarrow 2 \rightarrow 1 \rightarrow 0.
\]

Throughout this paper, every equation involving perversities will be considered in $\mathcal{P}^{op}$. For example $\max(\overline{p}, \overline{0}) = \overline{0}$ for all $\overline{p}$ and if $\overline{p}=2$ and $\overline{q}=1$, then $\overline{p} < \overline{q}$.

Intersection spaces were defined by Markus Banagl in \cite{Banagl2010} in an attempt to spatialize Poincaré duality for singular spaces. The construction of these spaces rely on the notion of \textit{spatial homology truncation} also introduced in \cite{Banagl2010}. 

\begin{defi}
Given a simply connected CW-complex $K$ of dimension $n$ and an integer $k \leq n$. A spatial homology truncation of cut-off degree $k$ of $K$ is a CW-complex $\tr{k}{K}$ together with a comparison map
\[
f \col \tr{k}{K} \longrightarrow K
\]
such that 

\begin{equation}
H_{r}(\tr{k}{K}) \cong
\begin{cases}
H_{r}(K)      &  r < k,\\
0             &  r \geq k.
\end{cases}
\end{equation}

The integer $k$ is called the cut off degree of the homological truncation.
\end{defi}

\begin{remark}
Such a truncation always exists provided that $K$ is simply connected and this truncation is in fact defined on $\mathbf{Z}$ and not just on $\mathbf{Q}$, see \cite{Banagl2010}.
\end{remark}

\begin{defi}
Let $X$ be a compact, connected, oriented pseudomanifold of dimension $n$ and denote by $\Sigma = \lbrace \sigma_{1}, \dots, \sigma_{\nu} \rbrace$ the singular locus of $X$. The pseudomanifold $X$ is called supernormal if the link $L_{i}$ of each singularity $\sigma_{i} \in \Sigma$ is simply connected. 

We denote by $\super$ the category of supernormal complex projective varieties with only isolated singularities together with the morphisms $f \col X \rightarrow Y$ such that $f(X_{reg}) \subset Y_{reg}$.
\end{defi}

For the rest of this paper, we assume that the definition of a supernormal pseudomanifold $X$ includes the fact that $X$ is a connected pseudomanifold of dimension $n$ (the compacity and orientability assumptions being automatic since we work in projective spaces $\mathbf{C}P^{n}$).

Before giving our definition of intersection spaces, let us define which cut off degree we use with respect to the perversities for the spatial homological truncation. This definition will be different from the one in \cite{Banagl2010} and will be more suited to our notion of coperverse cdga we will introduce in definition \ref{def:coperverse_cdga}.

Let $K$ be a simply connected CW-complex of dimension $n$ and suppose given a perversity $\overline{p}$. We set that the cut-off degree is directly given by the perversity $\overline{p}$ and we denote it by $\tr{\overline{p}}{K}$. That is

\begin{equation}
H_{r}(\tr{\overline{p}}{K}) \cong
\begin{cases}
H_{r}(K)      & \text{if } r \leq p\\
0                        & \text{if } r > p.
\end{cases}
\end{equation}

Note that we also swap the strict and large inequalities in the definition. We will use this convention for the rest of this paper. 

By convention we also define $\tr{\overline{\infty}}{K} = K$.

Given a supernormal pseudomanifold $X$ with isolated singularities, 
\[
L(\Sigma, X) := \sqcup_{\sigma_{i}} L_{i}
\]
is then the disjoint union of simply connected topological manifold of dimension $n-1$. Denote by $X_{reg} := X - \Sigma$ the regular part of $X$. We denote by $\cotr{\overline{p}}{L_{i}}$ the homotopy cofiber of the map 
\[
f_{i} \col \tr{\overline{p}}{L_{i}} \rightarrow L_{i}.
\] 
We have maps 
\[
f^{i} \col L_{i} \longrightarrow \cotr{\overline{p}}{L_{i}}.
\]

\begin{defi}
\label{def:intersectionspace2}
The intersection space $\I{p}{X}$ of the space $X$ is defined by the following homotopy pushout diagram 
\[
\begin{tikzpicture}
\matrix (m)[matrix of math nodes, row sep=2em, column sep=5em, text height=1.5ex, text depth=0.25ex]
{ L(\Sigma,X)                                 & X_{reg}       \\
  \bigsqcup_{i} \cotr{\overline{p}}{L_{i}} & \I{p}{X} \\};

\path[->]
(m-1-1) edge node[] {} (m-1-2);
\path[dashed,->]
(m-2-1) edge node[] {} (m-2-2);

\path[->]
(m-1-1) edge node[] {} (m-2-1);
\path[dashed,->]
(m-1-2) edge node[] {} (m-2-2);
\end{tikzpicture}
\]
\end{defi}

We shall use this definition of intersection spaces for the rest of the paper. Note that with this definition we have $\I{\infty}{X} = \overline{X}$ which is the normalization of $X$. We will denote by $\HI{p}{\ast}{X} := H^{\ast}(\I{p}{X})$ and by $\redHI{p}{\ast}{X}$ the reduced cohomology. We then have 

\[
\HI{p}{r}{X} =
\begin{cases}
\mathbf{Q} & r=0 \\
H^{r}(X) & 1 \leq r \leq p \\
H^{r}(X) \oplus \im{H^{r}(X_{reg}) \rightarrow H^{r}(L)} & r = p+1 \\
H^{r}(X_{reg}) & r > p+1
\end{cases}
\]

In particular, we have $\HI{0}{\ast}{X} = H^{\ast}(X_{reg})$ and $\HI{\infty}{\ast}{X} = H^{\ast}(\overline{X})$.
\begin{remark}
\begin{enumerate}
\item Our intersection spaces $\I{p}{X}$ are different from the intersection spaces originally defined in \cite{Banagl2010} since they are not defined as a homotopy cofiber. When there is only one isolated singularity, there is no difference between the two definitions. Differences arise only for the first cohomology group when there is more than one isolated singularity. 
\item This convention also has to be compared at the level of algebraic models with \cite{Chataur2012}, where a $\overline{p}$-perverse rational model of a cone $cL$ on a topological space $L$ of dimension $n$ is given by a truncation in degree $\overline{p}(n)$ of the rational model of $L$. In our case, a rational model of the intersection space $\I{p}{cL}$ is then given by a unital cotruncation in degree $\overline{p}(n)$ of the rational model of $L$.
\end{enumerate}
\end{remark}

Let's compute the bounds of the different weight filtrations involved in $\HI{p}{r}{X}$ for a general perversity $\overline{p}$. Denote by $R^{r}(X_{reg},L) := \im{H^{r}(X_{reg}) \rightarrow H^{r}(L)}$.

\begin{lem}
For $r < n$, $R^{r}(X_{reg},L)$ is pure of weight $r$. For $r \geq n$, we have  
\[
0 = W_{r} \subset W_{r+1} \subset \cdots \subset W_{2r} = R^{r}(X_{reg},L).
\]
\end{lem}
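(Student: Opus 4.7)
The plan is to study $R^{r}(X_{reg},L)$ through the morphism of mixed Hodge structures $H^{r}(X_{reg}) \to H^{r}(L)$, and to combine two complementary classical bounds on the weights of source and target. The mixed Hodge structures on both sides will be the ones induced by the algebraic model $\MI{\bullet}{X}$ constructed in Section \ref{section:Hodge_th}, so that $R^{r}(X_{reg},L)$ is simultaneously a quotient of $H^{r}(X_{reg})$ and a sub-MHS of $H^{r}(L)$.

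The first input is Deligne's weight bound for smooth quasi-projective varieties: since $X_{reg}$ is open in a smooth projective resolution of $X$, one has $W_{r-1}H^{r}(X_{reg}) = 0$ and $W_{2r}H^{r}(X_{reg}) = H^{r}(X_{reg})$, so the weights on $H^{r}(X_{reg})$ lie in $[r,2r]$. The second input is the complementary bound on the link due to Durfee--Hain (or Steenbrink, via the resolution): for the link $L = \bigsqcup_{i} L_{i}$ of isolated singularities of a complex $n$-dimensional variety, $H^{r}(L)$ has weights $\leq r$ when $r < n$, and weights $\geq r+1$ when $r \geq n$.

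Once these two ingredients are in place, the conclusion is purely formal. For $r<n$, $R^{r}$ inherits weights $\geq r$ as a quotient of $H^{r}(X_{reg})$ and weights $\leq r$ as a subobject of $H^{r}(L)$, so it is pure of weight $r$. For $r\geq n$, it inherits weights $\leq 2r$ from $H^{r}(X_{reg})$ and weights $\geq r+1$ from $H^{r}(L)$, which gives $W_{r}R^{r} = 0$ and $W_{2r}R^{r} = R^{r}$, and the chain of inclusions
\[
0 = W_{r}R^{r} \subset W_{r+1}R^{r} \subset \cdots \subset W_{2r}R^{r} = R^{r}
\]
then follows from the fact that the weight filtration is increasing.

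The main obstacle is really the compatibility step: one must check that the MHS on $R^{r}(X_{reg},L)$ coming from the coperverse mixed Hodge cdga $\MI{\bullet}{X}$ agrees, through the natural long exact sequence of the pair $(X,X_{reg})$, with the classical MHS of Deligne on $H^{r}(X_{reg})$ and of Durfee--Hain on $H^{r}(L)$. Once this identification is granted, the weight bounds transfer by functoriality and the two cases of the lemma reduce to intersecting the two weight ranges above.
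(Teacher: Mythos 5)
Your proof is correct and follows essentially the same route as the paper: semi-purity of the weights on $H^{r}(L)$ (Steenbrink), the weight bounds $W_{r-1}H^{r}(X_{reg})=0$ and $W_{2r}H^{r}(X_{reg})=H^{r}(X_{reg})$ for the smooth open variety $X_{reg}$, and strictness of the morphism of mixed Hodge structures $H^{r}(X_{reg}) \to H^{r}(L)$, whose image therefore carries both sets of bounds. The only difference is that you raise the compatibility with the cdga-level structure $\MI{\bullet}{X}$ as the "main obstacle," whereas the lemma (placed in the background section) concerns the classical mixed Hodge structures directly, so that step is not actually needed here.
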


\begin{proof}
This follows from the semi purity of the link, see \cite{Steenbrink1983}. Since $\dim(\Sigma) =0$, the weight filtration on the cohomology of the link is semi-pure, meaning :
\begin{itemize}
\item the weights on $H^{r}(L)$ are less than or equal to $r$ for $r <n$,
\item the weights on $H^{r}(L)$ are greater or equal to $r+1$ for $r \geq n$.
\end{itemize}

Combined with the two following facts 
\begin{itemize}
\item The filtration $0 \subset W_{r} \subset \cdots \subset W_{2r} = H^{r}(X_{reg})$.
\item $H^{r}(X_{reg}) \rightarrow H^{r}(L)$ is a morphism of mixed Hodge structures.
\end{itemize} 
\end{proof}

We have three cases

\noindent\makebox[\textwidth]{
\(
\begin{array}{|c||c | c c c| c : c|} 
\hline
\multicolumn{7}{|c|}{\text{First case : \,}\overline{p} < \overline{m} =n-1} \\
\hline
       & 1 \leq r \leq \overline{p} &   \multicolumn{3}{c|}{r=p+1}                             & \overline{p}+1 <r<n       & n \leq r    \\
\hline
-1     &            0               &         0           &        &                           &                           &    \\
0      &      W_{0}                 &       W_{0}         &        &                           &                           &    \\
1      & W_{1}                      &       W_{1}         &        &                           &                           &    \\
\vdots & \vdots                     &       \vdots        &        &                           &                           &        \\
r-1    & W_{r-1}                    &       W_{r-1}       &        &         W_{r-1}=0         &      W_{r-1}=0            & W_{r-1}=0    \\
\hline
r      & W_{r}                      &       W_{r}         & \oplus &         W_{r}             &      W_{r}                & W_{r}    \\
\hline
r+1    &                            &                     &        &                           &                           & W_{r+1}    \\
\vdots &                            &                     &        &                           &                           & \vdots   \\
2r-1   &                            &                     &        &                           &                           & W_{2r-1}    \\
2r     &                            &                     &        &                           &                           & W_{2r}    \\
\hline
       & H^{r}(X)                   & H^{r}(X)            & \oplus & R^{r}(X_{reg},L)          & H^{r}(X_{reg})            & H^{r}(X_{reg})  \\
\hline
\end{array}  
\)}

\noindent\makebox[\textwidth]{
\(
\begin{array}{|c||c | c c c|  c|} 
\hline
\multicolumn{6}{|c|}{\text{Second case : \,}\overline{p} = \overline{m} =n-1} \\
\hline
       & 1 \leq r \leq n-1          &   \multicolumn{3}{c|}{r=p+1=n}                                                                          & n \leq r    \\
\hline
-1     &  0                         &           0         &        &                                                                          &    \\
0      & W_{0}                      &        W_{0}        &        &                                                                          &    \\
1      & W_{1}                      &        W_{1}        &        &                                                                          &    \\
\vdots & \vdots                     &         \vdots      &        &                                                                          &        \\
r-1    & W_{r-1}                    &         W_{r-1}     &        &                                                                          & W_{r-1}=0    \\
\hline
r      & W_{r}                      &         W_{r}       & \oplus &         W_{r}=0                                                          & W_{r}    \\
\hline
r+1    &                            &                     &        &         W_{r+1}                                                          & W_{r+1}    \\
\vdots &                            &                     &        &         \vdots                                                           & \vdots   \\
2r-1   &                            &                     &        &         W_{2r-1}                                                         & W_{2r-1}    \\
2r     &                            &                     &        &         W_{2r}                                                           & W_{2r}    \\
\hline
       & H^{r}(X)                   & H^{r}(X)            & \oplus & R^{r}(X_{reg},L)                                                         & H^{r}(X_{reg})  \\
\hline  
\end{array}  
\)}

\noindent\makebox[\textwidth]{
\(
\begin{array}{|c||c : c | c c c| c|} 
\hline
\multicolumn{7}{|c|}{\text{Third case : \,}\overline{p} > \overline{m} = n-1} \\
\hline
       & 1 \leq r \leq n &  n  < r \leq p        &  \multicolumn{3}{c|}{ r=p+1}                                                                                & p+1<r    \\
\hline
-1     & 0               &                       &                    &        &                                                                              &    \\
0      & W_{0}           &                       &                     &        &                                                                              &    \\
1      & W_{1}           &                       &                     &        &                                                                              &    \\
\vdots & \vdots          &                       &                     &        &                                                                              &        \\
r-1    & W_{r-1}         &       W_{r-1}=0       &            W_{r-1}=0&        &                                                                              & W_{r-1}=0    \\
\hline
r      & W_{r}           &       W_{r}           &            W_{r}    & \oplus &         W_{r}=0                                                              & W_{r}    \\
\hline
r+1    &                 &                       &                     &        &         W_{r+1}                                                              & W_{r+1}    \\
\vdots &                 &                       &                     &        &         \vdots                                                               & \vdots   \\
2r-1   &                 &                       &                     &        &         W_{2r-1}                                                             & W_{2r-1}    \\
2r     &                 &                       &                     &        &         W_{2r}                                                               & W_{2r}    \\
\hline
       & H^{r}(X)        & H^{r}(X)              & H^{r}(X)            & \oplus & R^{r}(X_{reg},L)                                                             & H^{r}(X_{reg})  \\
\hline
\end{array}  
\)}

\subsection{Coperverse algebras and their homotopy theory}
\subsubsection{Coperverse algebras}

Let $\mathbf{k}$ be a fixed field of characteristic zero.

\begin{defi}
\label{def:coperverse_cdga}
A $n$-coperverse commutative differential graded algebra over $\mathbf{k}$, coperverse cdga for short, is a functor
\[
A_{\overline{\bullet}} \col \Pos{n}^{op} \longrightarrow \mathsf{CDGA}_{\mathbf{k}}.
\]

That is for all perversities $\overline{p} \in \Pos{n}^{op}$, $A_{\overline{p}}$ is a bigraded $\mathbf{k}$-algebra $(A_{\overline{p}}^{k})_{k \in \mathbf{N}}$, together with a linear differential $d \col A_{\overline{p}}^{k} \rightarrow A_{\overline{p}}^{k+1}$ and an associative product $\mu \col A_{\overline{p}}^{i} \times A_{\overline{p}}^{j} \rightarrow A_{\overline{p}}^{i+j}$.

We assume that products and differentials satisfy graded commutativity, Leibniz rules, and are compatible with poset maps. That is for every $\overline{p} \leq \overline{q}$ in $\Pos{n}^{op}$ we have the following commutative diagrams.

\[
\begin{tikzpicture}
\matrix (m)[matrix of math nodes, row sep=3em, column sep=5em, text height=2ex, text depth=0.25ex]
{  A_{\overline{p}} \times A_{\overline{p}}    &  A_{\overline{p}}  \\
   A_{\overline{q}} \times A_{\overline{q}}    &  A_{\overline{q}} \\};

\path[->]
(m-1-1) edge node[auto] {$\mu$} (m-1-2);
\path[->]
(m-2-1) edge node[auto,swap] {$\mu$} (m-2-2);
\path[->]
(m-1-1) edge node[auto,swap] {$(\pomap{p}{q}, \pomap{p}{q})$} (m-2-1);
\path[->]
(m-1-2) edge node[auto] {$\pomap{p}{q}$} (m-2-2);
\end{tikzpicture}
\quad
\begin{tikzpicture}
\matrix (m)[matrix of math nodes, row sep=3em, column sep=5em, text height=2ex, text depth=0.25ex]
{  A_{\overline{p}}     &  A_{\overline{p}}  \\
   A_{\overline{q}}     &  A_{\overline{q}} \\};

\path[->]
(m-1-1) edge node[auto] {$d$} (m-1-2);
\path[->]
(m-2-1) edge node[auto,swap] {$d$} (m-2-2);
\path[->]
(m-1-1) edge node[auto,swap] {$\pomap{p}{q}$} (m-2-1);
\path[->]
(m-1-2) edge node[auto] {$\pomap{p}{q}$} (m-2-2);
\end{tikzpicture}
\]

We denote by $H_{\overline{\bullet}}(A, \mathbf{k}) := H(A_{\overline{\bullet}},d)$.
\end{defi}

We denote by $\cdga{k}$ the category of coperverse cdga's over $\mathbf{k}$. 

Note that with this definition, we have an extended product over the whole family $(A_{\overline{p}})_{\overline{p} \in \Pos{n}^{op}}$. Indeed, for every $\overline{p} \leq \overline{q}$ in $\Pos{n}^{op}$, denote by $\mu_{\overline{p},\overline{q}}$ the following composition

\[
\mu_{\overline{p},\overline{q}} \col A_{\overline{p}} \times A_{\overline{q}} \overset{(\pomap{p}{q}, \mathrm{id})}{\longrightarrow} A_{\overline{q}} \times A_{\overline{q}} \overset{\mu}{\longrightarrow} A_{\overline{q}}.
\]

\begin{defi}
The map $\mu_{\overline{\bullet},\overline{\bullet}}$ defined for all $\overline{p} \leq \overline{q}$ in $\mathcal{P}^{op}$ by the above composition is called the extended product over the family $(A_{\overline{p}})_{\overline{p} \in \mathcal{P}^{op}}$.
\end{defi}

\begin{remark}
\begin{enumerate}
\item 
The following diagram, where $T$ is the twist isomorphism $T(a,b) := (-1)^{|a|\cdot|b|}(b,a)$, commutes. Because of that and for the sake of simplicity, we will then adopt the following convention. Each time a product $A_{\overline{p}} \times \cdots \times A_{\overline{q}}$ will appear, we will consider that the perversities are put in order, that is $\overline{p} \leq \cdots \leq \overline{q}$ in $\Pos{n}^{op}$.

\[
\begin{tikzpicture}
\matrix (m)[matrix of math nodes, row sep=3em, column sep=5em, text height=2ex, text depth=0.25ex]
{ A_{\overline{p}} \times A_{\overline{q}}  &  A_{\overline{q}} \times A_{\overline{q}} & A_{\overline{q}}\\
  A_{\overline{q}} \times A_{\overline{p}}  &  A_{\overline{q}} \times A_{\overline{q}} & \\};

\path[->]
(m-1-1) edge node[auto] {$(\pomap{p}{q}, \mathrm{id})$} (m-1-2);
\path[->]
(m-2-1) edge node[auto] {$(\mathrm{id},\pomap{p}{q})$} (m-2-2);
\path[->]
(m-1-1) edge node[auto,swap] {$T$} (m-2-1);
\path[->]
(m-1-2) edge node[auto] {$T$} (m-2-2);
\path[->]
(m-1-2) edge node[auto] {$\mu$} (m-1-3);
\path[->]
(m-2-2) edge node[auto,swap] {$\mu$} (m-1-3);
\end{tikzpicture}
\]

\item The extended product $\mu_{\overline{\bullet},\overline{\bullet}}$ verifies Leibniz rule, is associative and compatible with poset maps and morphisms of coperverse algebras. That is all $\overline{p} \leq \overline{q} \leq \overline{r}$ in $\Pos{n}^{op}$ we have the commutative diagram,

\[
\begin{tikzpicture}
\matrix (m)[matrix of math nodes, row sep=3em, column sep=5em, text height=2ex, text depth=0.25ex]
{  A_{\overline{p}} \times A_{\overline{q}} \times A_{\overline{r}}    &  A_{\overline{p}} \times A_{\overline{r}}  \\
   A_{\overline{q}} \times A_{\overline{r}}                            &  A_{\overline{r}} \\};

\path[->]
(m-1-1) edge node[auto] {$(\mathrm{id}, \mu_{\overline{q},\overline{r}})$} (m-1-2);
\path[->]
(m-1-1) edge node[auto,swap] {$(\mu_{\overline{p},\overline{q}}, \mathrm{id})$} (m-2-1);
\path[->]
(m-2-1) edge node[auto,swap] {$\mu_{\overline{q},\overline{r}}$} (m-2-2);
\path[->]
(m-1-2) edge node[auto] {$\mu_{\overline{p},\overline{r}}$} (m-2-2);
\end{tikzpicture}
\]
and for all $\overline{p_{1}} \leq \overline{p_{2}} \leq \overline{q_{1}} \leq \overline{q_{2}}$ in $\Pos{n}^{op}$ we have the commutative diagram.
\[
\begin{tikzpicture}
\matrix (m)[matrix of math nodes, row sep=3em, column sep=5em, text height=2ex, text depth=0.25ex]
{  A_{\overline{p_{1}}} \times A_{\overline{q_{1}}} & A_{\overline{q_{1}}} \\
   A_{\overline{p_{2}}} \times A_{\overline{q_{2}}} & A_{\overline{q_{2}}} \\};

\path[->]
(m-1-1) edge node[auto] {$\mu_{\overline{p_{1}},\overline{q_{1}}}$} (m-1-2);
\path[->]
(m-1-1) edge node[auto,swap] {$\pomap{p_{1}}{p_{2}} \times \pomap{q_{1}}{q_{2}}$} (m-2-1);
\path[->]
(m-2-1) edge node[auto,swap] {$\mu_{\overline{p_{2}},\overline{q_{2}}}$} (m-2-2);
\path[->]
(m-1-2) edge node[auto] {$\pomap{q_{1}}{q_{2}}$} (m-2-2);
\end{tikzpicture}
\]
\end{enumerate}
\end{remark}

Since $\mu_{\overline{p},\overline{p}} = \mu$ for all $\overline{p}$ we will always consider the family $(A_{\overline{p}})_{\overline{p} \in \Pos{n}^{op}}$ endowed with the extended product. We then denote a coperverse cdga by $(A_{\overline{\bullet}}, \mu_{\overline{\bullet},\overline{\bullet}})$.

\subsubsection{Homotopy theory of coperverse algebras}

We now define a model structure on the category of coperverse cdga's by using the formalism of Reedy categories. The definitions and results involving Reedy categories can be found in \cite{Hovey1999}.

First, recall the model structure of $\mathsf{CDGA}_{\mathbf{k}}$. The projective model structure on $\mathsf{CDGA}_{\mathbf{k}}$ is given by the following
\begin{itemize}
\item the weak equivalences are the quasi-isomorphims,
\item the fibrations are the degreewise surjections,
\item the cofibrations are the retracts of relative Sullivan algebras.
\end{itemize}

For $n \in \mathbf{N}$, consider the semifree dga's
\[
S(n) := (\wedge \mathbf{k}[n],d=0)
\]
where $\mathbf{k}[n]$ denotes the graded vector space which is $\mathbf{k}$ in degree $n$ and $0$ otherwise. For $n \geq 1$, consider the semifree dga's
\[
D(n) := 
\begin{cases}
0 & n=0, \\
(\wedge(\mathbf{k}[n+1] \oplus \mathbf{k}[n]),d=0) & n>0
\end{cases}
\]
and write 
\[
i_{n} \col S(n) \rightarrow D(n)
\]
for the morphism that send the generator of degree $n$ to the generator of degree $n$. If $n=0$ then this is the unique morphism $0 \rightarrow 0$, and for $n>0$
\[
j_{n} \col 0 \rightarrow D(n).
\]

\begin{propo}
\label{prop:gen_cofib_cdga}
The sets $I := \lbrace i_{n} \rbrace_{n} \cup \lbrace S(0) \rightarrow 0\rbrace$, and $J:= \lbrace j_{n} \rbrace_{n>0}$ are the sets of generating cofibrations and acyclic cofibrations, respectively, of $\mathsf{CDGA}_{\mathbf{k}}$. The category $\mathsf{CDGA}_{\mathbf{k}}$ is then cofibrantly generated.
\end{propo}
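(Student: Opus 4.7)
The plan is to apply the standard recognition theorem for cofibrantly generated model categories (Hovey, Theorem~2.1.19 of \emph{Model Categories}) to the candidate sets $I$ and $J$. This reduces the proposition to four verifications: smallness of the domains of $I$ and $J$; closure of the class of quasi-isomorphisms under 2-out-of-3 and retracts; the inclusion of $J$-cell maps into quasi-isomorphisms and into $I$-cof; and the identification of $I$-inj and $J$-inj with the desired trivial fibrations and fibrations.

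First I would verify smallness. The cdga's $0$, $S(n)$, and $D(n)$ are finitely presented over $\mathbf{k}$ (each has finitely many generators and only the relations forced by graded commutativity and the differential), hence small relative to any class of morphisms. This legitimises the small object argument for both $I$ and $J$.

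The main computation is the explicit description of $J$-inj and $I$-inj by elementary lifting diagrams. Using that $D(n)$ is acyclic and free on one generator in degree $n$ and one in degree $n{+}1$, a lift for $j_n \col 0 \to D(n)$ against a morphism $f \col A \to B$ reduces to finding a preimage under $f$ of the top-degree generator of $D(n)$ in $B$; letting $n$ vary, this is equivalent to $f$ being surjective in every positive degree, i.e.\ a fibration. Similarly, a lift for $i_n \col S(n) \to D(n)$ translates into the following condition: for every cocycle $a \in A^n$ whose image in $B$ is a coboundary $dv$, there exists $u \in A^{n+1}$ with $du = a$ and $f(u) = v$. Together with the auxiliary generator $S(0) \to 0$, which enforces surjectivity in degree $0$, this is precisely the condition that $f$ be a surjective quasi-isomorphism, i.e.\ a trivial fibration.

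The remaining verifications are standard: quasi-isomorphisms satisfy 2-out-of-3 and are closed under retracts by the long exact sequence in cohomology, and each $j_n$ is both a quasi-isomorphism (since $D(n)$ is acyclic) and a relative free extension, hence a cofibration. Applying the recognition theorem, one obtains a cofibrantly generated model structure with the expected weak equivalences and fibrations; the cofibrations arise as retracts of $I$-cell complexes, and since an $I$-cell complex is by construction a transfinite composition of pushouts along the $i_n$, which are exactly the free extensions assembling a relative Sullivan algebra, these coincide with the retracts of relative Sullivan algebras. The principal difficulty lies in the lifting analysis for the $i_n$: one must translate the algebraic lifting data coming from the free commutative structure of $D(n)$ into the chain-level condition defining a quasi-isomorphism, and dovetail this with the degree-zero adjustment provided by the extra generator $S(0) \to 0$.
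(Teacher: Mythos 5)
The paper offers no proof of this proposition --- it is recalled as the classical description of the projective (Bousfield--Gugenheim) model structure on $\mathsf{CDGA}_{\mathbf{k}}$ --- so your recognition-theorem argument is exactly the standard route, and its architecture (smallness of the domains, two-out-of-three, identification of $I$-inj and $J$-inj by elementary lifting squares) is the right one. Two points in the lifting analysis need repair, though. First, the degree bookkeeping: for a cocycle $a \in A^{n}$ with $f(a) = dv$, the filler $u$ with $du = a$ must live in degree $n-1$, not $n+1$, since $d$ raises degree by one. The confusion is partly inherited from the paper's own definition of $D(n)$, which as written has generators in degrees $n$ and $n+1$ and zero differential (hence is not acyclic, and the stated $i_{n}$ is not even a chain map); you should first fix the convention to $D(n) = (\wedge(x_{n-1},y_{n}),\, dx = y)$ with $i_{n}$ hitting the closed generator $y$, and only then run the lifting computation.

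Second, and more substantively, the auxiliary map $S(0) \to 0$ does not ``enforce surjectivity in degree $0$''. The target here is the zero algebra, which is terminal in $\mathsf{CDGA}_{\mathbf{k}}$, so a commutative square against $S(0) \to 0$ exists only when $B = 0$, and the right lifting property amounts to the implication ``$B = 0 \Rightarrow A = 0$''. That implication is exactly what is needed to exclude maps such as $\mathbf{k} \to 0$, which lift against every $i_{n}$ with $n \geq 1$ (the source being acyclic in positive degrees) yet are not quasi-isomorphisms; surjectivity onto degree-$0$ cocycles is instead extracted from the lifting property against $i_{1}$ by taking the top horizontal map to be zero. With these two corrections the identification of $I$-inj with the surjective quasi-isomorphisms, and hence the whole recognition-theorem argument, goes through.
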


Before talking about Reedy categories, note that we have an exact evaluation functor
\[
Ev_{\overline{p}} \col \cdga{k} \longrightarrow \mathsf{CDGA}_{\mathbf{k}}
\]
that send $A_{\overline{\bullet}}$ to $A_{\overline{p}}$, this functor admits an exact left adjoint $F_{\overline{p}}$ defined by $F_{\overline{p}}(A)_{\overline{q}} = A$ if $\overline{p} \leq \overline{q}$ and zero otherwise.

\begin{defi}
Let $\mathcal{C}$ be a small category and $\mathcal{C}' \subset \mathcal{C}$ a subcategory. The subcategory $\mathcal{C}'$ is said to be a lluf subcategory if the objects of $\mathcal{C}'$ and $\mathcal{C}$ are the same.
\end{defi}

\begin{defi}[Reedy category]
Let $\mathcal{C}$ be a small category together with a degree function $\deg \col \mathcal{C} \longrightarrow \mathbf{N}$ defined on the objects and suppose that we have two lluf subcategories $\overrightarrow{\mathcal{C}}$ and $\overleftarrow{\mathcal{C}}$. We say that $(\mathcal{C}, \overrightarrow{\mathcal{C}}, \overleftarrow{\mathcal{C}})$ is a Reedy category if the two following conditions are satisfied.
\begin{enumerate}
\item If $\alpha \col c \rightarrow c'$ is a non-identity map in $\overrightarrow{\mathcal{C}}$ (resp. in $\overleftarrow{\mathcal{C}}$) then $\deg (c) < \deg(c')$ (resp. $\deg (c) > \deg(c')$).
\item Every map $\alpha$ in $\mathcal{C}$ has a unique factorization
\[
\begin{cases}
\alpha & = \overrightarrow{\alpha} \circ \overleftarrow{\alpha}, \\
\overrightarrow{\alpha} & \in \overrightarrow{\mathcal{C}}, \\
\overleftarrow{\alpha} & \in \overleftarrow{\mathcal{C}}. 
\end{cases}
\]
\end{enumerate}
\end{defi}

\begin{exmp}
\begin{enumerate}
\item A discrete category $\mathcal{C}$, that is a category where $\mathcal{C}(x,y) = \lbrace \mathrm{id}_{x} \rbrace$ if and only if $x=y$ and the empty set otherwise, is a Reedy category where all the objects are of degree 0.
\item \label{exmp:Poset_reedy_cat} Let $\mathcal{P}$ be a finite poset. We define every minimal element to be of degree 0 and we define the degree of an element $p \in \mathcal{P}$ to be the length of the longest path of non-identity maps from an element of degree zero to $p$. If we have $p \rightarrow p'$ with $p \neq p'$ then necessarily we have $\deg p < \deg p'$. The poset $\mathcal{P}$ is then endowed with a structure of Reedy category with
\[
\begin{cases}
\overrightarrow{\mathcal{P}}& = \mathcal{P}, \\
\overleftarrow{\mathcal{P}} & = \mathrm{Disc}(\mathcal{P}). 
\end{cases}
\]
where $\mathrm{Disc}(\mathcal{P})$ is the discrete category underlying the poset $\mathcal{P}$, every elements of $\mathrm{Disc}(\mathcal{P})$ are of degree 0.
\end{enumerate}
\end{exmp}

For every Reedy category $\mathcal{C}$ there exist subcategories $\mathcal{C}_{<k}$ of objects of degree strictly inferior to $n$. Let then $F \col \mathcal{C} \rightarrow \mathcal{M}$ a functor which we suppose covariant, consider $c \in \mathcal{C}$ with $\deg c=n$, we have the two objects and maps
\[
L^{c}F \overset{\ell_{c}}{\longrightarrow} F(c) \overset{m_{c}}{\longrightarrow} M^{c}X,
\]
where
\[
L^{c}F := \mathrm{colim}(\partial(\overrightarrow{\mathcal{C}}_{<k}/c) \overset{U_{c}}{\longrightarrow} \mathcal{C} \overset{F}{\longrightarrow} \mathcal{M}),
\] 
\[
M^{c}F := \lim(\partial(c/\overleftarrow{\mathcal{C}}_{<k}) \overset{U_{c}}{\longrightarrow} \mathcal{C} \overset{F}{\longrightarrow} \mathcal{M}).
\] 
with $\partial(\overrightarrow{\mathcal{C}}_{<k}/c)$ and $\partial(c/\overleftarrow{\mathcal{C}}_{<k})$ are the two full subcategories of respectively $\overrightarrow{\mathcal{C}}_{<k}/c$ and $c/\overleftarrow{\mathcal{C}}_{<k}$ where we have removed the identity object $c \rightarrow c$. 

\begin{defi}
The objects $L^{c}F$ and $M^{c}F$ are respectively called the $c$-th latching and $c$-th matching objects. The maps $\ell_{c}$ and $m_{c}$ are then the $c$-th latching and $c$-th matching maps.
\end{defi}

Given a map $F \rightarrow G$ in $\mathsf{Fun}(\mathcal{C}, \mathcal{M})$, we define the $c$-th relative latching map by the following diagram of pushout

\[
\begin{tikzpicture}
\matrix (m)[matrix of math nodes, row sep=3em, column sep=5em, text height=2ex, text depth=0.25ex]
{ L^{c}F  & F(c)  &      \\
  L^{c}G  & \cdot &      \\ 
          &       & G(c) \\};

\path[->]
(m-1-1) edge node[auto] {} (m-1-2);
\path[->]
(m-1-1) edge node[auto] {} (m-2-1);
\path[->]
(m-1-2) edge node[below=1em, left=1em] {$\lrcorner$} (m-2-2);
\path[->]
(m-2-1) edge node[auto] {} (m-2-2);
\path[->]
(m-1-2) edge node[auto] {} (m-3-3);
\path[->]
(m-2-1) edge node[auto] {} (m-3-3);
\path[->,dashed]
(m-2-2) edge node[auto] {} (m-3-3);
\end{tikzpicture}
\]

\noindent and the $c$-th relative matching map by the following diagram of pullback

\[
\begin{tikzpicture}
\matrix (m)[matrix of math nodes, row sep=3em, column sep=5em, text height=2ex, text depth=0.25ex]
{ F(c)    &       &      \\
          & \cdot & M^{c}F     \\ 
          & G(c)  & M^{c}G \\};

\path[->,dashed]
(m-1-1) edge node[auto] {} (m-2-2);
\path[->]
(m-1-1) edge node[auto] {} (m-2-3);
\path[->]
(m-1-1) edge node[auto] {} (m-3-2);
\path[->]
(m-2-2) edge node[auto] {} (m-2-3);
\path[->]
(m-2-2) edge node[above=1em, right=1em] {$\ulcorner$} (m-3-2);
\path[->]
(m-3-2) edge node[auto] {} (m-3-3);
\path[->]
(m-2-3) edge node[auto] {} (m-3-3);
\end{tikzpicture}
\]

\begin{thm}[\cite{Hovey1999}, 5.2.5]
Let $\mathcal{M}$ be a model category et let $\mathcal{C}$ be a Reedy category. Then there is a model category on $\mathsf{Fun}(\mathcal{C}, \mathcal{M})$ such that :
\begin{enumerate}
\item the weak equivalences are defined pointwise,
\item the cofibrations are the maps $F \rightarrow G$ such that each relative latching map
\[
L^{c}G \coprod_{L^{c}F} F(c) \longrightarrow G(c)
\]
is a cofibration in $\mathcal{M}$,
\item the fibrations are the maps $F \rightarrow G$ such that each relative matching map
\[
F(c) \longrightarrow G(c) \times_{M^{c}G} M^{c}F
\]
is a fibration in $\mathcal{M}$.
\end{enumerate}
\end{thm}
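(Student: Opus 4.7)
The plan is to verify the model category axioms (M1)--(M5) for the proposed structure, following the standard induction on Reedy degree. Axiom (M1) on completeness and cocompleteness of $\mathsf{Fun}(\mathcal{C}, \mathcal{M})$ follows from pointwise (co)limits in $\mathcal{M}$. Axioms (M2) (2-out-of-3) and (M3) (closure under retracts) for weak equivalences are immediate from the pointwise definition and the fact that $\mathcal{M}$ satisfies them. Closure of Reedy (trivial) cofibrations and (trivial) fibrations under retracts reduces to the same for $\mathcal{M}$ once one checks that latching and matching constructions preserve retracts, which is formal since $L^{c}$ is a colimit and $M^{c}$ is a limit.

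The heart of the proof is axiom (M5), the factorization axiom. Given $F \to G$, I would construct the intermediate functor $F'$ together with factorizations $F \to F' \to G$ inductively on $\deg c$. At an object $c$ of degree $n$, assuming $F'$ has been defined and the factorization constructed on $\mathcal{C}_{<n}$, the latching and matching objects $L^{c}F'$ and $M^{c}F'$ are determined by this data, as are the relative latching and matching squares. One then factors the map
\[
F(c) \coprod_{L^{c}F} L^{c}F' \longrightarrow G(c) \times_{M^{c}G} M^{c}F'
\]
in $\mathcal{M}$, as a (trivial) cofibration followed by a (trivial) fibration, and sets $F'(c)$ to be the intermediate object. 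The composed universal maps out of $L^{c}F'$ and into $M^{c}F'$ then assemble into functoriality of $F'$ on $\overrightarrow{\mathcal{C}}$ and $\overleftarrow{\mathcal{C}}$; using the unique Reedy factorization of each morphism in $\mathcal{C}$, these extend uniquely to a functor $F' \colon \mathcal{C} \to \mathcal{M}$. By construction, each relative latching (resp.\ matching) map of $F \to F'$ (resp.\ $F' \to G$) is a (trivial) cofibration (resp.\ fibration) in $\mathcal{M}$, so $F \to F'$ is a Reedy (trivial) cofibration and $F' \to G$ is a Reedy (trivial) fibration.

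Axiom (M4), the lifting axiom, is proved in the same inductive fashion. Given a Reedy cofibration $F \to G$ and a Reedy trivial fibration $H \to K$ (or dually a Reedy trivial cofibration against a Reedy fibration), a lift $G \to H$ is built degree by degree: at stage $c$ with $\deg c = n$, the partial lift already defined on $\mathcal{C}_{<n}$ induces a lifting problem in $\mathcal{M}$ between the relative latching map $F(c)\coprod_{L^{c}F}L^{c}G \to G(c)$ and the relative matching map $H(c) \to K(c)\times_{M^{c}K}M^{c}H$. By hypothesis one of these is a cofibration and the other is a trivial fibration (or vice versa), so a lift exists in $\mathcal{M}$; one checks that the resulting maps respect the forward and backward structure maps, again using uniqueness of the Reedy factorization of each arrow.

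The main obstacle in this proof is purely bookkeeping: showing that the pointwise choices made at each Reedy degree assemble into an honest functor $F' \colon \mathcal{C} \to \mathcal{M}$, that is, verifying compatibility of the maps induced out of latching objects and into matching objects with all composites in $\mathcal{C}$. This is exactly where the unique factorization property of a Reedy category is used essentially, and where the degree function ensures the induction is well-founded. Once this compatibility is established, the identification of Reedy (trivial) cofibrations with maps whose relative latching maps are (trivial) cofibrations in $\mathcal{M}$ (and the dual statement for fibrations) is formal, and all the model-category axioms follow. For the case of interest in this paper, namely $\mathcal{C} = \Pos{n}^{op}$ with the Reedy structure of Example \ref{exmp:Poset_reedy_cat} and $\mathcal{M} = \mathsf{CDGA}_{\mathbf{k}}$, the theorem then yields the desired model structure on $\cdga{k}$.
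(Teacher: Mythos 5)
The paper does not prove this theorem at all --- it is quoted verbatim from Hovey \cite{Hovey1999}, Theorem 5.2.5 --- and your sketch is precisely the standard degree-wise induction given in that reference, so your approach agrees with the (cited) source and is correct in outline. The only place you are too quick is in calling the identification of \emph{trivial} Reedy cofibrations with the maps whose relative latching maps are trivial cofibrations ``formal'': this is itself a lemma (Hovey 5.1.5 and the surrounding results), requiring an induction showing that the latching-object functor carries Reedy (trivial) cofibrations to (trivial) cofibrations together with a retract argument for the converse, and it is exactly this lemma that makes the lifting and factorization steps of your induction go through.
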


We now apply this result to our context. We endow $\Pos{n}^{op}$ with the structure of a Reedy category defined in the item \ref{exmp:Poset_reedy_cat} of the last example.

Let $A_{\overline{\bullet}} \col \Pos{n}^{op} \rightarrow \mathsf{CDGA}_{\mathbf{k}}$ be a coperverse cdga and $\overline{p} \in \Pos{n}^{op}$ such that $\deg \overline{p} = k$. We have
\[
L^{\overline{p}}A_{\overline{\bullet}} := \mathrm{colim}(\partial(\mathcal{P}^{op}_{<k}/\overline{p}) \overset{U_{\overline{p}}}{\longrightarrow} \mathcal{P}^{op} \overset{A_{\overline{\bullet}}}{\longrightarrow} \mathcal{M}) = \mathrm{colim}_{\overline{p} < \overline{q}} A_{\overline{q}}
\] 
and
\[
M^{\overline{p}}A_{\overline{\bullet}} := \lim(\partial(\overline{p}/\mathrm{Disc}(\mathcal{P})) \overset{U_{\overline{p}}}{\longrightarrow} \mathcal{P}^{op} \overset{A_{\overline{\bullet}}}{\longrightarrow} \mathcal{M}) = 0.
\] 

Computing the relative latching and matching map we get the following result

\begin{thm}
\label{thm:model_structure}
The category $\cdga{k}$ has a structure of a cofibrantly generated model category which we call the projective model structure. In this model category, the weak equivalences are the quasi-isomorphisms and the fibrations are the surjections.
\end{thm}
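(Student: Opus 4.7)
The plan is to realise the projective model structure as the Reedy structure on the functor category $\mathsf{Fun}(\Pos{n}^{op}, \CDGA{k}) = \cdga{k}$. First I would equip $\Pos{n}^{op}$ with the Reedy data of Example~\ref{exmp:Poset_reedy_cat}: $\overrightarrow{\Pos{n}^{op}} = \Pos{n}^{op}$, $\overleftarrow{\Pos{n}^{op}} = \mathrm{Disc}(\Pos{n}^{op})$, and $\deg(\overline{p}) = 2n-2-p$. The unique factorization axiom is immediate in this linearly ordered setting: any arrow $\overline{q} \to \overline{p}$ of $\Pos{n}^{op}$ factors uniquely as $\mathrm{id}_{\overline{q}} \in \overleftarrow{\Pos{n}^{op}}$ followed by the same arrow viewed in $\overrightarrow{\Pos{n}^{op}}$. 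Applying Hovey's theorem recalled above to this Reedy category, with target $\CDGA{k}$ equipped with the projective structure described before Proposition~\ref{prop:gen_cofib_cdga}, then supplies a model category structure on $\cdga{k}$ whose weak equivalences are, by construction, pointwise quasi-isomorphisms, matching the claim.

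Next I would check that the Reedy fibrations coincide with pointwise surjections. Because $\overleftarrow{\Pos{n}^{op}}$ is discrete, for each $\overline{p}$ of degree $k$ the indexing category $\partial(\overline{p}/\overleftarrow{\Pos{n}^{op}}_{<k})$ is empty: the only candidate arrows in $\overleftarrow{\Pos{n}^{op}}$ are identities, and the identity $\overline{p} \to \overline{p}$ violates the degree condition $\deg(\overline{p}) < k$. Hence $M^{\overline{p}}A_{\overline{\bullet}}$ is the terminal object of $\CDGA{k}$, and the $\overline{p}$-th relative matching map collapses to $F(\overline{p}) \to G(\overline{p})$, so Reedy fibrations are exactly pointwise fibrations of $\CDGA{k}$, i.e., pointwise surjections, as asserted.

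For cofibrant generation I would take
\[
\bigcup_{\overline{p} \in \Pos{n}^{op}} F_{\overline{p}}(I) \quad \text{and} \quad \bigcup_{\overline{p} \in \Pos{n}^{op}} F_{\overline{p}}(J)
\]
as generating cofibrations and generating trivial cofibrations, where $I$ and $J$ are as in Proposition~\ref{prop:gen_cofib_cdga} and $F_{\overline{p}} \dashv Ev_{\overline{p}}$ is the adjunction introduced just before the Reedy category definition. By adjointness, a map $f$ in $\cdga{k}$ has the right lifting property against $F_{\overline{p}}(I)$ (resp.\ $F_{\overline{p}}(J)$) iff $f_{\overline{p}}$ is a trivial fibration (resp.\ fibration) in $\CDGA{k}$, so the corresponding inj-classes equal the pointwise trivial fibrations and pointwise fibrations already identified with the Reedy trivial fibrations and fibrations. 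Smallness of the domains is inherited from $\CDGA{k}$ since each $Ev_{\overline{p}}$ preserves filtered colimits, so the small object argument applies. The main delicate point is reconciling the Reedy description of cofibrations via relative latching maps with the cofibrations generated by the above sets, but since the weak equivalences and fibrations of the two presentations agree and a model structure is determined by any two of its three classes, the cofibrations automatically coincide and no separate latching calculation is needed to conclude.
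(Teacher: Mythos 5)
Your proposal is correct and follows essentially the same route as the paper: the Reedy structure on $\Pos{n}^{op}$ from Example~\ref{exmp:Poset_reedy_cat}, the observation that the matching objects are trivial so that fibrations and weak equivalences are pointwise, and cofibrant generation via the sets $F_{\overline{p}}(I)$ and $F_{\overline{p}}(J)$ transported along the adjunction $F_{\overline{p}} \dashv Ev_{\overline{p}}$. You merely spell out the details that the paper dispatches as ``clear'' or delegates to \cite[Remark 5.1.8]{Hovey1999}, including the adjointness identification of the inj-classes and the point that the cofibrations need no separate latching computation once the other two classes are pinned down.
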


\begin{proof}
The computations of weak equivalences and fibrations are clear.

The fact that $\cdga{k}$ is cofibrantly generated comes from \cite[Remark 5.1.8]{Hovey1999}, the generating cofibrations are the $\lbrace F_{\overline{p}}(i) \rbrace_{i \in I, \overline{p} \in \Pos{n}^{op}}$ and the generating acyclic cofibrations are the $\lbrace F_{\overline{p}}(j) \rbrace_{i \in J, \overline{p} \in \Pos{n}^{op}}$ where $I$ and $J$ are the sets defined in the proposition \ref{prop:gen_cofib_cdga}.
\end{proof}

For clarity, we give the following definition as a result of the previous theorem.

\begin{defi}
Let $f_{\overline{\bullet}} \col A_{\overline{\bullet}} \rightarrow B_{\overline{\bullet}}$ be a morphism of coperverse algebras. The morphism $f_{\overline{\bullet}}$ is
\begin{enumerate}
\item A quasi-isomorphism if, for every perversity $\overline{p} \in \Pos{n}^{op}$, the induced map $H^{\ast}_{\overline{p}}(A) \rightarrow H^{\ast}_{\overline{p}}(B)$ is an isomorphism.
\item A fibration if, for every perversity $\overline{p} \in \Pos{n}^{op}$, the induced map $f_{\overline{p}} \col A_{\overline{p}} \rightarrow B_{\overline{p}}$ is a degreewise surjection.
\end{enumerate}
\end{defi}

We denote by $\Ho{\cdga{\mathbf{k}}}$ the homotopy category associated to the model category structure on $\cdga{\mathbf{k}}$. That is the category defined by formally inverting quasi-isomorphisms.

\begin{remark}
There are many ways to put a model structures on $\cdga{k}$. Indeed the category $\mathsf{CDGA}_{\mathbf{k}}$ also has an injective model structure where the weak equivalences are the quasi-isomorphisms and the cofibrations are the injections and we could have choose this model structure to do the computations. 

On the other hand we could have chose the projective or injective model structure on $\cdga{k}$ coming from $\mathsf{CDGA}_{\mathbf{k}}$ rather than doing computations using Reedy categories. But since $\mathsf{CDGA}_{\mathbf{k}}$ is a combinatorial model category all the ways mentioned above are guaranteed to be Quillen equivalent to the projective model structure on $\cdga{k}$. 

By the way, all these model structures share the same weak equivalences.
\end{remark}

\section{Coperverse rational models}
\label{section:coperv_algebras}
\subsection{Coperverse cdga's associated with a morphism of cdga's}

The tools in this section are modified versions of the one appearing the work of Chataur and Cirici \cite{Chataur} on the interactions between intersection cohomology and mixed Hodge structures. 

Let $(A,d) \in \mathsf{CDGA}_{\mathbf{k}}$. We denote by $\mathbf{k}(t, dt) := \wedge (t,dt)$ the free cdga generated by $t$ and $dt$ with $\deg t =0$, $\deg dt =1$ and $d(t)=dt$.

\begin{defi}
\label{def:At,dt}
We denote by $A(t,dt) := A \otimes_{\mathbf{k}} \mathbf{k}(t, dt)$. For $\lambda \in \mathbf{k}$ we also define the evaluation map
\[
\delta_{\lambda} \col A(t,dt) \longrightarrow A
\]
by $\delta_{\lambda}(t) = \lambda$ and $\delta_{\lambda}(dt)=0$.
\end{defi}

For all $r \geq 0$, we have the following short exact sequence
\[
0 \longrightarrow \ker d^{r} \longrightarrow A^{r} \longrightarrow \mathrm{Coim\,}d^{r} \longrightarrow 0
\]
where $\mathrm{Coim\,}d^{r} := A^{r} / \ker d^{r}$. Denote by $s_{r} \col \mathrm{Coim\,}d^{r} \rightarrow A^{r}$ a choice of section. For all $r \geq 0$, we denote by $\coim{r} := \im{s_{r}}$, the differential $d^{r}$ induces the isomorphism $\coim{r} \rightarrow \im{d^{r}}$.

\begin{defi}
The unital $\overline{p}$-cotruncation of $A(t,dt)$ is defined by
\[
\dgaucotr{p}{A(t,dt)} := A^{0} \oplus \dgacotr{p}{A(t,dt)}.
\]
where $\dgacotr{p}{A(t,dt)}$ is defined by 
\[
\dgacotr{p}{A(t,dt)}^{r} :=
\begin{cases}
A^{r}\otimes \mathbf{k}[t]t \oplus A^{r-1}\otimes \mathbf{k}[t]dt & r < p \\
A^{p-1}\otimes \mathbf{k}[t]dt \oplus A^{p} \otimes \mathbf{k}[t]t \oplus \coim{p} & r=p \\
A^{r-1}\otimes \mathbf{k}[t]dt \oplus A^{r}\otimes \mathbf{k}[t] & r> p
\end{cases}
\]
\end{defi}

\begin{lem}
\label{lem:coperv_alg}
$\dgaucotr{\bullet}{A(t,dt)}$ is a coperverse cdga.
\end{lem}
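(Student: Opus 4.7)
The plan is to exhibit $\dgaucotr{p}{A(t,dt)}$ as a sub-cdga of $A(t,dt)$ for every perversity $\overline{p}$, and to take the poset maps $\pomap{p}{q}$ (for $\overline{p} \leq \overline{q}$ in $\Pos{n}^{op}$, i.e.\ $p \geq q$ as integers) to be the corresponding inclusions. Once this is set up, compatibility with the differential and the product is automatic and functoriality is trivial, so the whole statement reduces to a sequence of closure checks.

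First I would check that $\dgacotr{p}{A(t,dt)}$ is stable under $d$. Using $d(a \otimes t^k) = da \otimes t^k + (-1)^{|a|} k\, a \otimes t^{k-1} dt$ and $d(a \otimes t^k dt) = da \otimes t^k dt$, the only non-routine cases are the boundary transitions $r = p-1 \to p$ and $r = p \to p+1$. At $r = p-1 \to p$, the factor $\mathbf{k}[t]\, t$ forces $k \geq 1$, so $a \otimes t^{k-1} dt$ remains in $A^{p-1} \otimes \mathbf{k}[t]\, dt$, and $da \otimes t^k$ lies in $A^p \otimes \mathbf{k}[t]\, t$. At $r = p \to p+1$ the only new feature is the summand $\coim{p}$; by construction $d(\coim{p}) \subseteq \im{d^p} \subseteq A^{p+1} \otimes 1 \subset A^{p+1} \otimes \mathbf{k}[t]$, which is part of $\dgacotr{p}{A(t,dt)}^{p+1}$. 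The unit contributed by the $A^0$ summand in the unital version is annihilated by $d$, so no new issues arise.

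Closure under the product is again a case analysis. Whenever the total degree $r + s$ of a product is $> p$, the target cotruncation is the full polynomial piece $A^{r+s} \otimes \mathbf{k}[t] \oplus A^{r+s-1} \otimes \mathbf{k}[t]\, dt$ and there is nothing to check; whenever both factors sit in degree $< p$, the factor of $t$ present in at least one of them survives. The only interaction with the exceptional piece $\coim{p}$ happens through products $\coim{p} \cdot (A^r \otimes t\, \mathbf{k}[t]) \subset A^{p+r} \otimes t\, \mathbf{k}[t]$ and $\coim{p} \cdot \coim{p} \subset A^{2p} \otimes 1 \subset A^{2p} \otimes \mathbf{k}[t]$, both lying in acceptable pieces of the cotruncation. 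Multiplication by the unit is by definition the identity.

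Finally, for $\overline{p} \leq \overline{q}$ the inclusion $\dgaucotr{p}{A(t,dt)} \subseteq \dgaucotr{q}{A(t,dt)}$ is verified degreewise: the two sides agree for $r < q$ and for $r > p$; at $r = q < p$ the target simply gains the extra summand $\coim{q}$; for $q < r < p$ the inclusion reduces to $A^r \otimes t\, \mathbf{k}[t] \subset A^r \otimes \mathbf{k}[t]$; and at $r = p > q$ one has $\coim{p} \subset A^p \subset A^p \otimes \mathbf{k}[t]$. The relation $\pomap{q}{r} \circ \pomap{p}{q} = \pomap{p}{r}$ is then immediate. The only genuinely delicate point is degree $r = p$, where the section piece $\coim{p}$ must be tracked through $d$, $\mu$, and the poset maps; everything else is bookkeeping.
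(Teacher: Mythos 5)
Your proof is correct and follows essentially the same route as the paper's: both realize each $\dgaucotr{p}{A(t,dt)}$ as a sub-cdga of $A(t,dt)$ with the poset maps $\pomap{p}{q}$ given by inclusions (identities away from the cut-off degrees, injections there via $A^{r}\otimes\mathbf{k}[t]=A^{r}\oplus A^{r}\otimes\mathbf{k}[t]t$ and $\coim{p}\subset A^{p}$). You simply spell out the closure of the cotruncation under $d$ and $\mu$ at the boundary degrees, which the paper dismisses as clear by construction.
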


\begin{proof}
Consider first $\dgacotr{p}{A(t,dt)}$.

The compatibility of $\dgacotr{\bullet}{A(t,dt)}$ with the differential $d(\dgacotr{p}{A(t,dt)}) \subset \dgacotr{p}{A(t,dt)}$ and product $\dgacotr{p}{A(t,dt)} \times \dgacotr{p}{A(t,dt)} \rightarrow \dgacotr{p}{A(t,dt)}$ is clear by construction. We detail the compatibility with the poset maps. By unicity of the maps $\pomap{p}{q}$, every  $\pomap{p}{q}$ is a composition of poset maps $\pomap{k+1}{k}$ so we only detail these ones. We have 

\[
\dgacotr{k+1}{A(t,dt)}^{r} :=
\begin{cases}
A^{r}  \otimes \mathbf{k}[t]t  \oplus A^{r-1} \otimes \mathbf{k}[t]dt & r < k+1 \\
A^{k}  \otimes \mathbf{k}[t]dt \oplus A^{k+1} \otimes \mathbf{k}[t]t \oplus \coim{k+1} & r=k+1 \\
A^{r-1}\otimes \mathbf{k}[t]dt \oplus A^{r}   \otimes \mathbf{k}[t] & r> k+1
\end{cases}
\]

and 

\[
\dgacotr{k}{A(t,dt)}^{r} :=
\begin{cases}
A^{r}  \otimes \mathbf{k}[t]t  \oplus A^{r-1} \otimes \mathbf{k}[t]dt & r < k \\
A^{k-1}\otimes \mathbf{k}[t]dt \oplus A^{k}   \otimes \mathbf{k}[t]t \oplus \coim{k} & r=k \\
A^{r-1}\otimes \mathbf{k}[t]dt \oplus A^{r}   \otimes \mathbf{k}[t] & r> k.
\end{cases}
\]

For $r \leq k$ or $r >k+1$, $\pomap{k+1}{k}$ is the identity map. For $r=k+1$, since  $A^{k+1}\otimes \mathbf{k}[t] = A^{k+1} \oplus A^{k+1}\otimes \mathbf{k}[t]t$, $\pomap{k+1}{k}$ is an injection. 

Now for $\dgaucotr{p}{A(t,dt)} := A^{0} \oplus \dgacotr{p}{A(t,dt)}$ the compatibility with the differential and the poset maps is clear by the same arguments than above. The product $\dgaucotr{p}{A(t,dt)} \times \dgaucotr{p}{A(t,dt)} \rightarrow \dgaucotr{p}{A(t,dt)}$ is also clear by construction.
\end{proof}

Let now $f \col A \longrightarrow B$ be a morphism of cdga's. Given a perversity $\overline{p} \in \Pos{n}^{op}$, we consider the following pull-back diagram in the category $\CDGA{k}$.

\[
\begin{tikzpicture}
\matrix (m)[matrix of math nodes, row sep=3em, column sep=5em, text height=2ex, text depth=0.25ex]
{ \pb{p}{f}  &  \dgaucotr{p}{B(t,dt)}  \\
  A  &  B \\};

\path[->]
(m-1-1) edge node[auto] {} (m-1-2);
\path[->]
(m-2-1) edge node[auto,swap] {$f$} (m-2-2);
\path[->]
(m-1-1) edge node[above=1em, right=1em] {$\ulcorner$} (m-2-1);
\path[->]
(m-1-2) edge node[auto] {$\delta_{1}$} (m-2-2);
\end{tikzpicture}
\]

With the product, the differential defined component-wise and the compatibility with poset maps.

\begin{propo}
The pull-back $\pb{\bullet}{f}$ is a  coperverse cdga.
\end{propo}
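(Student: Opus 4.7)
The plan is to verify the three ingredients of a coperverse cdga for the family $\lbrace \pb{p}{f} \rbrace_{\overline{p} \in \Pos{n}^{op}}$: that each $\pb{p}{f}$ is itself a cdga, that the poset maps $\pomap{p}{q} \col \pb{p}{f} \to \pb{q}{f}$ exist for every $\overline{p} \leq \overline{q}$, and that these data are compatible with differentials, products, and composition of poset maps. Since pullbacks exist in $\CDGA{k}$ and are computed componentwise on underlying graded vector spaces, each $\pb{p}{f}$ is automatically a cdga with differential and product defined coordinatewise; concretely an element is a pair $(a,\beta) \in A \times \dgaucotr{p}{B(t,dt)}$ with $f(a) = \delta_1(\beta)$.

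For the poset maps I would invoke the universal property of the pullback defining $\pb{q}{f}$. By Lemma \ref{lem:coperv_alg} we have a cdga morphism $\pomap{p}{q} \col \dgaucotr{p}{B(t,dt)} \to \dgaucotr{q}{B(t,dt)}$. Composing this with the projection $\pb{p}{f} \to \dgaucotr{p}{B(t,dt)}$, and keeping the projection $\pb{p}{f} \to A$ on the other factor, the universal property yields the desired $\pomap{p}{q} \col \pb{p}{f} \to \pb{q}{f}$ provided one checks the compatibility
\[
\delta_{1} \circ \pomap{p}{q} = \delta_{1} \col \dgaucotr{p}{B(t,dt)} \longrightarrow B.
\]
This is the crux of the argument, and where a bit of care is required. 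Inspecting the construction of $\pomap{p}{q}$ recalled in the proof of Lemma \ref{lem:coperv_alg}, one sees that $\pomap{p}{q}$ is a literal inclusion: it is the identity outside degrees $p$ and $p+1$, and in those degrees the $\coim{p}$ summand embeds into $B^{p}$ via the chosen section while the $\mathbf{k}[t]t$ and $\mathbf{k}[t]dt$ parts remain unchanged. Since $\delta_{1}$ is defined by $t \mapsto 1$ and $dt \mapsto 0$, it takes the same value on an element whether evaluated in $\dgaucotr{p}{B(t,dt)}$ or in $\dgaucotr{q}{B(t,dt)}$, which is exactly the identity above.

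Finally I would check the remaining coperverse axioms. The induced map $\pb{p}{f} \to \pb{q}{f}$ is a cdga morphism because the universal property of the pullback preserves the differential and product and because $\pomap{p}{q}$ is a cdga morphism on $\dgaucotr{p}{B(t,dt)}$ by Lemma \ref{lem:coperv_alg}, while on $A$ it is the identity. Functoriality $\pomap{q}{r} \circ \pomap{p}{q} = \pomap{p}{r}$ for $\overline{p} \leq \overline{q} \leq \overline{r}$ follows from uniqueness in the universal property together with the corresponding identity on $\dgaucotr{\bullet}{B(t,dt)}$, which again is supplied by Lemma \ref{lem:coperv_alg}. The main potential obstacle is therefore solely the compatibility $\delta_{1} \circ \pomap{p}{q} = \delta_{1}$; once this is unwound, everything else is purely formal bookkeeping with universal properties.
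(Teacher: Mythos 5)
Your proof is correct and follows essentially the same route the paper intends: the paper states this proposition without a written proof, merely noting beforehand that the product, differential and poset compatibility are defined componentwise, and your verification (in particular the observation that $\delta_{1}\circ\pomap{p}{q}=\delta_{1}$ because the poset maps are literal inclusions of subspaces of $B(t,dt)$ on which $\delta_{1}$ is just evaluation at $t=1$) is exactly the bookkeeping being left implicit.
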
 

\begin{defi}
\label{def:pb_coperv_cdga}
$\pb{\bullet}{f}$ is the coperverse cdga associated to the morphism of cdga's $f \col A \longrightarrow B$.
\end{defi}

\begin{defi}\label{def:sharp_pcdga}
Let $(A_{\overline{\bullet}}, \mu_{\overline{\bullet},\overline{\bullet}})$ be a coperverse cdga and $r \in \mathbf{Z}$. We say that $(A_{\overline{\bullet}}, \mu_{\overline{\bullet},\overline{\bullet}})$  is a $r$-sharp coperverse cdga if the product satisfies the two following conditions
\begin{enumerate}
\item \textbf{Unity} For $A_{\overline{p}}^{i} \times A_{\overline{0}}^{j} \rightarrow  A_{\overline{0}}^{i+j}$ the product lifts to
\[
\begin{tikzpicture}
\matrix (m)[matrix of math nodes, row sep=3em, column sep=5em, text height=2ex, text depth=0.25ex]
{                                                  &  A_{\overline{p}}^{i+j}  \\
  A_{\overline{p}}^{i} \times A_{\overline{0}}^{j} &  A_{\overline{0}}^{i+j}  \\};

\path[->]
(m-2-1) edge node[auto] {$\mu_{\overline{p},\overline{0}}$} (m-2-2);
\path[->,dashed]
(m-2-1) edge node[auto] {} (m-1-2);
\path[->]
(m-1-2) edge node[auto] {$\pomap{p}{0}$} (m-2-2);
\end{tikzpicture}
\]
\item \textbf{Factorization} For $\overline{p}, \overline{q} \neq \overline{0}$ and $i,j \neq 0$ the product lifts to
\[
\begin{tikzpicture}
\matrix (m)[matrix of math nodes, row sep=3em, column sep=5em, text height=2ex, text depth=0.25ex]
{                                                  &  A_{\overline{p+q+r}}^{i+j}  \\
  A_{\overline{p}}^{i} \times A_{\overline{q}}^{j} &  A_{\overline{q}}^{i+j}  \\};

\path[->]
(m-2-1) edge node[auto] {$\mu_{\overline{p},\overline{q}}$} (m-2-2);
\path[->,dashed]
(m-2-1) edge node[auto] {} (m-1-2);
\path[->]
(m-1-2) edge node[auto] {$\pomap{p+q+r}{q}$} (m-2-2);
\end{tikzpicture}
\]
\end{enumerate}
We assume that this lift satisfies all the properties of the product $\mu$. That is Leibniz rule with respect to the differential, graded commutativity and compatibility with poset maps and morphisms of cdga's.
\end{defi}

\begin{lem}
$\dgaucotr{\bullet}{A(t,dt)}$ is a $(-1)$-sharp coperverse cdga.
\end{lem}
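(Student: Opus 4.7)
The coperverse cdga structure on $\dgaucotr{\bullet}{A(t,dt)}$ is already granted by Lemma \ref{lem:coperv_alg}, so the only thing to verify is the two lifting conditions of Definition \ref{def:sharp_pcdga} with $r=-1$. The plan is a direct case analysis exploiting the explicit description of $\dgaucotr{p}{A(t,dt)}^s$ as a subspace of $A(t,dt)^s$ characterized by its constant-in-$t$ non-$dt$ coefficient: that coefficient must vanish when $s<p$, lie in $\coim{p}$ when $s=p$, and is unrestricted when $s>p$, with a unital copy of $A^0$ added in degree zero. The key algebraic fact driving both verifications is that the constant-in-$t$ non-$dt$ coefficient of a product $xy$ in $A(t,dt)$ is the product $x_0 y_0$ of the corresponding coefficients of the factors. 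Since the poset maps are inclusions, the lifts are unique whenever they exist, and exist precisely when the product lands in the smaller subspace.

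For the factorization condition, take $\overline{p},\overline{q} \neq \overline{0}$, $i,j>0$, $x\in \dgaucotr{p}{A(t,dt)}^i$, $y\in \dgaucotr{q}{A(t,dt)}^j$, and compare $i+j$ with $p+q-1$. If $i+j>p+q-1$ the target space $\dgaucotr{p+q-1}{A(t,dt)}^{i+j}$ coincides with $A(t,dt)^{i+j}$ and there is nothing to check. If $i+j\le p+q-1$, one cannot have simultaneously $i\ge p$ and $j\ge q$ (otherwise $i+j\ge p+q$), so without loss of generality $i<p$; together with $i>0$ this forces $x_0=0$, hence $(xy)_0=0$, and $xy$ lies in the target subspace (in the boundary sub-case $i+j=p+q-1$ the zero constant trivially sits in $\coim{p+q-1}$). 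The fact that the induced lift inherits the Leibniz rule, graded commutativity, and compatibility with poset maps and cdga morphisms is immediate from the corresponding properties of the ambient product in $A(t,dt)$.

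The unity condition is handled by the same method with $\overline{q}=\overline{0}$: the case $i+j>p$ is immediate, and when $i+j\le p$ one case-splits on $i$ and $j$ relative to $p$, reducing in the generic sub-case to the same $t$-degree bookkeeping used for factorization. The expected main obstacle is precisely the degree-zero boundary, where the unital summand $A^0$ appearing in every $\dgaucotr{p}{A(t,dt)}^0$, and the additional $\coim{0}$-summand appearing in $\dgaucotr{0}{A(t,dt)}^0$, force a careful analysis of how products involving these summands are controlled by the perverse truncation; the required closure is already embedded in the construction of $\dgaucotr{\bullet}{A(t,dt)}$ in Lemma \ref{lem:coperv_alg}, so the verification comes down to tracing through that construction carefully at $s=0$.
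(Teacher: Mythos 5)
The paper states this lemma without proof, so there is no argument of the author's to compare yours against; your write-up has to stand on its own. The factorization half does stand: the observation that the only constraint cutting $\dgacotr{p}{A(t,dt)}^{s}$ out of $A(t,dt)^{s}$ bears on the constant-in-$t$, $dt$-free coefficient, that this coefficient is multiplicative, and that $i+j\le p+q-1$ with $i,j\ge 1$ forces (after swapping factors) $i<p$ and hence a vanishing coefficient for the first factor, is exactly the right computation, and the hypotheses $\overline{p},\overline{q}\neq\overline{0}$ and $i,j\neq 0$ are precisely what make it work.

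The gap is the unity axiom, which you do not actually prove: you identify the degree-zero boundary as the ``expected main obstacle'' and then assert that the required closure ``is already embedded in the construction'' of Lemma \ref{lem:coperv_alg}. That appeal is a non sequitur: Lemma \ref{lem:coperv_alg} only gives closure of the internal product $\dgaucotr{p}{A(t,dt)}\times\dgaucotr{p}{A(t,dt)}\to\dgaucotr{p}{A(t,dt)}$ at a fixed perversity, whereas unity (the poset maps being inclusions) demands the much stronger ideal-type containment $\dgaucotr{p}{A(t,dt)}\cdot\dgaucotr{0}{A(t,dt)}\subseteq\dgaucotr{p}{A(t,dt)}$. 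Tracing through the construction at the boundary, as you propose, does not rescue it: for $0<j<p$ the product of $1\otimes 1\in A^{0}\subseteq\dgaucotr{p}{A(t,dt)}^{0}$ with $a\otimes 1\in\dgaucotr{0}{A(t,dt)}^{j}$ (with $a\in A^{j}$ nonzero) equals $a\otimes 1$, whose constant coefficient is nonzero, so it does not lie in $\dgaucotr{p}{A(t,dt)}^{j}$; likewise for $i=p$, $j=0$ one would need $\coim{p}\cdot A^{0}\subseteq\coim{p}$, which fails for an arbitrary choice of section $s_{p}$. So the unity axiom, read literally, fails for the unital cotruncation; a correct treatment must either restrict it (e.g.\ to bidegrees $i,j\ge 1$, where your factorization argument applies verbatim, or to the action of the unit $\mathbf{k}\subseteq A^{0}$, which is all the subsequent remark in the paper actually uses) or impose conditions on $A^{0}$ and on the sections $s_{r}$. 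Your proposal neither proves the axiom nor isolates this as a defect of the statement, so the proof is incomplete exactly where the difficulty sits.
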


\begin{cor}\label{cor:J(f)_-1_pcdga}
Let $f \col A \longrightarrow B$ be a morphism of cdga's, then $\pb{\bullet}{f}$ is a $(-1)$-sharp coperverse cdga
\end{cor}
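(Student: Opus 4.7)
The plan is to reduce the $(-1)$-sharpness of $\pb{\bullet}{f}$ to the $(-1)$-sharpness of $\dgaucotr{\bullet}{B(t,dt)}$ already established in the preceding lemma, using the pullback description of $\pb{\bullet}{f}$. Concretely, an element of $\pb{p}{f}$ is a pair $(a, \beta) \in A \times \dgaucotr{p}{B(t,dt)}$ with $f(a) = \delta_{1}(\beta)$, and the product, differential, and poset maps are all defined componentwise; moreover, from the proof of lemma \ref{lem:coperv_alg}, the poset maps on the $\dgaucotr{\bullet}{B(t,dt)}$-factor are inclusions and clearly commute with $\delta_{1}$.

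First I would check the unity condition. Take $(a, \beta) \in \pb{p}{f}^{i}$ and $(a', \beta') \in \pb{0}{f}^{j}$. By the unity condition for the sharp structure on $\dgaucotr{\bullet}{B(t,dt)}$, the product $\beta \beta' \in \dgaucotr{0}{B(t,dt)}^{i+j}$ admits a lift $\widetilde{\beta\beta'} \in \dgaucotr{p}{B(t,dt)}^{i+j}$ with $\pomap{p}{0}(\widetilde{\beta\beta'}) = \beta\beta'$. The pair $(aa', \widetilde{\beta\beta'})$ is the candidate lift; the only thing to verify is that it sits in the pullback, i.e., $f(aa') = \delta_{1}(\widetilde{\beta\beta'})$. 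Since $\delta_{1}$ commutes with the poset map and is a morphism of cdga's,
\[
\delta_{1}(\widetilde{\beta\beta'}) = \delta_{1}(\beta\beta') = \delta_{1}(\beta)\delta_{1}(\beta') = f(a)f(a') = f(aa'),
\]
so $(aa', \widetilde{\beta\beta'}) \in \pb{p}{f}^{i+j}$ gives the required lift of $\mu_{\overline{p},\overline{0}}$.

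The factorization condition is obtained by exactly the same argument: given $\overline{p}, \overline{q} \neq \overline{0}$, $i, j \neq 0$, and $(a, \beta) \in \pb{p}{f}^{i}$, $(a', \beta') \in \pb{q}{f}^{j}$, the $(-1)$-sharp factorization of $\dgaucotr{\bullet}{B(t,dt)}$ produces a lift $\widetilde{\beta\beta'} \in \dgaucotr{p+q-1}{B(t,dt)}^{i+j}$, and the same $\delta_{1}$-computation shows that $(aa', \widetilde{\beta\beta'}) \in \pb{p+q-1}{f}^{i+j}$. The remaining requirements of definition \ref{def:sharp_pcdga} (Leibniz with respect to $d$, graded commutativity, and compatibility with poset maps and with morphisms of cdga's) are inherited componentwise: the $A$-factor gets them from the cdga structure of $A$, and the $\dgaucotr{\bullet}{B(t,dt)}$-factor gets them from the $(-1)$-sharp structure on $\dgaucotr{\bullet}{B(t,dt)}$. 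There is no real obstacle; the only point requiring care is the consistency of the $\delta_{1}$-equation under the lift, which is automatic from the fact that the poset maps on $\dgaucotr{\bullet}{B(t,dt)}$ are inclusions over which $\delta_{1}$ restricts compatibly.
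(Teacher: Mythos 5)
Your argument is correct and is exactly the deduction the paper intends: the corollary is stated without proof immediately after the lemma that $\dgaucotr{\bullet}{B(t,dt)}$ is $(-1)$-sharp, and the point is precisely that the lift transfers componentwise to the pullback, with the only check being the $\delta_{1}$-compatibility you carry out (which is automatic since the poset maps are inclusions and the lift is therefore unique). Nothing is missing.
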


\begin{remark}
\begin{enumerate}
\item The first condition means that the final cdga $A_{\overline{0}}$, since $\overline{0}$ is the maximal element of $\Pos{n}^{op}$, plays the role of the unit for the family $(A_{\overline{p}})_{\overline{p} \in \Pos{n}^{op}}$ and in particular for the unit $\eta_{\overline{0}} \col \mathbf{k} \rightarrow A_{\overline{0}}^{0}$ we have $A_{\overline{p}}^{i} \times A_{\overline{0}}^{0} \rightarrow A_{\overline{p}}^{i}$ for every $\overline{p}$ and every $i \geq 0$.

\item  coperverse cdga's are meant to model the rational cohomology of intersection spaces $\HI{p}{k}{X}$. Since the $\I{p}{X}$ are topological spaces their cohomology bear an inner cup-product which is reflected in the definition of the coperverse cdga's. The lift is here to show the interactions between the different $\HI{p}{k}{X}$.
\end{enumerate}
\end{remark}

\subsection{Coperverse rational model of intersection spaces}

Let $X \in \super$ of complex dimension $n$, we denote by $\Sigma$ the singular locus of $X$. 

Let $T$ be a closed algebraic neighbourhood of the singular locus in $X$ such that the inclusion $\Sigma \subset T$ is a homotopy equivalence. Such a neighbourhood exists and is constructed with "rug functions", see \cite[p.144]{Peters2007} or \cite{Durfee1983}.

The link $L:=L(\Sigma,X)$ of $\Sigma$ in $X$ is defined by $L := \partial T \simeq T^{\ast} := T - \Sigma$. The inclusion $i \col L \hookrightarrow X_{reg}$ of the link into the regular part of $X$ induces a morphism of cdga's over $\mathbf{Q}$
\[
i^{\ast} \col \Apl{X_{reg}} \longrightarrow \Apl{L}.
\]

Let $\overline{p} \in \Pos{n}^{op}$ be a perversity, the rational model of the intersection space $\I{p}{X}$ is given by $\AI{p}{X} := \pb{p}{i^{\ast}}$, which is the following pull-back diagram, see \cite{Klimczak2015}.

\[
\begin{tikzpicture}
\matrix (m)[matrix of math nodes, row sep=3em, column sep=5em, text height=2ex, text depth=0.25ex]
{ \pb{p}{i^{\ast}}  &  \dgaucotr{p}{\Apl{L}(t,dt)}  \\
  \Apl{X_{reg}}  &  \Apl{L} \\};

\path[->]
(m-1-1) edge node[auto] {} (m-1-2);
\path[->]
(m-2-1) edge node[auto] {$i^{\ast}$} (m-2-2);
\path[->]
(m-1-1) edge node[above=1em, right=1em] {$\ulcorner$} (m-2-1);
\path[->]
(m-1-2) edge node[auto] {$\delta_{1}$} (m-2-2);
\end{tikzpicture}
\]

\begin{defi}
The coperverse cdga $\AI{\bullet}{X}$ is called the coperverse rational model of the intersection spaces $\I{\bullet}{X}$.
\end{defi}

If $A_{\overline{\bullet}}$ is a coperverse cdga, its cohomology is also a coperverse cdga. We then have the following proposition.
\begin{propo}
$\HI{\bullet}{\ast}{X}$ is a coperverse cdga.
\end{propo}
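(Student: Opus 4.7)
The plan is to observe that taking cohomology defines a functor $H \col \CDGA{k} \rightarrow \CDGA{k}$, sending $(A,d)$ to $H(A,d)$ regarded as a cdga with zero differential, and that the coperverse structure is preserved under postcomposition with it. Since a morphism of cdga's $\varphi \col A \to B$ commutes with differentials and is multiplicative, it induces a well-defined morphism $H(\varphi) \col H(A,d) \to H(B,d)$ of cdga's with trivial differential; functoriality $H(\psi \circ \varphi) = H(\psi) \circ H(\varphi)$ and $H(\mathrm{id}) = \mathrm{id}$ is standard.

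Given the coperverse cdga $\AI{\bullet}{X} \col \Pos{n}^{op} \to \CDGA{k}$ from the previous definition, the composition $H \circ \AI{\bullet}{X}$ is then again a functor $\Pos{n}^{op} \to \CDGA{k}$, hence by Definition \ref{def:coperverse_cdga} a coperverse cdga. Unpacking: for each perversity $\overline{p}$, the product on $\AI{p}{X}$ descends to a graded commutative product on $\HI{p}{\ast}{X}$ because the Leibniz rule forces $\mu$ to preserve cocycles and to induce a well-defined map modulo coboundaries; the poset maps $\pomap{p}{q}$ are morphisms of cdga's by assumption, so they induce cdga morphisms $\HI{p}{\ast}{X} \to \HI{q}{\ast}{X}$; and the two commutative squares of Definition \ref{def:coperverse_cdga} descend from the chain level to cohomology by functoriality of $H$.

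There is essentially no obstacle: every axiom of a coperverse cdga is either a purely functorial consequence of the chain-level statement or the descent of a morphism of cdga's to cohomology. The only step that requires a line of verification is that the product $\mu$ on $\AI{\bullet}{X}$ restricts to cocycles and vanishes on the ideal of coboundaries, which is an immediate consequence of the graded Leibniz rule that is already part of Definition \ref{def:coperverse_cdga}.
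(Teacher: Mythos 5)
Your proposal is correct and is essentially the paper's own argument: the paper simply remarks that the cohomology of a coperverse cdga is again a coperverse cdga and applies this to $\AI{\bullet}{X}$ via the identification $H^{\ast}(\AI{p}{X}) \cong \HI{p}{\ast}{X}$, which is exactly the functorial descent you spell out.
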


We have an isomorphism of coperverse cdga $H^{\ast}(\AI{p}{X}) \cong \HI{\bullet}{\ast}{X}$. 

This then defines a functor 
\[
AI_{\overline{\bullet}} \col \super \longrightarrow \Ho{\cdga{k}}.
\]

If we only consider the coperverse rational model of $X \in \super$, we then have that $AI_{\overline{\bullet}}(X)$ is a $(-1)$-sharp coperverse cdga by corollary \ref{cor:J(f)_-1_pcdga}. But if we only want to consider the cohomology coperverse algebra $\HI{\bullet}{\ast}{X}$, we can have an even sharper result.

\begin{propo}
Let $X \in \super$ with only isolated singularities. Then $(\HI{\bullet}{\ast}{X},0)$ is a 1-sharp coperverse cdga. That is we have
\[
\begin{cases}
\HI{0}{i}{X} \otimes \redHI{p}{j}{X} \longrightarrow \redHI{p}{i+j}{X} & \\
\redHI{p}{i}{X} \otimes \redHI{q}{j}{X} \longrightarrow \redHI{p+q+1}{i+j}{X} & p+q+1 \leq 2n-2.
\end{cases}
\]
\end{propo}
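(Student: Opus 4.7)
The plan is to verify the two lifting conditions in Definition \ref{def:sharp_pcdga} with $r = 1$ by a case analysis on $i$ and $j$ relative to the perversities, using the explicit description of $\HI{p}{r}{X}$ together with the Mayer--Vietoris sequence of the decomposition $X = X_{reg} \cup T$, where $T$ is a regular neighbourhood of $\Sigma$ with $T \cap X_{reg} \simeq L$ and $T$ homotopy equivalent to the discrete set $\Sigma$. Since $H^{r}(T) = 0$ for $r \geq 1$, this sequence reduces to
\[
\cdots \longrightarrow H^{r-1}(L) \longrightarrow H^{r}(X) \xrightarrow{\iota^{r}} H^{r}(X_{reg}) \xrightarrow{i^{*}} H^{r}(L) \longrightarrow \cdots,
\]
whose key consequence is $i^{*} \circ \iota^{r} = 0$ in positive degrees, so that $\ker i^{*} = \im \iota^{r}$.

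For the unity condition, I would compute $\mu_{\overline{p},\overline{0}}(\alpha,\beta) = \pomap{p}{0}(\alpha) \cdot \beta$ in $\HI{0}{i+j}{X} = H^{i+j}(X_{reg})$ and check it lies in the image of $\pomap{p}{0} \col \HI{p}{i+j}{X} \to H^{i+j}(X_{reg})$. When $i+j > p+1$ this map is the identity, and when $i+j = p+1$ it is surjective (from the short exact sequence $0 \to H^{p}(L)/R^{p} \to \HI{p}{p+1}{X} \to H^{p+1}(X_{reg}) \to 0$ obtained from the pullback defining $\pb{p}{i^{*}}$, combined with the vanishing $H^{r}(\dgaucotr{p}{\Apl{L}(t,dt)}) = 0$ for $0 < r \leq p$); in both cases the lift is automatic. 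When $i+j \leq p$ one has $i \leq p$, so $\pomap{p}{0}(\alpha) = \iota^{i}(\alpha_{X})$ for some $\alpha_{X} \in H^{i}(X)$, and $i^{*}(\iota^{i}(\alpha_{X}) \cdot \beta) = (i^{*}\iota^{i}\alpha_{X}) \cdot i^{*}\beta = 0$; by exactness the product lies in $\iota^{i+j}(H^{i+j}(X)) = \im \pomap{p}{0}$.

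For the factorization condition, with $p, q \geq 1$, $i, j \geq 1$ and $p+q+1 \leq 2n-2$, I would assume WLOG $\overline{p} \leq \overline{q}$ (i.e., $p \geq q$ as integers) by graded commutativity. When $i+j > p+q+1$ the map $\pomap{p+q+1}{q}$ is either the identity (for $i+j > p+q+2$) or surjective onto $H^{p+q+2}(X_{reg})$ (for $i+j = p+q+2$, by applying the transitional-degree argument to the perversity $\overline{p+q+1}$), so the lift is automatic. When $i+j \leq p+q+1$, at least one of $i \leq p$ or $j \leq q$ must hold, since $i \geq p+1$ would force $j \leq p+q+1-i \leq q$. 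Projecting the product $\pomap{p}{q}(\alpha) \cdot \beta$ to $H^{i+j}(X_{reg})$, the factor from the "$\leq p$" or "$\leq q$" side contributes $i^{*}\iota = 0$, hence the product restricts trivially to $L$; by exactness it lies in $\iota^{i+j}(H^{i+j}(X))$, which is precisely the image of $\pomap{p+q+1}{q}$ in the relevant range.

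The main obstacle will be controlling the poset maps on the transitional groups $\HI{p}{p+1}{X} \cong H^{p+1}(X) \oplus R^{p+1}$, where the isomorphism depends on a choice of splitting of the short exact sequence above. Using the pullback model one verifies that $i^{*} \circ \pomap{p}{0}$ on this group recovers the projection onto the $R^{p+1}$ summand; with this in hand the vanishing argument applies uniformly, including the mixed cases where either $\alpha$ or $\beta$ itself lives in a transitional degree.
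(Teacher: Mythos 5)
The paper states this proposition without proof, so there is no argument of the author's to compare yours against; your proposal supplies the missing proof, and it is correct. The two facts doing all the work are the right ones: the acyclicity $H^{r}(\dgaucotr{p}{\Apl{L}(t,dt)})=0$ for $0<r\leq p$, which forces every class of $\HI{p}{i}{X}$ with $1\leq i\leq p$ to restrict to zero on $L$, and the exactness of $H^{k}(X)\to H^{k}(X_{reg})\to H^{k}(L)$ for $k\geq 1$ coming from the zero-dimensionality of $\Sigma$. Your case split is exhaustive: when $i+j\leq p+q+1$ at least one factor lies in the acyclic range, so the product dies on $L$ and hence lands in $\im{(H^{i+j}(X)\to H^{i+j}(X_{reg}))}$, which is the image of $\pomap{p+q+1}{q}$ there; when $i+j\geq p+q+2$ the map $\pomap{p+q+1}{q}$ is onto and the lift is automatic. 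You also correctly isolate the one delicate degree $i+j=q+1$, where the obstruction to lifting through $\HI{q}{q+1}{X}\cong H^{q+1}(X)\oplus R^{q+1}(X_{reg},L)$ is the restriction to $L$ rather than the projection to $H^{q+1}(X_{reg})$; this is exactly what the short exact sequence $0\to\MI{k+1}{X}\to\MI{k}{X}\to M(L,k)\to 0$ of Section \ref{section:Hodge_th} encodes on cohomology, and your claim that $i^{*}\circ\pomap{p}{0}$ recovers the projection onto the $R^{p+1}$ summand is verified by a direct cochain computation in the pullback. Two small remarks. First, the restriction to reduced cohomology on the perverse factor is essential and your argument uses it only silently: a degree-zero class of $\HI{p}{0}{X}$ does not restrict to zero on $L$, and the unity condition genuinely fails without reducing, so the hypothesis $i\geq 1$ deserves to be made explicit. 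Second, Definition \ref{def:sharp_pcdga} also asks the lift to be compatible with graded commutativity and the poset maps; since $\pomap{p+q+1}{q}$ need not be injective in the degrees $q+1<i+j\leq p+q+1$, the lift is not unique there and a word on choosing the lifts coherently would round off the argument, though the paper itself does not engage with this point either.
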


\begin{remark}
It is important to make a difference between the extended product $\mu_{\overline{\bullet}, \overline{\bullet}}$ and the property of sharpness. The existence of the extended product is a consequence of the definition \ref{def:coperverse_cdga} and as such every coperverse cdga defined in the same way naturally has an extended product.

The property of sharpness of our coperverse algebras defined in \ref{def:pb_coperv_cdga} is a consequence of our methods of construction. There might be coperverse algebras which do not have any property of sharpness, but still have an extended product.
\end{remark}

\section{Hodge Theory}
\label{section:Hodge_th}
\subsection{Coperverse mixed Hodge algebras}

We now put a mixed Hodge structure on the coperverse rational model of $X \in \super$.

\begin{defi}
A coperverse filtered cdga $(A_{\overline{\bullet}}, W)$ is a coperverse cdga $A_{\overline{\bullet}}$ together with a filtration $\lbrace W_{m}A_{\overline{\bullet}} \rbrace_{m \in \mathbf{Z}}$ such that
\begin{enumerate}
\item $W_{m-1}A_{\overline{p}} \subset W_{m}A_{\overline{p}}$ and $d(W_{m}A_{\overline{p}}) \subset W_{m}A_{\overline{p}}$, for all $m \in \mathbf{Z}$ and all $\overline{p} \in \Pos{n}$,
\item $W_{m}A_{\overline{p}}.W_{n}A_{\overline{p}} \subset W_{m+n}A_{\overline{p}}$,
\item $W_{m}A_{\overline{p}} \subset W_{m}A_{\overline{q}}$ for all $\overline{p} \leq \overline{q}$ in $\Pos{n}^{op}$,
\item The filtration $W$ is exhaustive and biregular : for all $n \geq 0$ and all $\overline{p} \in \mathcal{P}^{op}$ there exist integers $m$ and $l$ such that $W_{m}A^{n}_{\overline{p}} =0$ and $W_{l}A^{n}_{\overline{p}} = A^{n}_{\overline{p}}$.
\end{enumerate}
\end{defi}

\begin{defi}
A coperverse mixed Hodge cdga over $\mathbf{Q}$ is a coperverse filtered cdga $(A_{\overline{\bullet}}, W)$ with a filtration $F$ on $A_{\overline{\bullet}} \otimes \mathbf{C}$ such that for all $n \geq 0$ and all $\overline{p} \in \Pos{n}^{op}$, 
\begin{enumerate}
\item the triple $(A^{n}_{\overline{p}}, \Dec{W}, F)$ is a mixed Hodge structure,
\item the differential $d \col A_{\overline{p}}^{k} \rightarrow A_{\overline{p}}^{k+1}$, the product $\mu \col A_{\overline{p}}^{i} \times A_{\overline{p}}^{j} \rightarrow A_{\overline{p}}^{i+j}$ and the poset maps $\pomap{p}{q} \col A_{\overline{p}}^{k} \rightarrow A_{\overline{q}}^{k}$ are morphisms of mixed Hodge structures.
\end{enumerate}
The filtration $W$ is called the weight filtration and the filtration $F$ is called the Hodge filtration.
\end{defi}

We will denote, by an abuse of notations, such a mixed Hodge cdga by the triple $(A_{\overline{\bullet}}, W, F)$ with in mind the fact that $F$ is not defined on $A_{\overline{\bullet}}$ but on its complexification $A_{\overline{\bullet}} \otimes \mathbf{C}$. The filtration $\Dec{W}$ is the Deligne's décalage of the weight filtration defined in \cite[15]{Deligne1971} which is given by

\[
\Dec{W_{p}}A_{\overline{\bullet}}^{n} := W_{p-n}A_{\overline{\bullet}}^{n} \cap d^{-1}(W_{p-n-1}A_{\overline{\bullet}}^{n+1}).
\]

We denote by $\mhcdga{Q}$ the category of coperverse mixed Hodge cdga's over $\mathbf{Q}$.

\begin{lem}
Let $(A_{\overline{\bullet}}, W, F)$ be a coperverse mixed Hodge cdga, then the extended product $\mu_{\overline{\bullet},\overline{\bullet}}$ is a morphism of mixed Hodge structure.
\end{lem}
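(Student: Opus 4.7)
The plan is to observe that $\mu_{\overline{p},\overline{q}}$ is by construction a composition of two maps, each of which is a morphism of mixed Hodge structures by the defining axioms of a coperverse mixed Hodge cdga, and then invoke the closure of morphisms of MHS under composition. Unwinding the definition of the extended product one has
\[
\mu_{\overline{p},\overline{q}} \col A_{\overline{p}}^{i} \otimes A_{\overline{q}}^{j} \overset{\pomap{p}{q} \otimes \mathrm{id}}{\longrightarrow} A_{\overline{q}}^{i} \otimes A_{\overline{q}}^{j} \overset{\mu}{\longrightarrow} A_{\overline{q}}^{i+j},
\]
so the task is to verify that each of the two arrows is a morphism of MHS once the tensor products are suitably equipped.

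First I would endow the two tensor products appearing above with the canonical MHS obtained from the tensor product of the MHS on the factors: the weight filtration is the convolution of the $\Dec{W}$'s on the factors and the Hodge filtration is the convolution of the $F$'s. Second, since $\pomap{p}{q} \col A_{\overline{p}}^{i} \to A_{\overline{q}}^{i}$ is a morphism of MHS by the definition of a coperverse mixed Hodge cdga, and since the category of mixed Hodge structures is a tensor category (so that the tensor of two morphisms is again a morphism), the map $\pomap{p}{q} \otimes \mathrm{id}$ is a morphism of MHS. Third, the internal product $\mu \col A_{\overline{q}}^{i} \otimes A_{\overline{q}}^{j} \to A_{\overline{q}}^{i+j}$ is a morphism of MHS by that same definition. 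Composing the two yields the claim.

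There is essentially no obstacle: the argument is really just bookkeeping once the correct tensor-product MHS is put on the source. The one point that merits a moment of thought is justifying that the tensor of the décalated MHS is again an MHS in the standard sense, but since each factor lives in a fixed cohomological degree ($i$ and $j$), the décalage formula shifts both factors by a predictable amount and the tensor product of MHS passes through without modification. Thus the composite $\mu_{\overline{p},\overline{q}}$ is a morphism of mixed Hodge structures, as required.
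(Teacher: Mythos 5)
Your argument is correct and is precisely the one the paper intends (the lemma is stated without proof, and elsewhere the paper justifies the analogous fact by noting that the extended product is a composition of the poset map and the internal product, both morphisms of mixed Hodge structures by definition). Your additional care about equipping the tensor product with the induced mixed Hodge structure and the behaviour of the d\'ecalage is a reasonable bookkeeping check and does not change the substance.
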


\begin{defi}
A coperverse filtered cdga $(A_{\overline{\bullet}}, W)$ is said to be $r$-sharp if $A_{\overline{\bullet}}$ is a filtered coperverse cdga such that the lift is compatible with the filtration $\lbrace W_{m}A_{\overline{\bullet}} \rbrace_{m \in \mathbf{Z}}$. That is we have the two following conditions

\begin{enumerate}
\item \textbf{Filtered unity} For $W_{m}A_{\overline{p}}^{i} \times W_{n}A_{\overline{0}}^{j} \rightarrow  W_{m+n}A_{\overline{0}}^{i+j}$ the product lifts to
\[
\begin{tikzpicture}
\matrix (m)[matrix of math nodes, row sep=3em, column sep=5em, text height=2ex, text depth=0.25ex]
{                                                            &  W_{m+n}A_{\overline{p}}^{i+j}  \\
  W_{m}A_{\overline{p}}^{i} \times W_{n}A_{\overline{0}}^{j} &  W_{m+n}A_{\overline{0}}^{i+j}  \\};

\path[->]
(m-2-1) edge node[auto] {$\mu_{\overline{p},\overline{0}}$} (m-2-2);
\path[->,dashed]
(m-2-1) edge node[auto] {} (m-1-2.south west);
\path[->]
(m-1-2) edge node[auto] {$\pomap{p}{0}$} (m-2-2);
\end{tikzpicture}
\]
\item \textbf{Filtered factorization} For $\overline{p}, \overline{q} \neq \overline{0}$ and $i,j \neq 0$ the product lifts to
\[
\begin{tikzpicture}
\matrix (m)[matrix of math nodes, row sep=3em, column sep=5em, text height=2ex, text depth=0.25ex]
{                                                            &  W_{m+n}A_{\overline{p+q+r}}^{i+j}  \\
  W_{m}A_{\overline{p}}^{i} \times W_{n}A_{\overline{q}}^{j} &  W_{m+n}A_{\overline{q}}^{i+j}  \\};
  
\path[->]
(m-2-1) edge node[auto] {$\mu_{\overline{p},\overline{q}}$} (m-2-2);
\path[->,dashed]
(m-2-1) edge node[auto] {} (m-1-2.south west);
\path[->]
(m-1-2) edge node[auto] {$\pomap{p+q+r}{q}$} (m-2-2);
\end{tikzpicture}
\]
\end{enumerate}
\end{defi}

\begin{defi}
A $r$-sharp coperverse mixed Hodge cdga over $\mathbf{Q}$ is a coperverse mixed Hodge cdga $(A_{\overline{\bullet}}, W,F)$ such that the lift is a morphism of mixed Hodge structure.
\end{defi}

Consider $\mathbf{Q}(t,dt)$ together with the \textit{bête} filtration $\sigma$, that is the multiplicative filtration with $t$ of weight $0$ and $dt$ of weight $-1$. We endow $\mathbf{C}(t,dt) := \mathbf{Q}(t,dt) \otimes \mathbf{C}$ with the bête filtration $\sigma$ and the trivial filtration $t$, that is decreasing filtration given by 
\[
0=t^{1}\mathbf{C}(t,dt) \subset t^{0}\mathbf{C}(t,dt) =\mathbf{C}(t,dt).
\]
Since $\Dec{\sigma} = t$ the triple $(\mathbf{Q}(t,dt), \sigma, t)$ is a mixed Hodge cdga.

Given another mixed Hogde cdga $(A,W,F)$, since the category of mixed Hodge structure is abelian the triple
\[
(A(t,dt), W \ast \sigma, F \ast t)
\]
is again a mixed Hodge cdga where the filtrations are defined by convolution. That is we have
\[
(W \ast \sigma)_{m}A(t,dt)^{n} := W_{m}A^{n} \otimes \mathbf{Q}[t] \oplus W_{m+1}A^{n-1} \otimes \mathbf{Q}[t]dt
\]
and
\[
(F \ast t)^{k}A(t,dt) := F^{k}A \otimes \mathbf{C}(t,dt).
\]
The evaluation map $\delta_{1}$ is strictly compatible with filtrations.

\begin{lem}
Let $(A,W,F)$ be a mixed Hodge cdga. Then for all $\overline{p} \in \Pos{n}^{op}$, $\dgaucotr{p}{A(t,dt)}$ is a $(-1)$-sharp mixed Hodge cdga.
\end{lem}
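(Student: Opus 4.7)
The plan is to inherit the weight filtration $W \ast \sigma$ and the Hodge filtration $F \ast t$ from the mixed Hodge cdga $A(t,dt)$ on each degree of $\dgaucotr{p}{A(t,dt)}$. For any degree $r \neq p$, each summand appearing in the definition — namely $A^{r} \otimes \mathbf{Q}[t]t$, $A^{r} \otimes \mathbf{Q}[t]$, $A^{r-1} \otimes \mathbf{Q}[t]dt$, and the unital piece $A^{0}$ — is visibly stable under the convolution filtrations, since both $W \ast \sigma$ and $F \ast t$ respect the grading of $\mathbf{Q}[t]$ by monomials in $t$ and the separation of the $\mathbf{Q}[t]$ and $\mathbf{Q}[t]dt$ summands. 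Hence these pieces form sub-MHS of $A(t,dt)^{r}$, and their direct sums inherit a mixed Hodge structure.

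The delicate case is $r = p$, where the third summand $\coim{p} = \im{s_{p}}$ depends on a choice of linear section $s_{p} \col \mathrm{Coim\,}d^{p} \to A^{p}$. Since $d^{p}$ is a morphism of mixed Hodge structures, $\ker d^{p}$ is a sub-MHS of $A^{p}$ and $\mathrm{Coim\,}d^{p}$ carries the quotient MHS. I would transport this MHS to $\coim{p}$ via the isomorphism $\mathrm{Coim\,}d^{p} \xrightarrow{\sim} \coim{p}$ induced by $s_{p}$, so that $\dgaucotr{p}{A(t,dt)}^{p}$ becomes a direct sum of three mixed Hodge structures. To make $\coim{p}$ actually sit as a sub-MHS of $A^{p}$ with the restricted filtrations, one arranges $s_{p}$ to be compatible with Deligne's canonical bigrading of $A^{p} \otimes \mathbf{C}$.

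With the filtrations in place on every degree, the next step is to verify that the differential, the product $\mu$ and the poset maps $\pomap{p}{q}$ of $\dgaucotr{\bullet}{A(t,dt)}$ are morphisms of mixed Hodge structures. Each of these is obtained from the corresponding map on $A(t,dt)$ by restriction, together with the identifications $\coim{p} \cong \mathrm{Coim\,}d^{p}$ at the middle degree and the natural inclusions $A^{r}\otimes \mathbf{Q}[t]t \hookrightarrow A^{r}\otimes \mathbf{Q}[t]$ between successive cotruncations. Their strict compatibility with $W \ast \sigma$ and $F \ast t$ therefore descends from that of the ambient $A(t,dt)$ (granted by hypothesis), yielding the coperverse mixed Hodge cdga structure on $\dgaucotr{\bullet}{A(t,dt)}$.

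Finally, the $(-1)$-sharp unity and factorization lifts furnished by the unfiltered analogue of this lemma are again restrictions of the product on $A(t,dt)$, so they preserve both filtrations and are morphisms of MHS; this upgrades $\dgaucotr{\bullet}{A(t,dt)}$ to a $(-1)$-sharp coperverse mixed Hodge cdga. The main technical obstacle is the treatment of $\coim{p}$: since the short exact sequence $0 \to \ker d^{p} \to A^{p} \to \mathrm{Coim\,}d^{p} \to 0$ need not split in the category of mixed Hodge structures over $\mathbf{Q}$, the section $s_{p}$ cannot in general be required to be a $\mathbf{Q}$-linear MHS morphism, and Deligne's splitting over $\mathbf{C}$ must be used; one must moreover ensure that these choices remain coherent as $\overline{p}$ varies so that the poset maps $\pomap{p}{q}$ stay compatible with the filtrations.
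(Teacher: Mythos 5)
Your overall strategy coincides with the paper's: regard $\dgaucotr{p}{A(t,dt)}$ as a sub-object of the mixed Hodge cdga $(A(t,dt), W\ast\sigma, F\ast t)$ with the restricted filtrations, and observe that the differential, the poset maps (identities or canonical inclusions) and the extended product are restrictions of morphisms of mixed Hodge structures, so that the $(-1)$-sharp lifts inherited from the unfiltered lemma are automatically filtered. Your treatment of the degrees $r\neq p$ via the decomposition of $A^{r}\otimes\mathbf{Q}[t]$ into $t^{i}$-isotypic pieces is a correct expansion of what the paper leaves implicit, and your worry about coherence of the sections as $\overline{p}$ varies is harmless: each degree $r$ carries one fixed $s_{r}$, and the poset maps are plain inclusions of subspaces of $A(t,dt)$.

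Where you depart from the paper is at the cut-off degree, and you have correctly isolated the one genuinely delicate point, which the paper's own proof passes over: for the restricted filtrations to define a mixed Hodge structure on $\dgaucotr{p}{A(t,dt)}^{p}$ one needs $\coim{p}$ to be a sub-MHS of $A^{p}$, i.e.\ the exact sequence $0\to\ker d^{p}\to A^{p}\to\mathrm{Coim\,}d^{p}\to 0$ must split in the category of mixed Hodge structures over $\mathbf{Q}$, which is not automatic since that category is not semisimple. However, your proposed resolution does not close this gap. A complement of $\ker d^{p}$ compatible with Deligne's bigrading exists over $\mathbf{C}$ but need not be defined over $\mathbf{Q}$, so it cannot serve as the rational subspace $\coim{p}$; and transporting the quotient MHS from $\mathrm{Coim\,}d^{p}$ along an arbitrary rational section $s_{p}$ replaces the restricted filtration on $\coim{p}$ by a possibly larger one, after which the compatibility of the product and of the inclusion into $\EIbidg{1}{p}{0,p}{X}$-type objects with $W$ is no longer a formal consequence of the ambient structure — your argument tacitly uses both descriptions of the filtration on $\coim{p}$ at once. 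To complete the proof one must either exhibit a rational MHS-splitting of this particular extension (for instance by imposing extra hypotheses on $A$, such as $d(W_{m})\subset W_{m-1}$ together with a splitting of $W$ over $\mathbf{Q}$), or settle for the statement after extension of scalars to $\mathbf{C}$, which is what is actually used in Theorem \ref{thm:AI_to_EI1}. As it stands the step ``$\coim{p}$ is a sub-MHS'' is asserted rather than proved, both in your write-up and in the paper.
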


\begin{proof}
The triple $(A(t,dt), W \ast \sigma, F \ast t)$ is a mixed Hodge cdga, for all $\overline{p} \in \mathcal{P}^{op}$, $\dgaucotr{p}{A(t,dt)}$ is a sub-algebra with the filtrations induced by restriction. 

The differential is a morphism of mixed Hodge structure since the differential on $(A(t,dt), W \ast \sigma, F \ast t)$ is and $d(\dgaucotr{p}{A(t,dt)}) \subset \dgaucotr{p}{A(t,dt)}$.

The poset maps $\pomap{k+1}{k}$, $k \geq 0$, are the identity everywhere but at the cut-off degree $k+1$ where they are canonical inclusions, $\pomap{k+1}{k}$ in then compatible with both filtrations and by composition so are the $\pomap{p}{q}$.

The extended product $\dgaucotr{p}{A(t,dt)}^{i} \times \dgaucotr{q}{A(t,dt)}^{j}  \rightarrow \dgaucotr{q}{A(t,dt)}^{i+j}$ being defined as the composition of $\mu$ with poset maps $\pomap{p}{q}$, it is a morphism of mixed Hodge structure. 

The sharpness comes from the fact that $\dgaucotr{p}{A(t,dt)}$ is $(-1)$-sharp and that the product is a morphism of mixed Hodge structure.
\end{proof}

Let then $f \col (A, W, F) \rightarrow (B, W, F)$ be a morphism of mixed Hodge cdga. Since the category of mixed Hodge structures is abelian, see \cite[Theorem 2.3.5]{Deligne1971}, we have the following proposition.
\begin{propo}
The coperverse cdga $\pb{\bullet}{f}$ is a  coperverse mixed Hodge cdga. 
\end{propo}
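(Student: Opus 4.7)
My plan is to build the mixed Hodge structure on $\pb{\bullet}{f}$ by restriction from the ambient product, using that the category of mixed Hodge structures is abelian (Deligne, \cite[Theorem 2.3.5]{Deligne1971}), so that kernels — and hence pullbacks — exist and are computed as in the underlying (bi)filtered vector spaces. The pullback that defines $\pb{p}{f}$ is taken in $\CDGA{k}$, but because the forgetful functor to graded vector spaces preserves limits, degreewise we have
\[
\pb{p}{f}^{n} \;=\; \{(a,\beta) \in A^{n} \times \dgaucotr{p}{B(t,dt)}^{n} \,:\, f(a) = \delta_{1}(\beta)\},
\]
so that $\pb{p}{f}^{n}$ is the kernel of the difference map $(a,\beta) \mapsto f(a)-\delta_{1}(\beta)$ from $A^{n}\oplus \dgaucotr{p}{B(t,dt)}^{n}$ to $B^{n}$. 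Equip each $\pb{p}{f}^{n}$ with the weight and Hodge filtrations induced by intersecting with the product filtrations on $A^{n}\oplus \dgaucotr{p}{B(t,dt)}^{n}$.

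Next I would verify that $(\pb{p}{f}^{n},\Dec{W},F)$ is a mixed Hodge structure. By hypothesis $f$ is a morphism of mixed Hodge cdga's, and by the preceding lemma the evaluation $\delta_{1} \col \dgaucotr{p}{B(t,dt)} \to B$ is too (it is the restriction of the strictly compatible $\delta_{1}\col B(t,dt)\to B$, and $\dgaucotr{p}{B(t,dt)}$ is a mixed Hodge sub-cdga by the same lemma). Thus the difference $f-\delta_{1}$ is a morphism of mixed Hodge structures on each degree, and abelianness yields a canonical MHS on its kernel; concretely this MHS is precisely the restricted one, since $\Dec{}$ is functorial and commutes with kernels (it is defined as $\Dec{W_{p}}X^{n} = W_{p-n}X^{n}\cap d^{-1}(W_{p-n-1}X^{n+1})$, which is itself a kernel construction, hence preserves subobjects stable under $d$). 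Biregularity and exhaustiveness are inherited immediately from $A$ and $\dgaucotr{p}{B(t,dt)}$ by intersection.

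It remains to check the coperverse mixed Hodge cdga axioms for the whole family $\pb{\bullet}{f}$. The differential, product and poset maps on $\pb{p}{f}$ are all defined componentwise from the corresponding maps on $A$ and $\dgaucotr{p}{B(t,dt)}$; since each component is a morphism of mixed Hodge structures — for $A$ by hypothesis, for $\dgaucotr{p}{B(t,dt)}$ by the previous lemma, and for the poset maps $\pomap{p}{q}\col \dgaucotr{p}{B(t,dt)} \to \dgaucotr{q}{B(t,dt)}$ (which are either identities or canonical inclusions, hence strict) again by that lemma — the induced operations on the pullback are morphisms of MHS, and satisfy the multiplicativity $W_{m}\cdot W_{n}\subset W_{m+n}$ coordinatewise. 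Compatibility of the poset maps $\pomap{p}{q}\col \pb{p}{f}\to \pb{q}{f}$ with the MHS follows from functoriality of the pullback.

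I do not expect a genuine obstacle here: the whole argument is a formal consequence of the abelianness of mixed Hodge structures together with the lemma giving the MHS on $\dgaucotr{p}{B(t,dt)}$. The only point one has to touch gently is the interplay between the décalage and the pullback, but since $\Dec$ is a functor on filtered cochain complexes commuting with kernels, and the weight filtration on the pullback is defined by restriction, the check reduces to unwinding definitions.
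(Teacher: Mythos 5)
Your argument is correct and takes essentially the same route as the paper, which gives no written proof beyond invoking the abelianness of the category of mixed Hodge structures (Deligne) together with the preceding lemma on $\dgaucotr{p}{B(t,dt)}$ — precisely the mechanism you spell out by realizing the degreewise pullback as the kernel of a morphism of mixed Hodge structures and restricting the filtrations. The extra checks you perform (décalage commuting with the kernel, componentwise compatibility of differential, product and poset maps) are exactly the details the paper leaves implicit.
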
 

\subsection{Mixed Hodge structure on the coperverse rational model of the intersection spaces \texorpdfstring{$\I{\bullet}{X}$}{IpX}}

\begin{defi}[\cite{Cirici2014}]
A mixed Hodge diagram of cdga's over $\mathbf{Q}$ consists of a filtered cdga $(A_{\mathbf{Q}}, W)$ over $\mathbf{Q}$, a bifiltered cdga $(A_{\mathbf{C}}, W, F)$ over $\mathbf{C}$, together with a string of filtered $E_{1}$-quasi-isomorphisms from $(A_{\mathbf{Q}}, W)\otimes \mathbf{C}$ to $(A_{\mathbf{C}}, W)$. in addition, the following axioms must hold :
\begin{itemize}
\item The weight filtrations $W$ are regular and exhaustive. The Hodge filtration $F$ is biregular. The cohomology $H(A_{\mathbf{Q}})$ has finite type.
\item For all $p \in \mathbf{Z}$, the differential of $\gr{p}{W}{A_{\mathbf{C}}}$ is strictly compatible with $F$.
\item For all $n \geq 0$ and all $p \in \mathbf{Z}$, the filtration $F$ induced on $H^{n}(\gr{p}{W}{A_{\mathbf{C}}})$ defines a pure Hodge structure of weight $p+n$ on $H^{n}(\gr{p}{W}{A_{\mathbf{Q}}})$.
\end{itemize}
\end{defi}

Morphisms of mixed Hodge diagrams are defined by level-wise morphisms of bifiltered cdga's such that the associated diagram is strictly commutative. Forgetting the multiplicative structure gives back the notion of mixed Hodge complex defined by Deligne in \cite[section 8.1]{Deligne1974}.

\begin{defi}
Let $X$ be a topological space. A mixed Hodge diagram for $X$ is a mixed Hodge diagram $M(X)$ such that $M(X)_{\mathbf{Q}} \simeq \Apl{X}$, that is its rational component is quasi-isomorphic to the rational algebra of piecewise linear forms on $X$.
\end{defi}

The following theorem is a modified version of a theorem appearing in \cite[theorem 3.10]{Chataur} stating that the intersection homotopy type of a complex variety $X$ with only isolated singularities carries well-defined mixed Hodge structures.

\begin{thm}
\label{thm:coperverse_MI}
Let $X \in \super$ of complex dimension $n$. There exist a coperverse mixed Hodge cdga $\MI{\bullet}{X}$ together with a string of quasi-isomorphisms
\[
\MI{\bullet}{X} \leftarrow \ast \rightarrow AI_{\overline{\bullet}}(X)
\]
such that :
\begin{enumerate}
\item $\MI{\bullet}{X} = \pb{\bullet}{\tilde{\iota}}$ where $\tilde{\iota} \col M(X_{reg}) \rightarrow M(L)$ is a model of mixed Hodge cdga's for the rational homotopy type of the inclusion $i \col L \hookrightarrow X_{reg}$.
\item there is an isomorphism of coperverse mixed Hodge cdga's 
\[
H^{\ast}(\MI{\bullet}{X}) \cong \HI{\bullet}{\ast}{X}.
\]
\item The mixed Hodge cdga's $\MI{0}{X}$ and $\MI{\infty}{X}$ defines respectively the mixed Hodge structure on the rational homotopy type of the regular part $X_{reg}$ of $X$ and on the normalisation $\overline{X}$ of $X$.
\item The differential of $\MI{\bullet}{X}$ satisfies $d(W_{p}\MI{\bullet}{X}) \subset W_{p-1}\MI{\bullet}{X}$.
\end{enumerate}

This defines a functor 
\[
MI_{\overline{\bullet}} \col \super \longrightarrow \Ho{\mhcdga{Q}}.
\]
\end{thm}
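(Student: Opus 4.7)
The plan is to construct $\MI{\bullet}{X}$ by applying the pullback machinery of Section \ref{section:coperv_algebras} to a mixed Hodge cdga model of the inclusion $i \col L \hookrightarrow X_{reg}$. The essential Hodge-theoretic input, following \cite{Chataur}, is a morphism of mixed Hodge cdga's $\tilde{\iota} \col M(X_{reg}) \to M(L)$ that models the rational homotopy type of $i$. One builds this from a log resolution $\pi \col \tilde{X} \to X$ whose exceptional divisor $E$ over $\Sigma$ has simple normal crossings, taking $M(X_{reg})$ to be the logarithmic de Rham complex on $(\tilde{X}, E)$ and $M(L)$ a compatible ``boundary'' complex for the link, with $\tilde{\iota}$ induced by restriction. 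A zig-zag of filtered quasi-isomorphisms then connects $\tilde{\iota}$ to $i^{\ast} \col \Apl{X_{reg}} \to \Apl{L}$.

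Setting $\MI{\bullet}{X} := \pb{\bullet}{\tilde{\iota}}$, the proposition preceding the theorem immediately yields that $\MI{\bullet}{X}$ is a coperverse mixed Hodge cdga: the preceding lemma makes $\dgaucotr{\bullet}{M(L)(t,dt)}$ into a coperverse mixed Hodge cdga, and the abelian structure of the category of mixed Hodge structures transports this to the pullback. To obtain the zig-zag $\MI{\bullet}{X} \leftarrow \ast \rightarrow \AI{\bullet}{X}$, I would apply $\pb{\bullet}{-}$ levelwise to the zig-zag between $\tilde{\iota}$ and $i^{\ast}$. The key verification is that the functors $A \mapsto \dgaucotr{p}{A(t,dt)}$ and the pullback construction preserve pointwise quasi-isomorphisms; this follows directly from the explicit form of these constructions combined with the observation that $\delta_{1}$ is a fibration in the model structure of Theorem \ref{thm:model_structure}, so $\pb{\bullet}{-}$ computes a homotopy pullback.

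For item (2), the cohomology identification follows from the zig-zag together with the previously established isomorphism $H^{\ast}(\AI{\bullet}{X}) \cong \HI{\bullet}{\ast}{X}$, the mixed Hodge structure on $\HI{\bullet}{\ast}{X}$ being inherited functorially from $\MI{\bullet}{X}$. For item (3), at $\overline{p} = \overline{0}$ the evaluation $\delta_{1} \col \dgaucotr{0}{M(L)(t,dt)} \to M(L)$ is a quasi-isomorphism, so $\MI{0}{X}$ is quasi-isomorphic to $M(X_{reg})$, recovering the standard MHS on $X_{reg}$; at $\overline{p} = \overline{\infty}$ the convention $\tr{\overline{\infty}}{-} = \mathrm{id}$ collapses the cotruncation to $\mathbf{Q}$, so $\MI{\infty}{X} \cong M(X_{reg}) \times_{M(L)} \mathbf{Q}$ recovers the standard MHS on $\overline{X}$. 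Item (4), the Deligne condition $d(W_{p}) \subset W_{p-1}$, is built into the logarithmic de Rham complex and propagates through the convolution $W \ast \sigma$ on $M(L)(t,dt)$, the cotruncation, and the pullback. Functoriality in $\super$ follows from naturality of log resolutions up to homotopy together with the functoriality of the pullback construction, yielding a well-defined assignment into $\Ho{\mhcdga{Q}}$.

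The principal obstacle I anticipate is the construction of the strict morphism $\tilde{\iota}$ of mixed Hodge cdga's, rather than merely a zig-zag modeling $i^{\ast}$; this requires careful rectification along a Godement-type resolution on $\tilde{X}$, as in \cite{Chataur}. A secondary subtlety is the choice of section $s_{r}$ entering the definition of the cotruncation $\dgaucotr{p}{M(L)(t,dt)}$: this section should be compatible with both the Hodge and weight filtrations, which is not automatic. The strictness of morphisms of mixed Hodge structures (so that kernels, images, and coimages inherit canonical MHS) is what ultimately allows such sections to be chosen compatibly, and more importantly guarantees that the homotopy type of $\MI{\bullet}{X}$ in $\Ho{\mhcdga{Q}}$ is independent of the choice.
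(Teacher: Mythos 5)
Your proposal follows essentially the same route as the paper: the paper obtains the strict morphism $\tilde{\iota}$ of mixed Hodge cdga's by invoking Durfee--Hain and Cirici--Guill\'en rather than re-deriving it from a log resolution, then sets $\MI{\bullet}{X} := \pb{\bullet}{\tilde{\iota}}$ and uses the abelian-ness of the category of mixed Hodge structures exactly as you do, with the same treatment of items (2)--(4) and of functoriality. The only slip is cosmetic: at $\overline{p}=\overline{\infty}$ the unital cotruncation retains $M(L)^{0}$, so the pullback is taken over $H^{0}(L)\cong\mathbf{Q}^{\nu}$ rather than over $\mathbf{Q}$, which is precisely what yields the normalisation $\overline{X}$ rather than the one-point cone on the link.
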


\begin{proof}
The proof is similar to \cite[theorem 3.10]{Chataur}.
By \cite[theorem 3.2.1]{Durfee1988}, there is a morphism of mixed Hodge diagrams $M(X_{reg}) \rightarrow M(L)$ induced by the inclusion $i \col L \hookrightarrow X_{reg}$. The rational component of this morphism is the morphism $i^{\ast} \col \Apl{X_{reg}} \rightarrow \Apl{L}$ of rational piecewise linear forms induced by the inclusion $i \col L \hookrightarrow X_{reg}$. By \cite[theorem 3.19]{Cirici2014}, there is a commutative diagram of mixed Hodge diagrams

\[
\begin{tikzpicture}
\matrix (m)[matrix of math nodes, row sep=3em, column sep=2.5em, text height=1.5ex, text depth=0.25ex]
{ \Apl{X_{reg}}  & \Apl{L}  \\
  \ast           & \ast  \\
  M(X_{reg})     & M(L) \\};

\path[->]
(m-1-1) edge node[auto] {$i^{\ast}$} (m-1-2);
\path[->]
(m-2-1) edge node[style] {} (m-2-2);
\path[->]
(m-3-1) edge node[auto] {$\tilde{\iota}$} (m-3-2);
\path[->]
(m-2-1) edge node[auto] {} (m-1-1);
\path[->]
(m-2-2) edge node[auto] {} (m-1-2);
\path[->]
(m-2-1) edge node[auto] {} (m-3-1);
\path[->]
(m-2-2) edge node[auto] {} (m-3-2);
\end{tikzpicture}
\]
where the vertical maps are quasi-isomorphisms and $\tilde{\iota}$ is a map of mixed Hodge cdga's whose differential satisfies $d(W_{p}) \subset W_{p-1}$. We then let $\MI{\bullet}{X} := \pb{\bullet}{\tilde{\iota}}$. This construction is functorial for stratified morphisms. The above commutative diagram defines a string of quasi-isomorphisms from $\MI{\bullet}{X}$ to $\AI{\bullet}{X}$.

Let now show that $\MI{\bullet}{X}$ is a coperverse mixed Hodge cdga. Consider the mixed Hodge cdga $M(L)(t,dt)$ defined as in definition \ref{def:At,dt}. Then $\dgaucotr{p}{M(L)(t,dt)}$ is a complex of mixed Hodge structure for every perversities $\overline{p} \in \Pos{n}^{op}$. The product 
\[
\dgaucotr{p}{M(L)(t,dt)} \times \dgaucotr{q}{M(L)(t,dt)} \longrightarrow \dgaucotr{q}{M(L)(t,dt)}
\]
and the poset maps
\[
\dgaucotr{p}{M(L)(t,dt)} \longrightarrow \dgaucotr{q}{M(L)(t,dt)}
\]
for $\overline{p} \leq \overline{q} \in \Pos{n}^{op}$ are strictly compatible with filtrations. Since the category of mixed Hodge structures is abelian, for each $n \geq 0$ and each $\overline{p} \in \Pos{n}^{op}$, the vector space $\MI{p}{X}^{n}$ carries a mixed Hodge structure. The compatibility with product and poset maps is a matter of verifications. This proves the first three properties. 

The differential on $\MI{p}{X}$ being defined via the pull-back of cdga's whose differential satisfies $d(W_{p}) \subset W_{p-1}$, this also holds for $\MI{p}{X}$.

Functoriality follows by construction.
\end{proof}

From this result we can deduce the two following product structure.

\begin{cor}
Let $X \in \super$ with only isolated singularities, then the family $\lbrace \MI{p}{X} \rbrace_{(\overline{p})}$ is a $(-1)$-sharp mixed Hodge coperverse cdga.
\end{cor}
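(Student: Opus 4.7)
The plan is to assemble three ingredients already established in the paper and then finish with a universal-property argument. First, Theorem \ref{thm:coperverse_MI} gives $\MI{\bullet}{X} = \pb{\bullet}{\tilde{\iota}}$, where $\tilde{\iota} \col M(X_{reg}) \to M(L)$ is a morphism of mixed Hodge cdga's; in particular $\MI{\bullet}{X}$ is already a coperverse mixed Hodge cdga, so the weight and Hodge filtrations are in place and compatible with the differential, product, and poset maps. Second, Corollary \ref{cor:J(f)_-1_pcdga} guarantees that $\pb{\bullet}{\tilde{\iota}}$ is $(-1)$-sharp as an underlying coperverse cdga, so the required unit and factorization lifts already exist at the level of $\mathbf{Q}$-algebras. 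Third, the lemma immediately preceding this corollary shows that $\dgaucotr{\bullet}{M(L)(t,dt)}$ is itself a $(-1)$-sharp mixed Hodge cdga, and the product on $M(X_{reg})$ is by construction a morphism of mixed Hodge structures.

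Given this, the remaining task is purely formal: I would verify that the existing lifts from Corollary \ref{cor:J(f)_-1_pcdga} are morphisms of mixed Hodge structures with the weight-degree shifts required in the definition of a $(-1)$-sharp mixed Hodge coperverse cdga. Unwinding definitions, for each $\overline{p},\overline{q}$ and $i,j$ in the relevant range the target $\MI{p+q-1}{X}^{i+j}$ is the pullback, computed in the abelian category of mixed Hodge structures (Deligne \cite[Theorem 2.3.5]{Deligne1971}), of the cospan
\[
M(X_{reg})^{i+j} \xrightarrow{\tilde{\iota}} M(L)^{i+j} \xleftarrow{\delta_{1}} \dgaucotr{p+q-1}{M(L)(t,dt)}^{i+j}.
\]
The source $\MI{p}{X}^{i} \times \MI{q}{X}^{j}$ maps into the two corners of this cospan via the product on $M(X_{reg})$ and via the sharp lift supplied by the preceding lemma on $\dgaucotr{\bullet}{M(L)(t,dt)}$, both of which are morphisms of mixed Hodge structures. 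Their agreement after composition with $\tilde{\iota}$ and $\delta_{1}$ is exactly the compatibility that produced the lift in Corollary \ref{cor:J(f)_-1_pcdga}. By the universal property of pullbacks in $\mathbf{MH}$, the induced map into $\MI{p+q-1}{X}^{i+j}$ is itself a morphism of mixed Hodge structures and coincides with the sharp lift from Corollary \ref{cor:J(f)_-1_pcdga}. The unit case is handled identically with $\overline{q}=\overline{0}$.

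I do not anticipate any genuine obstacle. The only delicate point is to line up the weight filtrations on the two sides of the pullback: the "filtered unity" and "filtered factorization" shifts have to match what happens on $\dgaucotr{\bullet}{M(L)(t,dt)}$. This is automatic because pullbacks in $\mathbf{MH}$ agree with pullbacks of the underlying graded vector spaces equipped with their induced filtrations, and the sharp lift on the cotruncation respects the convolution filtration $W \ast \sigma$ with exactly the correct weight shifts. The remaining axioms (Leibniz, graded commutativity, compatibility with poset maps and with morphisms of mixed Hodge cdga's) transfer termwise from the analogous properties on $M(X_{reg})$ and $\dgaucotr{\bullet}{M(L)(t,dt)}$.
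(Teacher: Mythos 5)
Your proposal is correct and follows exactly the route the paper intends: the corollary is stated there without an explicit proof, as an immediate consequence of Theorem \ref{thm:coperverse_MI}, of Corollary \ref{cor:J(f)_-1_pcdga}, and of the lemma asserting that $\dgaucotr{\bullet}{M(L)(t,dt)}$ is a $(-1)$-sharp mixed Hodge cdga. Your universal-property argument in the abelian category of mixed Hodge structures (with the induced filtrations on the pullback) is precisely the natural way to fill in the omitted verification that the existing sharp lifts are morphisms of mixed Hodge structures, so nothing is missing.
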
 

\begin{cor}
Let $X \in \super$ with only isolated singularities, then the family of algebras $\lbrace \HI{0}{\ast}{X},\redHI{1}{\ast}{X}, \dots, \redHI{2n-2}{\ast}{X} \rbrace$ is endowed with a product
\[
\begin{cases}
\HI{0}{i}{X} \otimes \redHI{p}{j}{X} \longrightarrow \redHI{p}{i+j}{X} & \\
\redHI{p}{i}{X} \otimes \redHI{q}{j}{X} \longrightarrow \redHI{p+q+1}{i+j}{X} & p+q+1 \leq 2n-2.
\end{cases}
\]
This product is a morphism of mixed Hodge structure.
\end{cor}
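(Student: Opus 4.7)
The plan is to derive this from the preceding corollary (which shows that $\MI{\bullet}{X}$ is a $(-1)$-sharp coperverse mixed Hodge cdga), the earlier proposition establishing that $\HI{\bullet}{\ast}{X}$ is a $1$-sharp coperverse cdga, and theorem \ref{thm:coperverse_MI} which transfers the mixed Hodge structure from $\MI{\bullet}{X}$ to the cohomology $\HI{\bullet}{\ast}{X}$.

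First I would note that by the preceding corollary the cochain-level $(-1)$-sharp lift $\MI{p}{X}^i \otimes \MI{q}{X}^j \to \MI{p+q-1}{X}^{i+j}$ is a morphism of mixed Hodge structures. Taking cohomology and using the isomorphism of coperverse mixed Hodge cdga's $H^{\ast}(\MI{\bullet}{X}) \cong \HI{\bullet}{\ast}{X}$ from theorem \ref{thm:coperverse_MI}, I obtain a MHS-compatible product $\redHI{p}{i}{X} \otimes \redHI{q}{j}{X} \to \HI{p+q-1}{i+j}{X}$.

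Next I would invoke the earlier proposition on the $1$-sharpness of $\HI{\bullet}{\ast}{X}$, which provides a further lift $\redHI{p}{i}{X} \otimes \redHI{q}{j}{X} \to \redHI{p+q+1}{i+j}{X}$ whose composition with $\pomap{p+q+1}{p+q-1}$ recovers the $(-1)$-sharp product above. Since $\pomap{p+q+1}{p+q-1}$ is itself a morphism of mixed Hodge structures (by theorem \ref{thm:coperverse_MI}) and is injective in every degree relevant to the image (by direct inspection of the explicit description of $\HI{\bullet}{r}{X}$, where it reduces either to the identity or to the canonical inclusion $H^{r}(X) \hookrightarrow H^{r}(X) \oplus R^{r}(X_{reg},L)$), the $1$-sharp lift to $\redHI{p+q+1}{i+j}{X}$ is itself MHS-compatible. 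The Unity-type product $\HI{0}{i}{X} \otimes \redHI{p}{j}{X} \to \redHI{p}{i+j}{X}$ is handled identically, via the Filtered Unity clause of sharpness together with the fact that $\HI{0}{\ast}{X}$ plays the role of the unit.

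The main obstacle is the injectivity verification for $\pomap{p+q+1}{p+q-1}$ on the image of the cohomology product. This requires a small case analysis along the three cases (depending on the position of $\overline{p+q+1}$ relative to the middle perversity $\overline{m} = n-1$) from the explicit formulae for $\HI{\bullet}{r}{X}$ and the bounds on the weight filtration given at the beginning of the paper; each case is straightforward once the relevant weights are matched with the components $H^{r}(X)$, $R^{r}(X_{reg},L)$ and $H^{r}(X_{reg})$, and once this is verified the MHS-compatibility of the $1$-sharp lift follows formally from the MHS-compatibility of the $(-1)$-sharp lift.
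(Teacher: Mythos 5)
The paper states this corollary without proof (as a consequence of Theorem \ref{thm:coperverse_MI} together with the earlier $1$-sharpness proposition for $(\HI{\bullet}{\ast}{X},0)$), so there is no written argument to match yours against; your strategy of producing the product from $1$-sharpness and transferring MHS-compatibility from the $(-1)$-sharp structure on $\MI{\bullet}{X}$ by factoring through a poset map is a reasonable attempt. However, the step on which it all rests --- injectivity of $\pomap{p+q+1}{p+q-1}$ in the degrees carrying the image of the product --- is false in general. Your description of the poset maps as ``identity or the inclusion $H^{r}(X)\hookrightarrow H^{r}(X)\oplus R^{r}(X_{reg},L)$'' only covers a single step $\pomap{k+1}{k}$ in degrees $r\leq k+1$; in degree $r=k+2$ that step is a map $H^{k+2}(X)\oplus R^{k+2}(X_{reg},L)\to H^{k+2}(X_{reg})$, and in degree $r=p+q+1$ the composite $\pomap{p+q+1}{p+q-1}$ factors through $H^{r}(X)\to H^{r}(X_{reg})$, whose kernel is $\im{(H^{r-1}(L)\to H^{r}(X))}$ and is typically nonzero. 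Concretely, for the projective cone over a K3 surface with $p=q=1$ and $i=j=2$ (so $i+j=p+q+2$), the map $\pomap{3}{1}$ in degree $4$ is $\HI{3}{4}{\mathbb{P}_{\mathbf{C}}S}\cong\mathbf{Q}^{22}\to\HI{1}{4}{\mathbb{P}_{\mathbf{C}}S}\cong\mathbf{Q}$. Without injectivity the implication ``$g\circ f$ and $g$ are morphisms of MHS, hence $f$ is'' breaks down: strictness only yields $f(W_{m})\subset W_{m}+\ker g$. Since the degrees $i+j=p+q+1$ and $i+j=p+q+2$ are precisely the ones reached by $i=p+1$, $j=q$ or $j=q+1$, these are not edge cases you can exclude.

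There is a second, smaller gap: you assert that the $1$-sharp lift followed by $\pomap{p+q+1}{p+q-1}$ recovers the $(-1)$-sharp product on cohomology, but the definition of sharpness only ties each lift to $\mu_{\overline{p},\overline{q}}$ after projecting all the way down to $A_{\overline{q}}$, so identifying the two again requires an injectivity that is unavailable. A workable repair must argue directly in the transition degrees $p+q\leq i+j\leq p+q+2$, where $\redHI{p+q+1}{i+j}{X}$ and the two factors decompose into the pieces $H^{\ast}(X)$, $R^{\ast}(X_{reg},L)$ and $H^{\ast}(X_{reg})$; each piece is a natural sub- or quotient-mixed Hodge structure and the components of the lifted product are induced by cup products and restriction maps of $X$, $X_{reg}$ and $L$, all of which are morphisms of mixed Hodge structures. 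Outside those degrees the lift agrees with the extended product $\mu_{\overline{p},\overline{q}}$ up to an identity or a split inclusion of mixed Hodge structures, and there your argument does go through.
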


Due to the method of construction of the coperverse mixed Hodge cdga $\MI{\bullet}{X}$, we have the following commutative diagram of mixed Hodge cdga's.
\[
\begin{tikzpicture}[injection/.style={right hook->,fill=white, inner sep=2pt}]
\matrix (m)[matrix of math nodes, row sep=3em, column sep=5em, text height=2ex, text depth=0.25ex]
{ M(X_{reg})  &  M(L)  & \dgaucotr{k+1}{M(L)(t,dt)} \\
  M(X_{reg})  &  M(L)  & \dgaucotr{k}{M(L)(t,dt)}\\
  0           &  0     & M(L,k)\\};

\path[->]
(m-1-1) edge node[auto] {$\tilde{\iota}$} (m-1-2);
\path[->]
(m-1-3) edge node[auto,swap] {$\delta_{1}$} (m-1-2);
\path[->]
(m-2-1) edge node[auto] {$\tilde{\iota}$} (m-2-2);
\path[->]
(m-2-3) edge node[auto,swap] {$\delta_{1}$} (m-2-2);
\path[->]
(m-3-1) edge node[auto] {} (m-3-2);
\path[->]
(m-3-3) edge node[auto] {} (m-3-2);

\path[->]
(m-1-1) edge node[auto] {} (m-2-1);
\path[->]
(m-1-2) edge node[auto] {} (m-2-2);
\path[injection]
(m-1-3) edge node[auto] {$\pomap{k+1}{k}$} (m-2-3);
\path[->]
(m-2-1) edge node[auto] {} (m-3-1);
\path[->]
(m-2-2) edge node[auto] {} (m-3-2);
\path[->>]
(m-2-3) edge node[auto] {} (m-3-3);
\end{tikzpicture}
\]
Where each elements of the last row is the quotient of the previous elements in the same column. That is $M(L,k)$ is the mixed Hodge cdga quotient such that $H^{i}(M(L,k)) = H^{k}(L)$ for $i =k$ and zero otherwise. Taking the pullback on each rows we then have a short exact sequence of mixed Hodge structure
\[
0 \longrightarrow \MI{k+1}{X} \longrightarrow \MI{k}{X} \longrightarrow M(L,k) \longrightarrow 0.
\]
This short exact sequence induces a long exact sequence of mixed Hodge structure and extends to arbitrary perversities. That is we have
\begin{cor}
Let $X \in \super$ with only isolated singularities and two perversities $\overline{p} \leq \overline{q} \in \widehat{\Pos{n}}^{op}$. We have a long exact sequence of mixed Hodge structures
\[
\cdots \rightarrow \HI{p}{i}{X} \rightarrow \HI{q}{i}{X} \rightarrow H^{i}(M(L,q,p)) \rightarrow \HI{p}{i+1}{X} \rightarrow \cdots
\]
where 
\[
H^{i}(M(L,q,p)) =
\begin{cases}
H^{i}(L) & q \leq i < p, \\
0        & \text{otherwise.}
\end{cases}
\]
\end{cor}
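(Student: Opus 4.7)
The plan is to define $M(L,q,p)$ as the mixed Hodge cdga quotient $\MI{q}{X}/\MI{p}{X}$, which makes sense for two reasons: first, the inclusion $\MI{p}{X} \hookrightarrow \MI{q}{X}$ induced by the poset relation $\overline{p} \le \overline{q}$ is level-wise injective and strictly compatible with both the weight and Hodge filtrations (this is inherited from the cotruncation injection $\dgaucotr{p}{M(L)(t,dt)} \hookrightarrow \dgaucotr{q}{M(L)(t,dt)}$, which by the proof of Lemma~\ref{lem:coperv_alg} is the identity outside the cut-off degrees and a canonical inclusion there); second, the category of mixed Hodge structures is abelian. The base case $\overline{p} = \overline{q+1}$ of this construction recovers the preceding $M(L,q)$. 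The long exact sequence then comes for free: the resulting short exact sequence
\[
0 \longrightarrow \MI{p}{X} \longrightarrow \MI{q}{X} \longrightarrow M(L,q,p) \longrightarrow 0
\]
of coperverse mixed Hodge cdga's yields a long exact sequence of mixed Hodge structures in cohomology, because the snake lemma takes place in the abelian category of MHS and strictness of MHS morphisms ensures that the connecting homomorphism is itself a morphism of MHS.

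The remaining task is to compute $H^i(M(L,q,p))$. A convenient route is induction on the integer distance $p-q$: the base case is the preceding statement for $M(L,q)$, and the inductive step uses the short exact sequence $0 \to M(L,p-1) \to M(L,q,p) \to M(L,q,p-1) \to 0$ derived from the filtration $\MI{p}{X} \subset \MI{p-1}{X} \subset \MI{q}{X}$. Since by induction the cohomologies of the two outer terms are concentrated in disjoint degree ranges, the associated long exact sequence reduces, modulo a single connecting morphism, to short exact sequences that paste together into the claimed identification $H^i(M(L,q,p)) = H^i(L)$ for $q \le i < p$, and zero otherwise.

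The main obstacle is to verify that the one connecting morphism not immediately ruled out for degree reasons is in fact zero; this is not abstract nonsense and requires tracing through the explicit description of the differential on the cotruncation quotient, exploiting the defining property of the sections $\coim{r}$ so that coboundary representatives in successive degrees do not mix. A cleaner alternative, bypassing this induction entirely, is to observe that the snake lemma applied to the surjections $\delta_1: \dgaucotr{r}{M(L)(t,dt)} \twoheadrightarrow M(L)$ produces a canonical isomorphism of mixed Hodge cdga's $M(L,q,p) \cong \dgaucotr{q}{M(L)(t,dt)}/\dgaucotr{p}{M(L)(t,dt)}$, whose cohomology can then be read off directly from the explicit formula for $\dgaucotr{k}{A(t,dt)}$.
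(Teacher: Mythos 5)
Your proposal follows essentially the same route as the paper: the paper obtains the adjacent-perversity short exact sequence $0 \to \MI{k+1}{X} \to \MI{k}{X} \to M(L,k) \to 0$ by taking column-wise quotients of the defining pullback diagrams (so $M(L,k)$ is exactly the quotient of cotruncations you describe), deduces the long exact sequence of mixed Hodge structures from the abelianness of the category of MHS, and then simply asserts that this ``extends to arbitrary perversities.'' Your identification $M(L,q,p) \cong \dgaucotr{q}{M(L)(t,dt)}/\dgaucotr{p}{M(L)(t,dt)}$ together with the direct computation of its cohomology is precisely the content the paper leaves implicit in that last assertion, so the argument is correct and matches the paper's.
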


\section{Weight spectral sequence}
\label{section:Weight_SS}

Let $(B,W,F)$ a mixed Hodge cdga, then $(B(t,dt),W \ast \sigma,F \ast t)$ is again a mixed Hodge cdga where the filtrations are given by
\[
(W \ast \sigma)_{m}B(t,dt)^{n} := W_{m}B^{n} \otimes \mathbf{Q}[t] \oplus W_{m+1}B^{n-1} \otimes \mathbf{Q}[t]dt
\]
and
\[
(F \ast t)^{k}B(t,dt) := F^{k}B \otimes \mathbf{C}(t,dt).
\]

The graded subspace associated to the the weight filtration is then given by
\[
\gr{m}{W \ast \sigma}{B(t,dt)^{n}} = \gr{m}{W}{B^{n}} \otimes \mathbf{Q}[t]  \oplus \gr{m+1}{W}{B^{n-1}} \otimes \mathbf{Q}[t]dt.
\]

Given a mixed Hodge cdga $(B,W,F)$, we then have a cohomological weight spectral sequence $E(B,W)$ whose $E_{1}$ page is defined by
\[
\E{W}{1}{p}{q}{B} := H^{p+q}(\gr{-p}{W}{B^{p+q}}).
\]
The spectral sequence associated to a coperverse filtered cdga $(A_{\overline{\bullet}}, W)$ is compatible with the multiplicative structure. Thus, for all $r \geq 0$, The term $E_{r}(A_{\overline{\bullet}}, W)$ is a coperverse bigraded algebra with differential $d_{r}$ of degree $(r,1-r)$.

\begin{lem}
\label{lem:E_tdt_commutes}
Let $(B,W,F)$ a mixed Hodge cdga, we have a canonical isomorphism of differential bigraded algebras
\[
E_{1}(B(t,dt), W \ast \sigma) \cong E_{1}(B,W)(t,dt)
\]
\end{lem}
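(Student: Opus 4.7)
I would prove this isomorphism by direct computation, first at the level of the underlying bigraded vector spaces and then checking compatibility with the differential and product. Using the explicit formula for the graded piece of the convolution filtration recorded just above the lemma, the $E_0$-page of the left-hand side splits as
\[
\gr{-p}{W \ast \sigma}{B(t,dt)^{p+q}} = \gr{-p}{W}{B^{p+q}} \otimes \mathbf{Q}[t] \oplus \gr{-p+1}{W}{B^{p+q-1}} \otimes \mathbf{Q}[t]dt.
\]
The key observation is that on these graded pieces the induced differential $d_0$ reduces to $d \otimes \mathrm{id}$ on each summand. Although Leibniz produces a cross term $(-1)^{|b|} k\, b \otimes t^{k-1}dt$ in $d(b \otimes t^k)$, this term vanishes on the graded quotient: it views $b \in W_{-p}$ inside $\gr{-p+1}{W}{B} = W_{-p+1}/W_{-p}$, where its class is zero.

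Taking $H^{p+q}$ of the $E_0$-complex then yields
\[
\E{W \ast \sigma}{1}{p}{q}{B(t,dt)} = \E{W}{1}{p}{q}{B} \otimes \mathbf{Q}[t] \oplus \E{W}{1}{p-1}{q}{B} \otimes \mathbf{Q}[t]dt.
\]
Endowing $\mathbf{Q}(t,dt)$ with the bigrading placing $t$ in bidegree $(0,0)$ and $dt$ in bidegree $(1,0)$, the right-hand side $(E_1(B,W) \otimes \mathbf{Q}(t,dt))^{p,q}$ matches this expression tautologically, yielding the isomorphism of bigraded vector spaces. Compatibility with the product is automatic, since the multiplication on $B(t,dt) = B \otimes_{\mathbf{Q}} \mathbf{Q}(t,dt)$ is by construction a tensor product, which descends to the graded pieces and hence to cohomology.

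The remaining point is compatibility with $d_1$. Computing $d_1$ on the left-hand side by the standard lift--apply $d$--reduce recipe, but now reducing modulo $(W \ast \sigma)_{-p-2}$, the Leibniz cross term reappears: this time $(-1)^{p+q} k\, b \otimes t^{k-1}dt$ lives in $\gr{-p-1+1}{W}{B^{p+q}} = \gr{-p}{W}{B^{p+q}}$, where the class of $b$ is no longer forced to vanish. One obtains
\[
d_1\bigl[\overline{b \otimes t^k}\bigr] = d_1(\bar b) \otimes t^k + (-1)^{p+q} k\, \bar b \otimes t^{k-1}dt,
\]
which is precisely the Leibniz extension on $E_1(B,W) \otimes \mathbf{Q}(t,dt)$ of $d_1^{(B,W)}$ and the de Rham differential $t \mapsto dt$. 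The main subtlety throughout is this interplay between the $E_0$-level, where the Leibniz cross term vanishes, and the $E_1$-level, where it reappears because one now reduces one filtration step deeper; keeping straight which graded quotient $b$ represents a nonzero class in at each level is the only real bookkeeping challenge.
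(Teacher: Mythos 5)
The paper states this lemma without proof (implicitly deferring to the analogous computation in the cited work of Chataur--Cirici), so there is no argument of the paper's to compare against; your direct verification is correct and is exactly the computation the statement requires. In particular you handle the one genuinely delicate point properly: the Leibniz cross term $(-1)^{|b|}k\,b\otimes t^{k-1}dt$ drops the convolution filtration by one step, so it vanishes on $\gr{-p}{W \ast \sigma}{B(t,dt)}$ (making $d_{0}$ split as $d\otimes\mathrm{id}$ and giving the K\"unneth identification of $E_{1}$-terms) and then reappears at the $E_{1}$-level as the de Rham part of $d_{1}$, matching the Leibniz differential on $E_{1}(B,W)\otimes\mathbf{Q}(t,dt)$ with $t$ in bidegree $(0,0)$ and $dt$ in bidegree $(1,0)$.
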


\begin{lem}
\label{lem:pb_E1_commutes}
Let $f \col (A,W,F) \rightarrow (B,W,F)$ be a morphism of coperverse mixed Hodge cdga's. There is a quasi-isomorphism of coperverse differential bigraded algebras 
\[
E_{1}(\pb{\bullet}{f},W) \overset{\sim}{\longrightarrow} \pb{\bullet}{(E_{1}(f,W))}.
\]
\end{lem}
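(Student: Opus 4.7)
The approach is to realize both sides as the $E_{1}$ term of a natural short exact sequence of mixed Hodge complexes arising from the pullback, and then to compare them via Lemma \ref{lem:E_tdt_commutes}. The evaluation $\delta_{1} \col \dgaucotr{p}{B(t,dt)} \to B$ is strictly compatible with the filtrations $W \ast \sigma$ and $F \ast t$ and is degreewise surjective, so the pullback produces a short exact sequence of mixed Hodge cdga's
\[
0 \longrightarrow \ker \delta_{1} \longrightarrow \pb{p}{f} \longrightarrow A \longrightarrow 0,
\]
natural in $\overline{p}$, and an analogous sequence holds for $\pb{p}{(E_{1}(f,W))}$ after replacing $B$ and $f$ with $E_{1}(B,W)$ and $E_{1}(f,W)$.

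Next, the functor $E_{1}(\cdot, W)$ is exact on mixed Hodge complexes: this follows from the strict compatibility of the differential with $F$ on $\gr^{W}$ and the fact that the category of mixed Hodge structures is abelian. Applying this functor to the first short exact sequence produces a short exact sequence of coperverse differential bigraded algebras, which I compare with the short exact sequence associated to $\pb{\bullet}{(E_{1}(f,W))}$. The two right-hand terms are canonically equal to $E_{1}(A,W)$, and the map between the middle terms is the natural comparison map of the statement.

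The problem therefore reduces to identifying the kernels. Lemma \ref{lem:E_tdt_commutes} supplies a canonical isomorphism $E_{1}(B(t,dt), W\ast\sigma) \cong E_{1}(B,W)(t,dt)$, and by inspection of the definitions this isomorphism restricts to
\[
E_{1}\bigl(\dgaucotr{p}{B(t,dt)}, W\ast\sigma\bigr) \cong \dgaucotr{p}{E_{1}(B,W)(t,dt)}
\]
and intertwines the two $\delta_{1}$'s. The only mildly delicate point is to verify this identification at the cut-off degree $p$, where the exceptional summand $\coim{p}$ enters the definition of $\dgaucotr{p}{\cdot}$; this amounts to a direct check on generators. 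The five lemma then yields the desired isomorphism (hence a quasi-isomorphism) of coperverse differential bigraded algebras, naturally in $\overline{p}$.
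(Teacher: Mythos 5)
Your overall architecture --- realizing both sides via the pullback along the surjection $\delta_{1}$, invoking exactness of $E_{1}$ on mixed Hodge complexes, and concluding with the five lemma --- is essentially the strategy the paper inherits from Chataur--Cirici. The genuine gap is in the kernel identification. You assert that the isomorphism of Lemma \ref{lem:E_tdt_commutes} restricts to an isomorphism
\[
E_{1}\bigl(\dgaucotr{p}{B(t,dt)}, W\ast\sigma\bigr) \cong \dgaucotr{p}{E_{1}(B,W)(t,dt)},
\]
with the cut-off degree left as ``a direct check on generators''. The paper's proof consists precisely of the observation that this comparison map is \emph{not} an isomorphism but only a quasi-isomorphism; that is the one point it adds to the citation. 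The obstruction sits exactly where you locate the ``mildly delicate point'': on the left, the exceptional summand comes from $\coim{p}$, a chosen complement of $\ker d_{B}^{p}$ inside $B^{p}$, to which one then applies the functor $E_{1}$ (i.e.\ cohomology of $\mathrm{gr}^{W}$); on the right, it is a complement of $\ker d_{1}$ chosen inside $E_{1}(B,W)$ in total degree $p$. Since $E_{1}(B,W)$ is a subquotient of $B$, a complement of $\ker d_{B}$ does not pass to a complement of $\ker d_{1}$, and the two sides in general differ already as bigraded vector spaces in total degrees $p$ and $p+1$; more generally, $E_{1}$ of a sub-cdga with the induced filtration is not the corresponding sub-object of $E_{1}$. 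So the direct check you defer would fail to produce an isomorphism.

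The skeleton can be salvaged: the five lemma (applied to the long exact cohomology sequences of your two short exact sequences) only needs the kernel comparison to be a \emph{quasi}-isomorphism. But establishing that quasi-isomorphism at the cut-off degree is the actual content of the lemma, and your proposal leaves it unproved. Symptomatically, your conclusion of an ``isomorphism (hence a quasi-isomorphism)'' of coperverse differential bigraded algebras overshoots the statement, which claims only a quasi-isomorphism precisely because an isomorphism is not available.
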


\begin{proof}
The proof is similar to \cite[lemma 3.7]{Chataur} unless for the map 
\[
E_{1}(\dgaucotr{\bullet}{B(t,dt)}, W \ast \sigma) \overset{\sim}{\rightarrow} \dgaucotr{\bullet}{E_{1}(B,W)(t,dt)}
\]
which is not an isomorphism but a quasi-isomorphism.
\end{proof}

\begin{lem}
\label{lem:A_to_E1}
Let $(A_{\overline{\bullet}}, W, F)$ be a coperverse mixed Hodge cdga such that 
\[
d(W_{p}A_{\overline{\bullet}}) \subset W_{p-1}A_{\overline{\bullet}}.
\]
There is an isomorphism of complex coperverse cdga's 
\[
A_{\overline{\bullet}} \otimes \mathbf{C} \cong E_{1}(A_{\overline{\bullet}} \otimes \mathbf{C}, W).
\]
\end{lem}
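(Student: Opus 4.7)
The hypothesis $d(W_p A_{\overline{\bullet}}) \subset W_{p-1} A_{\overline{\bullet}}$ forces the induced differential $d_0$ on $E_0^{p,q}(A,W) = \mathrm{gr}^W_{-p} A^{p+q}$ to vanish. Hence $E_1^{p,q}(A \otimes \mathbf{C}, W) \cong \mathrm{gr}^W_{-p} A^{p+q} \otimes \mathbf{C}$, and the spectral-sequence differential $d_1$ is nothing but the map induced by $d$ on the associated graded, which is well defined precisely because $d$ lowers $W$ strictly by one. Thus, as a complex coperverse cdga, $E_1(A \otimes \mathbf{C}, W) \cong (\mathrm{gr}^W A \otimes \mathbf{C}, \bar{d})$ endowed with its inherited product and poset maps.

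The next step is to produce a splitting of $W$ on $A \otimes \mathbf{C}$. A direct computation using the hypothesis shows that $\mathrm{Dec}(W)_p A^n = W_{p-n} A^n$, so that $\mathrm{gr}^{\mathrm{Dec}(W)}_p A^n \otimes \mathbf{C} = \mathrm{gr}^W_{p-n} A^n \otimes \mathbf{C}$. Since $(A^n, \mathrm{Dec}(W), F)$ is a mixed Hodge structure, Deligne's canonical decomposition
\[
A^n \otimes \mathbf{C} = \bigoplus_{p,q} I^{p,q}(A^n),
\]
in which $I^{p,q}(A^n)$ projects isomorphically onto $\mathrm{gr}^{\mathrm{Dec}(W)}_{p+q} A^n \otimes \mathbf{C}$, yields a linear splitting $A^n \otimes \mathbf{C} \cong \bigoplus_r \mathrm{gr}^W_r A^n \otimes \mathbf{C}$. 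Summing over $n$ produces the desired vector-space isomorphism $A \otimes \mathbf{C} \cong E_1(A \otimes \mathbf{C}, W)$.

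To promote it to an isomorphism of complex coperverse cdga's, I invoke the functoriality of the Deligne splitting: any morphism of mixed Hodge structures sends $I^{p,q}$ of the source into $I^{p,q}$ of the target. Applied to $d$, $\mu$ and the poset maps $\pomap{p}{q}$, which are all morphisms of mixed Hodge structures by the definition of a coperverse mixed Hodge cdga, this shows that the splitting intertwines $d$ with $d_1$, the product on $A$ with the induced product on $\mathrm{gr}^W A$, and commutes with poset maps.

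The main obstacle is keeping track of the various indexings (cohomological degree $n$, weight $m$, Deligne bidegree $(p,q)$) and verifying that the differential induced by $d$ through the Deligne splitting coincides with the spectral-sequence differential $d_1$; once this identification is performed on one representative, the remaining verifications reduce to the naturality of the Deligne splitting with respect to morphisms of mixed Hodge structures.
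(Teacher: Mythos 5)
Your proof is correct and follows essentially the same route as the paper, which simply defers to the analogous statement in Chataur--Cirici (\cite[lemma 3.4]{Chataur}): there too the argument is that $d(W_{p})\subset W_{p-1}$ kills $d_{0}$ so that $E_{1}=\mathrm{gr}^{W}$, that $\Dec{W}_{p}A^{n}=W_{p-n}A^{n}$, and that the functorial (and tensor-compatible) Deligne splitting $I^{p,q}$ of each mixed Hodge structure $(A^{n}_{\overline{p}},\Dec{W},F)$ supplies a bigraded decomposition of $A_{\overline{\bullet}}\otimes\mathbf{C}$ intertwining $d$, $\mu$ and the poset maps with their induced counterparts on $E_{1}$. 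You have in effect written out the details that the paper leaves to the citation.
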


\begin{proof}
The proof is is the same as the proof of \cite[lemma 3.4]{Chataur} for perverse mixed Hodge cdga's.
\end{proof}

Let $(A,W)$ be a filtered cdga of finite type over a field $\mathbf{k}$ and $\mathbf{k} \subset \mathbf{K}$ a field extension. By \cite[theorem 2.26]{Cirici2014} we have that $A \cong E_{r}(A,W)$ if and only if $A \otimes_{\mathbf{k}} \mathbf{K} \cong E_{r}(A \otimes_{\mathbf{k}} \mathbf{K},W)$. For a coperverse cdga of finite type the same proof is valid. This implies the isomorphism of lemma \ref{lem:A_to_E1} descends to an isomorphism over $\mathbf{Q}$.

Let $X \in \super$ of complex dimension $n$. The inclusion $i \col L \hookrightarrow X_{reg}$ of the link into the regular part induces a morphism of multiplicative weight spectral sequence $E_{1}(i^{\ast}) \col E_{1}(X_{reg}) \rightarrow E_{1}(L)$. We define 
\[
\EI{1}{\bullet}{X} := \pb{\bullet}{E_{1}(i^{\ast})}.
\]
This is a coperverse differential bigraded algebra whose cohomology satifies
\[
\EIbidg{2}{p}{r,s}{X} := H^{r,s}(\EI{1}{p}{X}) \cong \gr{s}{W}{\HI{p}{r+s}{X}}
\]

\begin{defi}
Let $X \in \super$ of complex dimension $n$. The spectral sequence $\EI{1}{\bullet}{X}$ defined by
\[
\EI{1}{\bullet}{X} := \pb{\bullet}{E_{1}(i^{\ast})}
\]
is called the coperverse weight spectral sequence associated to $X$.
\end{defi}

In \cite[theorem 3.12]{Chataur}, Chataur and Cirici prove the existence of a quasi-isomorphism between the rational perverse model $\IA{\bullet}{X}$ of a complex projective variety with only isolated singularities and the first term of its perverse weight spectral sequence $\IE{1}{\bullet}{X}$. This theorem can be modified to get the following one. 

\begin{thm}
\label{thm:AI_to_EI1}
Let $X \in \super$ with only isolated singularities. There is a string of quasi-isomorphisms of coperverse cdga's from $\MI{\bullet}{X} \otimes \mathbf{C}$ to $\EI{1}{\bullet}{X} \otimes \mathbf{C}$. In particular, there is an isomorphism in $\mathrm{Ho}(\cdga{C})$ from $AI_{\overline{\bullet}}(X) \otimes \mathbf{C}$ to $\EI{1}{\bullet}{X} \otimes \mathbf{C}$.
\end{thm}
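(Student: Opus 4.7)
The plan is to realise the zig-zag by chaining the three preceding lemmas together with the fact that $\tilde{\iota}\col M(X_{reg}) \to M(L)$ is, by construction, a mixed Hodge model of the rational morphism $i^{\ast}$ induced by the inclusion $L\hookrightarrow X_{reg}$. The hypothesis of Lemma \ref{lem:A_to_E1} is exactly item (4) of Theorem \ref{thm:coperverse_MI}, which gives us the first bridge, and Lemma \ref{lem:pb_E1_commutes} provides the compatibility with the pullback construction $\pb{\bullet}{-}$ that defines both $\MI{\bullet}{X}$ and $\EI{1}{\bullet}{X}$.

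Concretely, first I would apply Lemma \ref{lem:A_to_E1} to $\MI{\bullet}{X}$, whose differential decreases the weight by one, to obtain an isomorphism of complex coperverse cdga's
\[
\MI{\bullet}{X}\otimes\mathbf{C}\;\cong\;E_{1}(\MI{\bullet}{X}\otimes\mathbf{C},W).
\]
Since $\MI{\bullet}{X}=\pb{\bullet}{\tilde{\iota}}$, Lemma \ref{lem:pb_E1_commutes} next yields a quasi-isomorphism of coperverse dgas
\[
E_{1}(\pb{\bullet}{\tilde{\iota}}\otimes\mathbf{C},W)\;\overset{\sim}{\longrightarrow}\;\pb{\bullet}{E_{1}(\tilde{\iota}\otimes\mathbf{C},W)},
\]
the proof of which internally uses Lemma \ref{lem:E_tdt_commutes} to commute $E_{1}$ past the $(t,dt)$-construction inside $\dgaucotr{\bullet}{-}$. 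Finally, the commutative diagram of mixed Hodge diagrams displayed in the proof of Theorem \ref{thm:coperverse_MI} produces a zig-zag of filtered quasi-isomorphisms between $\tilde{\iota}$ and $i^{\ast}\col \Apl{X_{reg}}\to\Apl{L}$; applying $E_{1}(-,W)$ (which by definition transforms filtered quasi-isomorphisms into quasi-isomorphisms) and then $\pb{\bullet}{-}$ produces a zig-zag
\[
\pb{\bullet}{E_{1}(\tilde{\iota}\otimes\mathbf{C},W)}\;\simeq\;\pb{\bullet}{E_{1}(i^{\ast})}\otimes\mathbf{C}\;=\;\EI{1}{\bullet}{X}\otimes\mathbf{C}.
\]
Concatenating these three steps yields the required string from $\MI{\bullet}{X}\otimes\mathbf{C}$ to $\EI{1}{\bullet}{X}\otimes\mathbf{C}$.

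The \emph{in particular} clause is then immediate: Theorem \ref{thm:coperverse_MI} supplies the string $\MI{\bullet}{X}\leftarrow\ast\rightarrow\AI{\bullet}{X}$ of quasi-isomorphisms of coperverse cdga's, so after $-\otimes\mathbf{C}$ we obtain an isomorphism $\AI{\bullet}{X}\otimes\mathbf{C}\cong\MI{\bullet}{X}\otimes\mathbf{C}$ in $\Ho{\cdga{C}}$, which combines with the zig-zag above to give the isomorphism with $\EI{1}{\bullet}{X}\otimes\mathbf{C}$ in $\Ho{\cdga{C}}$.

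The main obstacle is the third step, namely verifying that $\pb{\bullet}{-}$ sends the zig-zag of filtered quasi-isomorphisms supplied by the mixed Hodge diagram structure to a zig-zag of quasi-isomorphisms of coperverse cdga's. This is where the homotopical content sits: the evaluation map $\delta_{1}\col\dgaucotr{p}{A(t,dt)}\twoheadrightarrow A$ is a degreewise surjection, hence a fibration in the projective model structure of Theorem \ref{thm:model_structure}, so the defining square for $\pb{\bullet}{-}$ is a homotopy pullback and is therefore weakly invariant under pointwise quasi-isomorphism of the input map. Once this homotopy invariance is granted, what remains is a routine bookkeeping check that the various isomorphisms intertwine the coperverse structure maps $\pomap{p}{q}$ and the extended products, which is built into the constructions of $\dgaucotr{\bullet}{-}$ and $\pb{\bullet}{-}$.
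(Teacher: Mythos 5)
Your proposal is correct and follows essentially the same three-step route as the paper's own proof: apply Lemma \ref{lem:A_to_E1} using property (4) of Theorem \ref{thm:coperverse_MI}, then Lemma \ref{lem:pb_E1_commutes} to commute $E_{1}$ past $\pb{\bullet}{-}$, and finally transport the zig-zag relating $\tilde{\iota}$ to $i^{\ast}$ through $E_{1}$ and the pullback construction. Your added justification that $\pb{\bullet}{-}$ is homotopy-invariant because $\delta_{1}$ is a fibration is a detail the paper leaves implicit, but it is consistent with its argument.
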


\begin{proof}
The proof is similar to \cite[theorem 3.12]{Chataur}.

Let $(\MI{\bullet}{X}, W,F)$ be the coperverse mixed Hodge cdga given by the theorem \ref{thm:coperverse_MI}. Since the differential satisfies 
\[
d(W_{p}\MI{\bullet}{X}) \subset W_{p-1}\MI{\bullet}{X}
\]
by the lemma \ref{lem:A_to_E1} we have an isomorphism of complex coperverse cdga's $\MI{\bullet}{X} \otimes \mathbf{C} \cong E_{1}(\MI{\bullet}{X} \otimes \mathbf{C},W)$.

By construction, we have $\MI{\bullet}{X} := \pb{\bullet}{\tilde{\iota}}$, where
\[
\tilde{\iota} \col (M(X_{reg}),W,F) \rightarrow (M(L),W,F)
\]
is a morphism of mixed Hodge cdga's which computes the rational homotopy type of $\iota \col L \rightarrow X_{reg}$. Thus by lemma \ref{lem:pb_E1_commutes} we have a quasi-isomorphism of coperverse cdga's $E_{1}(\MI{\bullet}{X},W) \longrightarrow \pb{\bullet}{E_{1}(\tilde{\iota},W)}$. It remains to note that we have a string of quasi-isomorphisms from $\pb{\bullet}{{E_{1}(\tilde{\iota})}}$ to $\EI{1}{\bullet}{X} := \pb{\bullet}{E_{1}(i^{\ast})}$
\end{proof}

\begin{remark}
Suppose we have a topological space $X$ such that its rational model is endowed with an increasing filtration $W$, then one can consider the associated spectral sequence $E_{1}(X,W)$. The existence of a string of quasi-isomorphisms between the rational model of $X$ and the first page $E_{1}(X,W)$ is called the $E_{1}$-formality and is a property of complex algebraic varieties, see \cite{Cirici2014} and \cite{Chataur2015}. It is an interesting result that the intersection spaces of complex projective varieties have this property although they are not algebraic varieties.
\end{remark}

\begin{defi}
Let $X$ be a compact, connected oriented pseudomanifold of dimension $n$ with only isolated singularities. We say that $X$ is a $EI_{r,\overline{\bullet}}$-formal topological space if its coperverse rational model $\AI{\bullet}{X}$ can be endowed with an increasing bounded filtration $W$ such that there exists a string of quasi-isomorphisms between $\AI{\bullet}{X}$ and the $r$-th term of its associated spectral sequence $\EI{r}{\bullet}{X,W}$.
\end{defi}

With this definition, the theorem \ref{thm:AI_to_EI1} can be rephrased in the following corollary.

\begin{cor}
Let $X \in \super$ with only isolated singularities. The space $X$ is $EI_{1,\overline{\bullet}}$-formal with respect to the weight filtration.
\end{cor}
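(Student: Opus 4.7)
The plan is to recognise that this corollary is a translation of Theorem \ref{thm:AI_to_EI1} into the terminology of the preceding definition, so the work consists of identifying the filtration, identifying the first page of the associated spectral sequence, and descending from $\mathbf{C}$ to $\mathbf{Q}$.

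First I would endow $\AI{\bullet}{X}$ with the weight filtration. By Theorem \ref{thm:coperverse_MI} the coperverse mixed Hodge cdga $\MI{\bullet}{X}$ is linked to $\AI{\bullet}{X}$ by a zigzag of quasi-isomorphisms, and it carries a biregular, exhaustive increasing filtration $W$ such that $d(W_{p}\MI{\bullet}{X}) \subset W_{p-1}\MI{\bullet}{X}$. Transporting $W$ through the zigzag gives $\AI{\bullet}{X}$ a bounded increasing filtration (well defined in the filtered homotopy category), which is the data required by the definition of $EI_{1,\overline{\bullet}}$-formality.

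Second, I would identify the $E_{1}$-page. Since $\MI{\bullet}{X} = \pb{\bullet}{\tilde{\iota}}$ with $\tilde{\iota}$ modelling $i^{\ast} \col \Apl{X_{reg}} \to \Apl{L}$, Lemma \ref{lem:pb_E1_commutes} gives a quasi-isomorphism
\[
E_{1}(\MI{\bullet}{X}, W) \overset{\sim}{\longrightarrow} \pb{\bullet}{E_{1}(\tilde{\iota}, W)} \simeq \pb{\bullet}{E_{1}(i^{\ast})} = \EI{1}{\bullet}{X}.
\]
Combining this with Lemma \ref{lem:A_to_E1} and the string of quasi-isomorphisms of Theorem \ref{thm:AI_to_EI1} yields, over $\mathbf{C}$, a zigzag
\[
\AI{\bullet}{X} \otimes \mathbf{C} \;\simeq\; \MI{\bullet}{X} \otimes \mathbf{C} \;\cong\; E_{1}(\MI{\bullet}{X} \otimes \mathbf{C},W) \;\simeq\; \EI{1}{\bullet}{X} \otimes \mathbf{C},
\]
which is exactly $EI_{1,\overline{\bullet}}$-formality of $X$ after extension of scalars.

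The main obstacle is the final step: descent from $\mathbf{C}$ back to $\mathbf{Q}$. For this I would invoke the coperverse version of the field extension principle noted in the paragraph following Lemma \ref{lem:A_to_E1}, itself a consequence of \cite[theorem 2.26]{Cirici2014}: a finite-type filtered coperverse cdga $(A, W)$ is $E_{r}$-isomorphic to the $r$-th page of its weight spectral sequence if and only if this holds after tensoring with $\mathbf{C}$. Applied to $(\AI{\bullet}{X}, W)$, whose cohomology is of finite type, this upgrades the complex zigzag to a rational one, yielding the string of quasi-isomorphisms between $\AI{\bullet}{X}$ and $\EI{1}{\bullet}{X,W}$ required by the definition, and thereby establishes $EI_{1,\overline{\bullet}}$-formality with respect to the weight filtration.
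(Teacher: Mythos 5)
Your proposal is correct and follows essentially the same route as the paper: the paper treats this corollary as a direct rephrasing of Theorem \ref{thm:AI_to_EI1}, whose proof is exactly the chain you describe (transport of $W$ from $\MI{\bullet}{X}$, Lemma \ref{lem:A_to_E1}, Lemma \ref{lem:pb_E1_commutes}). Your final descent from $\mathbf{C}$ to $\mathbf{Q}$ via the field-extension principle recorded after Lemma \ref{lem:A_to_E1} is a legitimate extra step that the paper does not spell out for this corollary (its theorem and the related formality results are stated over $\mathbf{C}$), so it strengthens rather than alters the argument.
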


\subsection{The case of a smooth exceptional divisor}
\label{subsec:smooth_div}
\subsubsection{Notations}

Let $X$ be a complex projective variety of complex dimension $n$ with only normal isolated singularities. We denote by $\Sigma = \lbrace \sigma_{1},\dots, \sigma_{\nu} \rbrace
$ the singular locus of $X$ and by $X_{reg} := X - \Sigma$ its regular part. We also denote by $L := L(\Sigma, X)$ the link of $\Sigma$ in $X$ and by $i \col L \hookrightarrow X_{reg}$ the natural inclusion of the link into the regular part.

Since $\Sigma$ is discrete, we can write $L$ as a disjoint union $L= \sqcup_{\sigma_{i}} L_{i}$ where $L_{i}:= L(\sigma_{i}, X)$ is the link of $\sigma_{i} \in \Sigma$ in $X$. The assumption that $X$ is normal implies that $L_{i}$ is connected for all $\sigma_{i} \in \Sigma$.

From now on, we will always assume $X$ admits a resolution of singularities
\[
\begin{tikzpicture}[injection/.style={right hook->,fill=white, inner sep=2pt}]
\matrix (m)[matrix of math nodes, row sep=3em, column sep=5em, text height=2ex, text depth=0.25ex]
{ D       &  \widetilde{X}  \\
  \Sigma  &  X \\};

\path[injection]
(m-1-1) edge node[auto] {$j$} (m-1-2);
\path[injection]
(m-2-1) edge node[auto,swap] {} (m-2-2);
\path[->]
(m-1-1) edge node[auto,swap] {} (m-2-1);
\path[->]
(m-1-2) edge node[auto] {$f$} (m-2-2);
\end{tikzpicture}
\]
such that the exceptional divisor $D := f^{-1}(\Sigma)$ is smooth.

We denote by 
\[
j^{k} \col H^{k}(\widetilde{X}) \longrightarrow H^{k}(D) \text{  and  } \gamma^{k} \col H^{k-2}(D) \longrightarrow H^{k}(\widetilde{X})
\]
the restriction maps and the Gysin maps induced by the inclusion $j$. 

For all $k \geq 2$ we also denote by 
\[
j^{k}_{\sharp} \col H^{k-2}(D) \overset{\gamma^{k}}{\longrightarrow} H^{k}(\widetilde{X}) \overset{j^{k}}{\longrightarrow} H^{k}(D)
\]
the composition of the two maps.

The morphism $E_{1}(i^{\ast}) \col E_{1}^{\ast,\ast}(X_{reg}) \rightarrow E_{1}^{\ast,\ast}(L)$ of weight spectral sequence induced by the inclusion $i \col L \hookrightarrow X_{reg}$ is defined by 

\[
\begin{tikzpicture}[injection/.style={right hook->,fill=white, inner sep=2pt}]
\matrix (m)[matrix of math nodes, row sep=3em, column sep=3em, text height=2ex, text depth=0.25ex]
{E_{1}^{-1,s}(X_{reg})  & E_{1}^{0,s}(X_{reg}) & H^{s-2}(D)   &  H^{s}(\widetilde{X})  \\
 E_{1}^{-1,s}(L)        & E_{1}^{0,s}(L)       & H^{s-2}(D)   &  H^{s}(D) \\};

\path[->]
(m-1-1) edge node[auto,swap] {$E_{1}^{-1,s}(i^{\ast})$} (m-2-1);
\path[->]
(m-1-2) edge node[auto] {$E_{1}^{0,s}(i^{\ast}) \, =$} (m-2-2);
\path[->]
(m-1-3) edge node[auto,swap] {$\mathrm{id}$} (m-2-3);
\path[->]
(m-1-4) edge node[auto] {$j^{s}$} (m-2-4);

\path[->]
(m-1-1) edge node[auto] {$d$} (m-1-2);
\path[->]
(m-2-1) edge node[auto,swap] {$d$} (m-2-2);
\path[->]
(m-1-3) edge node[auto] {$\gamma^{s}$} (m-1-4);
\path[->]
(m-2-3) edge node[auto,swap] {$j^{s}_{\sharp}$} (m-2-4);
\end{tikzpicture}
\]

The algebra structure on $E_{1}^{\ast,\ast}(X_{reg})$ is given by the cup product of $H^{\ast}(\widetilde{X})$, together with the map 
\[ 
\begin{array}{ccc}
H^{s}(\widetilde{X}) \times H^{s'}(D) & \longrightarrow & H^{s+s'}(D)\\
  (x,a)                               & \longmapsto     & j^{s}(x)\cdot a.\\
\end{array}
\]
This algebra structure is compatible with the differential $\gamma$ because $\gamma(j^{s}(x)\cdot a) = x \cdot \gamma(a)$.

The non-trivial products on $E_{1}^{\ast,\ast}(L)$ are the maps
\[
E_{1}^{0,s}(L) \times E_{1}^{r,s'}(L) \longrightarrow E_{1}^{r,s+s'}(L) \quad r \in \lbrace 0,1 \rbrace, s,s' \geq 0
\]
induced by the cup-product on $H^{\ast}(D)$.

The coperverse weight spectral sequence $\EI{1}{\bullet}{X} := \pb{\bullet}{E_{1}(i^{\ast})}$ for $X$ is then given by

\noindent\makebox[\textwidth]{
\(
\begin{array}{| c || c c c c c |} 
\hline
s > p+1        & H^{s-2}(D)\otimes \mathbf{Q}[t] & \rightarrow & \mathcal{I}^{s}_{0} \oplus H^{s-2}(D)\otimes \mathbf{Q}[t]dt & \rightarrow & H^{s}(D)\otimes \mathbf{Q}[t]dt  \\
\hline
s = p+1        & \coim{p} \oplus H^{s-2}(D)\otimes \mathbf{Q}[t]t & \rightarrow & \mathcal{I}^{s}_{0} \oplus H^{s-2}(D)\otimes \mathbf{Q}[t]dt & \rightarrow & H^{s}(D)\otimes \mathbf{Q}[t]dt  \\
\hline
1 \leq s < p+1 & H^{s-2}(D)\otimes \mathbf{Q}[t]t & \rightarrow & \mathcal{I}^{s}_{1} \oplus H^{s-2}(D)\otimes \mathbf{Q}[t]dt & \rightarrow & H^{s}(D)\otimes \mathbf{Q}[t]dt  \\
\hline
\hline
s = 0          & 0                                            & & \mathcal{I}^{0}_{0} & \rightarrow & H^{0}(D)\otimes \mathbf{Q}[t]dt  \\
\hline
\hline
\EIbidg{1}{p}{r,s}{X}               & r=-1                                         &             & r=0                              &             & r=1 \\
\hline 
\end{array}  
\)}

Where
\begin{enumerate}
\item $\coim{p}$ is the image of the section of $d^{-1,s}_{1} \col E^{-1,s}_{1}(L) \rightarrow E^{0,s}_{1}(L)$, ie a section of $j^{s}_{\sharp} \col H^{s-2}(D) \rightarrow H^{s}(D)$. Note that $\coim{p}$ is just a computational tool and does not impact the value of the $EI_{2}$ term since it has been shown in \cite[theorem 2.18]{Banagl2010} that the values of $\HI{p}{k}{X}$ for rational coefficients are independent of the choices made during the construction.
\item $\mathcal{I}^{s}_{k}$, $k \in \lbrace 0,1 \rbrace$, is the vector space given by the following pullback square.
\[
\begin{tikzpicture}
\matrix (m)[matrix of math nodes, row sep=3em, column sep=5em, text height=2ex, text depth=0.25ex]
{ \mathcal{I}^{s}_{k}  &  H^{s}(D)\otimes \mathbf{Q}[t]t^{k}  \\
  H^{s}(\widetilde{X}) &  H^{s}(D) \\};

\path[->]
(m-1-1) edge node[auto] {} (m-1-2);
\path[->]
(m-2-1) edge node[auto] {$j^{s}$} (m-2-2);
\path[->]
(m-1-1) edge node[above=1em, right=1em] {$\ulcorner$} (m-2-1);
\path[->]
(m-1-2) edge node[auto] {$\delta_{1}$} (m-2-2);
\end{tikzpicture}
\]
\item The differential $\difb{p}{-1,s} \col \EIbidg{1}{p}{-1,s}{X} \rightarrow \EIbidg{1}{p}{0,s}{X}$ is defined by 
\[
\sum a_{i}t^{i} \mapsto \left((\sum \gamma^{s}(a_{i}), \sum j^{s}_{\sharp}(a_{i})t^{i}), \sum i a_{i} t^{i-1}dt \right) \quad a_{i} \in H^{s-2}(D).
\]
\item The differential $\difb{p}{0,s} \col \EIbidg{1}{p}{0,s}{X} \rightarrow \EIbidg{1}{p}{1,s}{X}$ is defined by 
\[
\left((x, \sum a_{i}t^{i}), \sum b_{i} t^{i}dt \right) \mapsto \sum ia_{i}t^{i-1}dt + \sum j^{s}_{\sharp}(b_{i})t^{i}dt
\]
with
\[
\begin{cases}
a_{i} \in H^{s}(D), b_{i} \in H^{s-2}(D), & \\
x \in H^{s}(\widetilde{X}), j^{s}(x) = \sum a_{i}. & \\
\end{cases}
\]
\end{enumerate} 

We describe the internal algebra structure of the coperverse weight spectral sequence $\EIbidg{1}{p}{r,s}{X}$. Due to the method of construction, this algebra structure is similar to the external one on the perverse weight spectral sequence for intersection cohomology in \cite{Chataur}.

The algebra structure is described by the following maps. We set $x,x' \in H^{\ast}(\widetilde{X})$ and $a,a',b,b' \in H^{\ast}(D)\otimes \mathbf{Q}[t]$.
\[ 
\begin{array}{ccc}
\EIbidg{1}{p}{0,s}{X} \times \EIbidg{1}{p}{0,s'}{X} & \longrightarrow & \EIbidg{1}{p}{0,s+s'}{X} \\
((x,a+b\cdot dt),(x',a' + b' \cdot dt))             & \longmapsto     & (xx', aa' + (a'b + b'a)dt) \\
\end{array}
\]
\[ 
\begin{array}{ccc}
\EIbidg{1}{p}{0,s}{X} \times \EIbidg{1}{p}{1,s'}{X} & \longrightarrow & \EIbidg{1}{p}{1,s+s'}{X} \\
((x,a+b\cdot dt),(a' \cdot dt))                     & \longmapsto     & aa' \cdot dt \\
\end{array}
\]
\[ 
\begin{array}{ccc}
\EIbidg{1}{p}{-1,s}{X} \times \EIbidg{1}{p}{1,s'}{X} & \longrightarrow & \EIbidg{1}{p}{0,s+s'}{X} \\
(a,a'\cdot dt)                                       & \longmapsto     & aa' \cdot dt \\
\end{array}
\]
\[ 
\begin{array}{ccc}
\EIbidg{1}{p}{-1,s}{X} \times \EIbidg{1}{p}{0,s'}{X} & \longrightarrow & \EIbidg{1}{p}{-1,s+s'}{X} \\
(a,(x,a' + b' \cdot dt))                             & \longmapsto     & aa' \\
\end{array}
\]

Note that since $\coim{p} \subset H^{s-2}(D)$ and $\mathcal{I}^{s}_{1} \subset \mathcal{I}^{s}_{0}$, $\pomap{k+1}{k}$ induces a morphism of spectral sequences of bidegree $(0,0)$
\[
EI_{1}(\pomap{k+1}{k}) \col \EI{1}{k+1}{X} \rightarrow \EI{1}{k}{X}.
\]
This poset map extends the internal structure structure into an external one, meaning we then have an extended product
\[
\EIbidg{1}{p}{r,s}{X} \times \EIbidg{1}{q}{r',s'}{X} \longrightarrow \EIbidg{1}{q}{r+r',s+s'}{X}
\]
defined with the same map as before for the internal structure and following the same rules for $r,r',s,s'$.

By computing the cohomology of $\EI{1}{p}{X}$ we have
\[
\begin{array}{| c || c c c |} 
\hline
s > p+1        & \ker \gamma^{s} &  \coker{\gamma^{s}} &  0  \\
\hline
s = p+1        & 0               &  \coker{\gamma^{s}_{|\coim{p}}} & 0  \\
\hline
1 \leq s < p+1 & 0               &  \ker j^{s}         & \coker{j^{s}}  \\
\hline
\hline
s = 0          & 0               & H^{0}(\widetilde{X})          & 0 \\
\hline
\hline
\EIbidg{2}{p}{r,s}{X} & r=-1            & r=0                              & r=1 \\
\hline 
\end{array}  
\]

Where $\gamma^{s}_{|\coim{p}}$ is the restriction of $\gamma^{s}$ to 
\[
\coim{p} \rightarrow H^{s}(\widetilde{X}).
\]

We then have the following isomorphisms

\noindent\makebox[\textwidth]{
\(
\HI{p}{k}{X} =
\begin{cases}
H^{0}(\widetilde{X}) = \mathbf{Q} & k=0 \\
H^{k}(X) \cong  \ker j^{k} \oplus \coker{j^{k-1}} & 1 \leq k < p+1 \\
H^{k}(X) \oplus \im{H^{k}(X_{reg}) \rightarrow H^{k}(L)} \cong \coker{\gamma^{k}_{|\coim{p}}} \oplus \coker{j^{k-1}} \oplus \ker \gamma^{k+1} & k = p+1 \\
H^{k}(X_{reg}) \cong \ker \gamma^{k+1} \oplus \coker{\gamma^{k}} & k > p+1
\end{cases}
\)}

\subsubsection{Remark on \texorpdfstring{$\coker{j^{0}}$}{coker j0}}
\label{subsubsec:coker}
It is important to note here that the values of $\ker j^{s}$ and $\coker{j^{s}}$ recorded in the array of the $EI_{2}$ term above start with $s=1$, meaning we don't take into account $\ker j^{0}$ and $\coker{j^{0}}$, this is intended.

Indeed, $\coker{j^{0}}$ accounts for the number of loops created when the intersection spaces are defined as a homotopy pushout over a single point, like in the original definition of \cite{Banagl2010}, this not the definition we use.

As a consequence, when we have multiple isolated singularities, the generalised Poincaré duality of the intersection spaces fails for $\redHI{p}{1}{X} \cong \redHI{q}{n-1}{X}$. 

This is also one of the reason we modified the definition of intersection spaces. If we used the original definition of \cite{Banagl2010}, the mixed Hodge structure on $\HI{p}{k}{X}$ would never be pure unless if there is only one isolated singularity, which is the case where $\coker{j^{0}}=0$.

\subsubsection{Remark on the zero perversity} 
The intersection space for the zero perversity is by definition \ref{def:intersectionspace2} the regular part $X_{reg}$ of the complex projective variety $X \in \super$ involved. The isomorphism given above by the $EI_{2}$ term gives 
\[
\HI{0}{1}{X} = \coker{\gamma^{1}_{|\coim{0}}} \oplus \ker \gamma^{2}.
\]
Let's see that this coincides with $H^{1}(X_{reg})$.

Consider the term $\coker{\gamma^{1}_{|\coim{0}}}$, by definition $\coim{0}$ is defined as the image of a section of $j^{0}_{\sharp} \col H^{-1}(D)=0 \rightarrow H^{1}(D)$. So we have $\coim{0}=0$, and we then have $\coker{\gamma^{1}_{|\coim{0}}} \cong \coker{\gamma^{1}}.$

We then have what we wanted
\[
\HI{0}{1}{X} = \coker{\gamma^{1}} \oplus \ker \gamma^{2} = H^{1}(X_{reg}).
\]

\subsection{\texorpdfstring{$(\overline{p},r)$}{(p,r)}-purity implies \texorpdfstring{$(\overline{p},r)$}{(p,r)}-formality}

\begin{defi}
Let $0 \leq r \leq \infty$ be an integer and $\overline{p}$ a perversity. A morphism of coperverse cdga's $f_{\overline{\bullet}} \col A_{\overline{\bullet}} \rightarrow B_{\overline{\bullet}}$ is a $(\overline{p},r)$-quasi-isomorphism if for all perversities $\overline{s} \leq \overline{p}$ in $\mathcal{P}^{op}$ the induced morphism
\[
H_{\overline{s}}^{i}(f) \col H_{\overline{s}}^{i}(A) \longrightarrow H_{\overline{s}}^{i}(B)
\]
is an isomorphism for all $i \leq r$ and a monomorphism for $i = r+1$.
\end{defi}

\begin{defi}
\begin{enumerate}
\item A coperverse cdga $(A_{\overline{\bullet}},d)$ over $\mathbf{k}$ is said to be $(\overline{p},r)$-formal if there exist a string of $(\overline{p},r)$-quasi-isomorphisms from $(A_{\overline{\bullet}},d)$ to its cohomology $(H_{\overline{\bullet}}(A,\mathbf{k}),0)$ seen as a coperverse cdga with zero differential.
\item Let $X \in \super$, $\I{\bullet}{X}$ is said to be $(\overline{p},r)$-formal if its coperverse rational model $AI_{\overline{\bullet}}(X)$ is $(\overline{p},r)$-formal.
\item Let $X \in \super$, $\I{\bullet}{X}$ is said to be $(\overline{p},r)$-pure if the weight filtration $\HI{s}{k}{X}$ is pure of weight $k$ for all $k \leq r$ and for all perversities $\overline{s} \leq \overline{p}$ in $\mathcal{P}^{op}$.
\end{enumerate}
\end{defi}

\begin{thm}\label{thm:pure_is_formal}
Let $X \in \super$ of dimension $n$ with only isolated singularities. Let $r \geq 0$ be an integer and $\overline{p}$ a perversity. Suppose that $\I{\bullet}{X}$ is $(\overline{p},r)$-pure, then $\I{\bullet}{X}$ is $(\overline{p},r)$-formal. 
\end{thm}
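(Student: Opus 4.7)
The plan is to adapt the ``purity implies formality'' strategy of Chataur--Cirici (applied to intersection cohomology in \cite{Chataur}) to the coperverse rational model $AI_{\overline{\bullet}}(X)$. The starting point is Theorem \ref{thm:AI_to_EI1}: over $\mathbf{C}$ there is a zig-zag of quasi-isomorphisms of coperverse cdga's between $AI_{\overline{\bullet}}(X) \otimes \mathbf{C}$ and the first page $\EI{1}{\bullet}{X} \otimes \mathbf{C}$ of the weight spectral sequence. Since $\EI{1}{\bullet}{X}$ is a bigraded coperverse cdga with differential of bidegree $(1,0)$ (it preserves the weight grading $s$), and since $\EIbidg{2}{s}{a,b}{X} \cong \gr{b}{W}{\HI{s}{a+b}{X}}$, the hypothesis of $(\overline{p},r)$-purity translates to the statement that for every perversity $\overline{s} \leq \overline{p}$ in $\mathcal{P}^{op}$ and every $k \leq r$ the group $\EIbidg{2}{s}{a,b}{X}$ with $a+b = k$ vanishes unless $a=0$, i.e.\ the $EI_2$-term is concentrated on the vertical line $a=0$ in the $(\overline{p},r)$-range.

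Next I would build an intermediate coperverse cdga $Z_{\overline{\bullet}}$ and construct a span
\[
\EI{1}{\bullet}{X} \otimes \mathbf{C} \;\hookleftarrow\; Z_{\overline{\bullet}} \;\twoheadrightarrow\; (\HI{\bullet}{\ast}{X} \otimes \mathbf{C}, 0)
\]
of $(\overline{p},r)$-quasi-isomorphisms. Concretely, for each perversity $\overline{s}$ and each total degree $n$, set
\[
Z_{\overline{s}}^n :=
\begin{cases}
\ker\bigl(d_1 \colon \EIbidg{1}{s}{0,n}{X} \to \EIbidg{1}{s}{1,n}{X}\bigr) & n \leq r,\\
\EI{1}{s}{X}^n & n > r,
\end{cases}
\]
with the induced differential, product and poset maps. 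Because $d_1$ is a derivation that preserves the $s$-grading and poset maps respect bidegrees, the cocycles in bidegree $(0,\ast)$ form a coperverse sub-cdga in the pure range, and $Z_{\overline{\bullet}}$ is indeed a coperverse cdga. The projection onto $H(\EI{1}{\bullet}{X})$ is well defined in that range because the only relevant cohomology sits on the line $a=0$; one checks by a direct inspection of the $EI_2$-page description that the inclusion $Z \hookrightarrow \EI{1}{\bullet}{X} \otimes \mathbf{C}$ and the quotient $Z \twoheadrightarrow \HI{\bullet}{\ast}{X} \otimes \mathbf{C}$ induce isomorphisms on $H^k_{\overline{s}}$ for $k \leq r$ and $\overline{s} \leq \overline{p}$, and a monomorphism in degree $r+1$, by using that all higher differentials $d_k$ for $k \geq 2$ into or out of the line $a=0$ are forced to vanish in the $(\overline{p},r)$-range for reasons of source/target.

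Concatenating the span above with the zig-zag of Theorem \ref{thm:AI_to_EI1} gives $(\overline{p},r)$-formality of $AI_{\overline{\bullet}}(X) \otimes \mathbf{C}$. To descend from $\mathbf{C}$ to $\mathbf{Q}$ I would invoke the standard Halperin--Stasheff type argument: for a coperverse cdga over $\mathbf{Q}$ of finite type whose cohomology is of finite type in each perversity, formality up to a given degree is detected by vanishing of obstruction classes that are stable under flat base change, so $(\overline{p},r)$-formality over $\mathbf{C}$ forces $(\overline{p},r)$-formality over $\mathbf{Q}$. Finally one concludes that $\I{\bullet}{X}$ is $(\overline{p},r)$-formal since its rational model $AI_{\overline{\bullet}}(X)$ is.

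The main obstacle is the bookkeeping in the middle step: verifying that the choice of cocycle sub-complex $Z_{\overline{\bullet}}$ really assembles into a coperverse cdga (i.e.\ is stable under the extended product $\mu_{\overline{\bullet},\overline{\bullet}}$ and compatible with the poset maps $\pomap{p}{q}$), and that the maps to and from it are $(\overline{p},r)$-quasi-isomorphisms not just perversity-wise but in a way which respects the full coperverse structure. The descent step from $\mathbf{C}$ to $\mathbf{Q}$, while standard in the classical situation, also requires a careful adaptation to the coperverse setting, since one must work with finite type over the poset $\Pos{n}^{op}$ rather than over a single object.
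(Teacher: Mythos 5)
Your overall strategy---reduce to the first page of the weight spectral sequence via Theorem \ref{thm:AI_to_EI1} and then connect $\EI{1}{\bullet}{X}$ to its cohomology by a span through a sub-cdga of cocycles---is the same as the paper's. But your intermediate object $Z_{\overline{\bullet}}$ is not correct, and the step you defer to ``direct inspection'' is exactly where it fails. In total degree $k \leq r$ you keep only $\ker\bigl(\difb{s}{0,k}\bigr) \subset \EIbidg{1}{s}{0,k}{X}$ with zero differential, so $H^{k}(Z_{\overline{s}}) = \ker \difb{s}{0,k}$. On the other hand, the only surviving piece of $H^{k}(\EI{1}{s}{X})$ in that range is $\EIbidg{2}{s}{0,k}{X} = \ker \difb{s}{0,k}/\im{\difb{s}{-1,k}}$, and purity does \emph{not} make $\im{\difb{s}{-1,k}}$ vanish: it forces $\ker \difb{s}{-1,k} = 0$ (because $\EIbidg{2}{s}{-1,k}{X} \cong \gr{k}{W}{\HI{s}{k-1}{X}} = 0$), i.e.\ $\difb{s}{-1,k}$ is \emph{injective}, so its image is as large as $\EIbidg{1}{s}{-1,k}{X}$, a nonzero space built from $H^{k-2}(D)\otimes \mathbf{Q}[t]$. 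These image classes are cocycles of $Z_{\overline{s}}$ that become exact in $\EI{1}{s}{X}$, so the inclusion $Z_{\overline{\bullet}} \hookrightarrow \EI{1}{\bullet}{X}$ fails to be injective on $H^{k}$ for $k \leq r$; the same defect reappears in degree $r+1$, where you need a monomorphism.

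The fix, and what the paper actually does, is to retain the whole column $i=-1$ in low weights: set $\FI{s}{-1,j}{X} := \EIbidg{1}{s}{-1,j}{X}$ for $j \leq r+1$ and $0$ above, $\FI{s}{0,j}{X} := \ker \difb{s}{0,j}$ for \emph{all} $j$, and $\FI{s}{1,j}{X} := 0$. Then the offending classes $\im{\difb{s}{-1,j}}$ are killed inside $\FI{s}{\ast,\ast}{X}$ itself, the inclusion into $\EI{1}{s}{X}$ and the projection onto $(\EIbidg{2}{s}{\ast,\ast}{X},0)$ are both $(\overline{p},r)$-quasi-isomorphisms, and compatibility with the extended product and the poset maps is a direct check. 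Two smaller remarks: the weight spectral sequence degenerates at $E_{2}$, so there is nothing to verify about higher differentials; and the paper's argument takes place over $\mathbf{C}$ via Theorem \ref{thm:AI_to_EI1} and does not carry out the descent to $\mathbf{Q}$ that you sketch, so that part of your proposal is additional rather than a replacement for a step in the paper.
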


\begin{proof}
By theorem \ref{thm:AI_to_EI1}, we need to define a string of $(\overline{p},r)$-quasi-isomorphisms of differential bigraded algebras from  $(\EIbidg{1}{s}{i,j}{X}, \difb{s}{i,j})$ to $(\EIbidg{2}{s}{i,j}{X}, 0)$ for $i+j \leq r$ and $\overline{s} \leq \overline{p}$ in $\Pos{n}^{op}$.

Given $X \in \super$ of dimension $n$ with only isolated singularities, the terms $EI_{1}$ and $EI_{2}$ of the spectral sequence look like.

\[
\begin{array}{| c || c : c : c |}
\hline
j=5            & \vdots  & \vdots   & \vdots  \\ 
\hline
j=4            & \EIbidg{1}{\bullet}{-1,4}{X}  & \EIbidg{1}{\bullet}{0,4}{X}   & \EIbidg{1}{\bullet}{1,4}{X}  \\
\hline
j=3            & \EIbidg{1}{\bullet}{-1,3}{X}  & \EIbidg{1}{\bullet}{0,3}{X}   & \EIbidg{1}{\bullet}{1,3}{X}  \\
\hline
j=2            & \EIbidg{1}{\bullet}{-1,2}{X}  & \EIbidg{1}{\bullet}{0,2}{X}   & \EIbidg{1}{\bullet}{1,2}{X}  \\
\hline
j=1            & \EIbidg{1}{\bullet}{-1,1}{X}  & \EIbidg{1}{\bullet}{0,1}{X}   & \EIbidg{1}{\bullet}{1,1}{X}  \\
\hline
j=0            &                       0       & \EIbidg{1}{\bullet}{0,0}{X}   & \EIbidg{1}{\bullet}{1,0}{X}  \\
\hline
\hline
\EIbidg{1}{\bullet}{i,j}{X} & i=-1                          & i=0                                      & i=1 \\
\hline 
\end{array}  
\]

\[
\begin{array}{| c || c : c : c |}
\hline
j=5            & \vdots  & \vdots   & \vdots  \\ 
\hline
j=4            & \gr{4}{W}{\HI{\bullet}{3}{X}}  & \gr{4}{W}{\HI{\bullet}{4}{X}}   & \gr{4}{W}{\HI{\bullet}{5}{X}}  \\
\hline
j=3            & \gr{3}{W}{\HI{\bullet}{2}{X}}  & \gr{3}{W}{\HI{\bullet}{3}{X}}   & \gr{3}{W}{\HI{\bullet}{4}{X}}  \\
\hline
j=2            & \gr{2}{W}{\HI{\bullet}{1}{X}}  & \gr{2}{W}{\HI{\bullet}{2}{X}}   & \gr{2}{W}{\HI{\bullet}{3}{X}}  \\
\hline
j=1            & \gr{1}{W}{\HI{\bullet}{0}{X}}  & \gr{1}{W}{\HI{\bullet}{1}{X}}   & \gr{1}{W}{\HI{\bullet}{2}{X}}  \\
\hline
j=0            &            0                   & \gr{0}{W}{\HI{\bullet}{0}{X}}   & \gr{0}{W}{\HI{\bullet}{1}{X}}  \\
\hline
\hline
\EIbidg{2}{\bullet}{i,j}{X} & i=-1              & i=0                             & i=1 \\
\hline 
\end{array}  
\]

The $(\overline{p},r)$-purity assumption implies that $\gr{j}{W}{\HI{s}{j-1}{X}}=0$ for all $j \leq r+1$ and $\gr{j}{W}{\HI{s}{j+1}{X}}=0$ for all $j \leq r-1$. This means that $\ker \difb{s}{-1,j} =0$ for all $j \leq r+1$ and $\im{\difb{s}{0,j}} = \EIbidg{1}{s}{1,j}{X}$ for all $j \leq r-1$.

Denote by $\FI{s}{i,j}{X}$ the bigraded differential algebra defined by, for all $\overline{s} \leq \overline{p}$ in $\Pos{n}^{op}$
\[
\begin{cases}
\FI{s}{-1,j}{X} := \EIbidg{1}{s}{-1,j}{X}         & j \leq r+1, \\
\FI{s}{-1,j}{X} := 0                              & j > r+1, \\
\FI{s}{0,j}{X}  := \ker \difb{s}{0,j}             & \forall \,j, \\
\FI{s}{1,j}{X}  := 0                              & \forall \, j. 
\end{cases}
\]
The differential being $\difb{s}{i,j}$.

The bigraded differential algebra $\FI{s}{\ast,\ast}{X}$ has the following product structure 
\[
\begin{cases}
\FI{s}{-1,j}{X} \times \FI{s}{-1,j}{X} \longrightarrow 0                       & \forall \, j, \\
\FI{s}{-1,j}{X} \times \FI{s}{0,j'}{X}  \longrightarrow  \FI{s}{-1,j+j'}{X} & \forall \, j,j', \\
\FI{s}{0,j}{X}  \times \FI{s}{0,j'}{X}  \longrightarrow  \FI{s}{0,j+j'}{X}  & \forall \, j,j'.
\end{cases}
\]
which is well defined and is compatible with $\difb{s}{i,j}$ and poset maps $EI_{1}(\pomap{s+1}{s})$ for all $\overline{s} \leq \overline{p}$.

We then clearly have a inclusion $(\FI{s}{i,j}{X}, \difb{s}{i,j}) \hookrightarrow (\EIbidg{1}{s}{i,j}{X}, \difb{s}{i,j})$, the map $(\FI{s}{i,j}{X}, \difb{s}{i,j}) \rightarrow (\EIbidg{2}{s}{i,j}{X},0)$ is defined by the following commutative diagram where the dashed arrows are the zero map.

\[
\begin{tikzpicture}[injection/.style={right hook->,fill=white, inner sep=2pt}]
\matrix (m)[matrix of math nodes, row sep=3em, column sep=-1em, text height=2ex, text depth=0.25ex]
{\FI{s+1}{-1,j}{X}    &                              & \ker d^{0,j}_{1, \overline{s+1}} &  \\
                             & \FI{s}{-1,j}{X}       &                                  & \ker d^{0,j}_{1, \overline{s}} \\
 \gr{j}{W}{\HI{s+1}{j-1}{X}} &                              & \gr{j}{W}{\HI{s+1}{j}{X}}        & \\
                             & \gr{j}{W}{\HI{s}{j-1}{X}}    &                                  & \gr{j}{W}{\HI{s}{j}{X}}\\};
                             
\path[injection]
(m-1-1) edge node[auto] {$\difb{s+1}{-1,j}$} (m-1-3);
\path[injection]
(m-2-2) edge node[above left] {$\difb{s}{-1,j}$} (m-2-4);
\path[dashed,->]
(m-3-1) edge node[auto,swap] {} (m-3-3);
\path[dashed,->]
(m-4-2) edge node[auto,swap] {} (m-4-4);

\path[dashed,->]
(m-1-1) edge node[auto] {} (m-3-1);
\path[dashed,->]
(m-2-2) edge node[auto] {} (m-4-2);
\path[->>]
(m-1-3) edge node[below right] {$p$} (m-3-3);
\path[->>]
(m-2-4) edge node[auto] {$p$} (m-4-4);

\path[->]
(m-1-1) edge node[auto] {$EI_{1}(\pomap{s+1}{s})$} (m-2-2);
\path[->]
(m-3-1) edge node[auto,swap] {$EI_{2}(\pomap{s+1}{s})$} (m-4-2);
\path[->]
(m-1-3) edge node[auto] {$EI_{1}(\pomap{s+1}{s})$} (m-2-4);
\path[->]
(m-3-3) edge node[auto,swap] {$EI_{2}(\pomap{s+1}{s})$} (m-4-4);
\end{tikzpicture}
\]

The string $(\EIbidg{1}{s}{i,j}{X}, \difb{s}{i,j}) \longleftarrow (\FI{s}{i,j}{X}, \difb{s}{i,j}) \rightarrow (\EIbidg{2}{s}{i,j}{X},0)$ then defines a $(\overline{p},r)$-quasi-isomorphism.
\end{proof}

Regardless of the perversity. The two cases of special interest here are the cases where $r=1$ and $r= \infty$. 

The case $r=1$, the 1-formality, implies that the rational Malcev completion of $\pi_{1}(\I{p}{X})$ can be computed directly from the cohomology group $\HI{p}{1}{X}$, together with the cup product $\HI{p}{1}{X} \otimes \HI{p}{1}{X} \rightarrow \HI{p}{2}{X}$. We then say that $\pi_{1}(\I{p}{X})$ is 1-formal. 

The case $r=\infty$ implies the formality of $\I{p}{X}$ in the usual sense, which in the cases where $\I{p}{X}$ is simply-connected or nilpotent implies that the rational homotopy groups $\pi_{i}(\I{p}{X})\otimes \mathbf{Q}$ can be directly computed from the cohomology ring $\HI{p}{\ast}{X}$. We note that formality implies 1-formality.

Suppose now $X \in \super$ with only normal isolated singularities, that is 
\[
\HI{\infty}{k}{X}=H^{k}(\overline{X})=H^{k}(X)
\]
then by the Van-Kampen theorem and by definition \ref{def:intersectionspace2} for any perversity $\overline{p}$ we have
\[
\pi_{1}(X) = \pi_{1}(\I{p}{X}) = \pi_{1}(X_{reg}).
\]
Morevover, whether $\overline{p}=\overline{0}$ or $\overline{p} \neq \overline{0}$ we have the two following commutative diagrams.
\[
\begin{tikzpicture}[injection/.style={right hook->,fill=white, inner sep=2pt}]
\matrix (m)[matrix of math nodes, row sep=3em, column sep=3em, text height=2ex, text depth=0.25ex]
{  \HI{0}{1}{X} \otimes \HI{0}{1}{X}                               &  \HI{0}{2}{X} \\ 
H^{1}(X_{reg}) \otimes  H^{1}(X_{reg})  &   H^{2}(X_{reg})  \\};

\path[->]
(m-1-1) edge node[auto] {$- \cup -$} (m-1-2);
\path[->]
(m-2-1) edge node[auto,swap] {$- \cup -$} (m-2-2);
\path[->]
(m-1-1) edge node[auto,swap] {=} (m-2-1);
\path[->]
(m-1-2) edge node[auto] {=} (m-2-2);
\end{tikzpicture}
\]
\[
\begin{tikzpicture}[injection/.style={right hook->,fill=white, inner sep=2pt}]
\matrix (m)[matrix of math nodes, row sep=3em, column sep=3em, text height=2ex, text depth=0.25ex]
{ H^{1}(X) \otimes  H^{1}(X)  &   H^{2}(X)  \\
  \HI{p}{1}{X} \otimes \HI{p}{1}{X}                   &  \HI{p}{2}{X} \\};

\path[->]
(m-1-1) edge node[auto] {$- \cup -$} (m-1-2);
\path[->]
(m-2-1) edge node[auto,swap] {$- \cup -$} (m-2-2);
\path[->]
(m-1-1) edge node[auto,swap] {$\cong$} (m-2-1);
\path[injection]
(m-1-2) edge node[auto] {} (m-2-2);
\end{tikzpicture}
\]

Which means that if $X$ is 1-formal then we can compute the rational Malcev completion of $\pi_{1}(\I{p}{X})$ by computing the one from $\pi_{1}(X)$. It is a result from \cite{Arapura2015} that when considering normal projective varieties the fundamental group is always 1-formal, see also \cite[Corollary 3.8]{Chataur2015} for the isolated singularities case. We can then deduce the following result

\begin{propo}
Let $X \in \super$ with only normal isolated singularities. Then for any perversity $\overline{p}$ $\pi_{1}(\I{p}{X})$ is formal.
\end{propo}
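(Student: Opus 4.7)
The plan is a short two-step reduction to the known 1-formality of the fundamental group of a normal projective variety.

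First, I would make use of the fundamental group identification already carried out in the paragraph preceding the statement. Since $X$ is normal we have $X=\overline{X}$, so $\HI{\infty}{\ast}{X}=H^{\ast}(X)$, and since each link $L_{i}$ is simply connected, Van Kampen applied to the homotopy pushout of Definition \ref{def:intersectionspace2} (whose other corner $\bigsqcup \cotr{p}{L_{i}}$ is also a disjoint union of simply connected pieces) yields
\[
\pi_{1}(X) \cong \pi_{1}(\I{p}{X}) \cong \pi_{1}(X_{reg})
\]
for every perversity $\overline{p} \in \Pos{n}^{op}$. Thus the proposition reduces to showing $\pi_{1}(X)$ is 1-formal.

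Second, I would invoke the theorem of Arapura \cite{Arapura2015} (or equivalently the isolated singularity version recalled in \cite[Corollary 3.8]{Chataur2015}) which asserts that $\pi_{1}(X)$ is 1-formal over $\mathbf{Q}$ for every normal complex projective variety $X$. Since 1-formality (equivalently, formality for a discrete group) is an invariant of the abstract isomorphism class of the group — it depends only on the rational Malcev completion — the isomorphism $\pi_{1}(\I{p}{X})\cong \pi_{1}(X)$ transports the property and the proof ends.

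The two commutative diagrams displayed immediately before the statement are not strictly needed for the logical chain above, but provide the concrete geometric justification of why the transfer is legitimate: the cup product $\HI{p}{1}{X} \otimes \HI{p}{1}{X} \to \HI{p}{2}{X}$, whose kernel encodes the quadratic relations in the Malcev Lie algebra, is either identified with the corresponding product on $X_{reg}$ (when $\overline{p}=\overline{0}$) or factors through the one on $X$ (for $\overline{p}\neq \overline{0}$) via an injection in degree $2$. In both cases the minimal quadratic data computing the Malcev presentation coincides with that of $\pi_{1}(X)$, confirming that one really is computing the same invariant.

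There is no real obstacle here: everything of substance is packaged in the two references (the Van Kampen identification and Arapura's theorem), and the bulk of the argument has already been laid out in the text before the statement. The only point one might worry about is ensuring the normality hypothesis is used consistently (it enters both to identify $X=\overline{X}$ for the Van Kampen step and to apply Arapura's result), which is automatic since both uses require precisely the same hypothesis on $X \in \super$.
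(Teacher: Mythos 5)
Your proposal is correct and follows essentially the same route as the paper: the text preceding the proposition is exactly its proof, namely the Van Kampen identification $\pi_{1}(X)\cong\pi_{1}(\I{p}{X})\cong\pi_{1}(X_{reg})$ together with the two commutative diagrams transferring the degree-one cup product, followed by Arapura's theorem on the 1-formality of fundamental groups of normal projective varieties. Your added remark that the diagrams are strictly redundant once one knows 1-formality is an invariant of the abstract group is a fair observation, and does not change the substance of the argument.
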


We also highlight the case $r=\infty$.

\begin{cor}
Let $X \in \super$ with only isolated singularities. If $\I{\bullet}{X}$ is $(\overline{p},\infty)$-pure then $\I{s}{X}$ is formal for any $\overline{s} \leq \overline{p}$ in $\mathcal{P}^{op}$.
\end{cor}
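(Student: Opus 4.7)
The plan is to obtain this as a direct specialization of Theorem \ref{thm:pure_is_formal} followed by unpacking the definition of $(\overline{p},\infty)$-formality into a statement about the individual topological spaces $\I{s}{X}$. Since the hypothesis of the theorem is precisely $(\overline{p},\infty)$-purity, the theorem delivers the coperverse conclusion for free, and the only substantive work is to transfer it to a classical (non-coperverse) formality statement for each fixed perversity $\overline{s} \leq \overline{p}$.

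First, I apply Theorem \ref{thm:pure_is_formal} with $r = \infty$ to conclude that the coperverse rational model $AI_{\overline{\bullet}}(X)$ is $(\overline{p},\infty)$-formal. By definition, this provides a string of $(\overline{p},\infty)$-quasi-isomorphisms of coperverse cdga's
\[
AI_{\overline{\bullet}}(X) \longleftarrow \cdots \longrightarrow (HI_{\overline{\bullet}}^{\ast}(X),0).
\]
Next, I fix a perversity $\overline{s} \leq \overline{p}$ in $\Pos{n}^{op}$ and apply the evaluation functor $Ev_{\overline{s}} \col \cdga{Q} \rightarrow \CDGA{Q}$. Since each arrow in the string is a $(\overline{p},\infty)$-quasi-isomorphism, the definition forces the induced map $H^{i}_{\overline{s}}(f)$ to be an isomorphism for every $i$, so after evaluation at $\overline{s}$ we obtain a string of honest quasi-isomorphisms of cdga's
\[
\AI{s}{X} \longleftarrow \cdots \longrightarrow (\HI{s}{\ast}{X},0).
\]

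Finally, since $\AI{s}{X}$ is by construction a rational model of the topological space $\I{s}{X}$ (see \ref{def:pb_coperv_cdga} and the definition of $AI_{\overline{\bullet}}$), this string exhibits $\I{s}{X}$ as formal in the classical Sullivan sense. As $\overline{s}$ was arbitrary in $\lbrace \overline{t} \leq \overline{s} \leq \overline{p} \rbrace$, the result follows. There is no real obstacle here: the only mildly delicate point is to observe that the definition of $(\overline{p},r)$-quasi-isomorphism ranges over all $\overline{s} \leq \overline{p}$, so that the single coperverse string produced by Theorem \ref{thm:pure_is_formal} simultaneously witnesses the formality of every $\I{s}{X}$ with $\overline{s}$ in this range, rather than merely of $\I{p}{X}$ itself.
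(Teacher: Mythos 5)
Your proposal is correct and is exactly the argument the paper leaves implicit: the corollary is stated as an immediate consequence of Theorem \ref{thm:pure_is_formal} at $r=\infty$, and your unpacking (evaluate the string of $(\overline{p},\infty)$-quasi-isomorphisms at each fixed $\overline{s}\leq\overline{p}$ to get honest quasi-isomorphisms from $\AI{s}{X}$ to its cohomology) is the intended reading. The only cosmetic caveat is that the underlying string from Theorem \ref{thm:AI_to_EI1} lives over $\mathbf{C}$, so strictly speaking one obtains formality over $\mathbf{C}$, matching the phrasing of Theorem \ref{thm:3fold_formal}.
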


\begin{remark}
The question of the purity of the weight filtration is also considered in intersection cohomology, where a similar result of "purity implies formality" exists \cite[corollary 3.13]{Chataur}. It must be pointed out that the purity of $X \in \super$ in intersection cohomology does not imply the purity of $\I{\bullet}{X}$. For example the Kummer surface of section \ref{subsec:Kummer}, it is a $\mathbf{Q}$-homology manifold and as such $\IH{p}{k}{X}$ is pure of weight $k$ for any perversities and then is intersection formal. This is not the case of the corresponding intersection space for the middle perversity $\I{1}{X}$ since $\gr{4}{W}{\HI{1}{3}{X}} \neq 0$.

Another and more involved example. It is a consequence of Gabber's purity theorem and the decomposition theorem of intersection homology (see \cite{Steenbrink1983}) that for projective varieties $X$ with isolated singularities and for the middle perversity, the weight filtration $W$ on $\IH{m}{k}{X}$ is pure of weight $k$ for all $k \geq 0$, this is not the case for the Calabi-Yau 3-folds treated in the last part as we see that the weight filtration $W$ on $\HI{m}{k}{X}$ isn't pure.
\end{remark}

\section{Formality of intersection spaces for 3-folds}
\label{sec:formality_3folds}
\subsection{Preparatory work}

Let $X$ be a complex projective algebraic 3-fold with isolated singularities and denote by $\Sigma= \lbrace \sigma_{1}, \dots, \sigma_{\nu} \rbrace$ the singular locus of $X$. Assume that there is a resolution of singularities $f \col \widetilde{X} \rightarrow X$ such that the exceptional divisor $D := f^{-1}(\Sigma)$ is smooth and the the link $L_{i}$ of $\sigma_{i}$ in $X$, for all $\sigma_{i} \in \Sigma$ is simply connected.

First we recall and collect the different properties we will need. We state them in the case of a space of complex dimension 3 but they are completely general and holds for any complex projective variety of complex dimension $n$ with only isolated singularities by replacing $3$ by $n$. The proofs can be found in \cite{Chataur}.

\begin{lem}
\label{lem:DP_ker_coker}
We have the following Poincaré duality isomorphisms for all $0 \leq s \leq 3$,
\[
\coker{\gamma^{3+s}} \cong (\ker j^{3-s})^{\vee} \quad \ker \gamma^{3+s} \cong (\coker{j^{3-s}})^{\vee}
\]
\end{lem}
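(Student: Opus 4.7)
The plan is to observe that the Gysin map and the restriction map are Poincaré duals of each other, and then simply dualize the two short exact sequences defining kernel and cokernel. Concretely, $\widetilde{X}$ has real dimension $6$ and the smooth divisor $D$ has real dimension $4$, so Poincaré duality gives canonical isomorphisms $H^k(\widetilde{X})^\vee \cong H^{6-k}(\widetilde{X})$ and $H^{k-2}(D)^\vee \cong H^{6-k}(D)$. Under these identifications, the Gysin map $\gamma^k\col H^{k-2}(D) \to H^k(\widetilde{X})$ is the dual of the restriction map $j^{6-k}\col H^{6-k}(\widetilde{X}) \to H^{6-k}(D)$. This is a classical fact for a smooth divisor in a smooth projective variety, and is the essential input.

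Once this duality is in hand, I would write down the exact sequence
\[
H^{3+s-2}(D) \xrightarrow{\gamma^{3+s}} H^{3+s}(\widetilde{X}) \longrightarrow \coker \gamma^{3+s} \longrightarrow 0
\]
and dualize it (everything is a finite-dimensional $\mathbf{Q}$-vector space, so $(-)^\vee$ is exact). Using the identification above, the dual sequence becomes
\[
0 \longrightarrow (\coker \gamma^{3+s})^\vee \longrightarrow H^{3-s}(\widetilde{X}) \xrightarrow{j^{3-s}} H^{3-s}(D),
\]
which identifies $(\coker \gamma^{3+s})^\vee$ with $\ker j^{3-s}$, giving the first isomorphism. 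Dualizing the analogous sequence
\[
0 \longrightarrow \ker \gamma^{3+s} \longrightarrow H^{3+s-2}(D) \xrightarrow{\gamma^{3+s}} H^{3+s}(\widetilde{X})
\]
yields the second isomorphism $\ker \gamma^{3+s} \cong (\coker j^{3-s})^\vee$ in the same way.

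The only real content is verifying that $\gamma^k$ and $j^{6-k}$ are Poincaré dual. This is where I would need to be careful: the cleanest way is to recall that the Gysin map is defined, via Poincaré–Lefschetz duality on $\widetilde{X}$ and $D$, as the push-forward along the inclusion $j\col D \hookrightarrow \widetilde{X}$, whereas the restriction map $j^\ast$ is its adjoint at the level of cohomology through the cap product with the fundamental classes. So the statement "$\gamma^k$ is dual to $j^{6-k}$" is the cohomological reformulation of the pairing $\langle j_\ast a, x\rangle_{\widetilde{X}} = \langle a, j^\ast x\rangle_{D}$ for $a \in H^{k-2}(D)$ and $x \in H^{6-k}(\widetilde{X})$. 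The main obstacle, such as it is, is bookkeeping with degree shifts; no deep argument is needed, and I would expect this lemma to be stated and proved already in \cite{Chataur} in the general $n$-dimensional form, from which the $n=3$ version is a direct specialization.
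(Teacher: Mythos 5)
Your proof is correct and is exactly the standard argument behind this lemma: the paper itself gives no proof here but defers to \cite{Chataur}, where the statement is established in the general $n$-dimensional form via the same adjunction $\langle j_{\ast}a, x\rangle_{\widetilde{X}} = \langle a, j^{\ast}x\rangle_{D}$ and dualization of the kernel/cokernel exact sequences. Your degree bookkeeping ($H^{k-2}(D)^{\vee}\cong H^{6-k}(D)$ since $D$ has real dimension $4$) checks out, so there is nothing to add.
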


Recall that since $\dim(\Sigma) =0$, the weight filtration on the cohomology of the link is semi-pure, meaning :
\begin{itemize}
\item the weights on $H^{k}(L)$ are less than or equal to $k$ for $k <3$,
\item the weights on $H^{k}(L)$ are greater or equal to $k+1$ for $k \geq 3$.
\end{itemize}

We than have the following results.
\begin{lem}
\label{lem:properties_1}
With the previous notations we have :
\begin{enumerate}
\item The map $j_{\sharp}^{k} \col H^{k-2}(D) \rightarrow H^{k}(D)$ is injective for $k \leq 3$ and surjective for $k \geq 3$.
\item The Gysin map $\gamma^{k} \col H^{k-2}(D) \rightarrow H^{k}(\widetilde{X})$ is injective for $k \leq 3$ and $\gamma^{6}$ is surjective. 
\item The restriction morphism $j^{k} \col H^{k}(\widetilde{X}) \rightarrow H^{k}(D)$ is surjective for $k \geq 3$.
\end{enumerate}
\end{lem}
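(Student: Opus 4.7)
The plan is to prove the three parts in the logical order $(3) \Rightarrow (1) \Rightarrow (2)$, using two long exact sequences of mixed Hodge structures together with the weight information already collected. The first tool is the Mayer--Vietoris long exact sequence associated to the blow-up square $D \to \widetilde{X}$, $\Sigma \to X$, which for $k \geq 1$ reads
\[
\cdots \to H^k(X) \to H^k(\widetilde{X}) \xrightarrow{j^k} H^k(D) \xrightarrow{\delta} H^{k+1}(X) \to \cdots
\]
since $\Sigma$ is discrete. The second is the Thom--Gysin sequence for a tubular neighbourhood $(T,L)$ of $D$ in $\widetilde{X}$, which after the Thom isomorphism $H^k(T,L) \cong H^{k-2}(D)(-1)$ and the retraction $T \simeq D$ becomes
\[
\cdots \to H^{k-1}(L) \xrightarrow{\partial} H^{k-2}(D)(-1) \xrightarrow{j^k_\sharp} H^k(D) \to H^k(L) \to \cdots
\]
Both are sequences of mixed Hodge structures. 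The only further input I would use is Steenbrink's purity: for $X$ projective of complex dimension $3$ with isolated singularities, $H^k(X)$ is pure of weight $k$ whenever $k>3$.

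For part (3), in the blow-up sequence with $k \geq 3$, the connecting map $\delta$ is a morphism of mixed Hodge structures from $H^k(D)$, pure of weight $k$, to $H^{k+1}(X)$, which is pure of weight $k+1$ by Steenbrink's purity (applicable since $k+1 > 3$). A morphism between pure Hodge structures of distinct weights must vanish, so $\delta = 0$ and hence $j^k$ is surjective.

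For part (1), I would use only the Thom--Gysin sequence. If $k \geq 3$, then $H^k(D) \to H^k(L)$ has source pure of weight $k$ and target of weights $\geq k+1$ by the semi-purity of the link recalled above; hence it vanishes and $j^k_\sharp$ is surjective. If $k \leq 3$, the preceding map $H^{k-1}(L) \xrightarrow{\partial} H^{k-2}(D)(-1)$ has source with weights $\leq k-1$ by semi-purity (since $k-1 < 3$) while the target, after the Tate twist, is pure of weight $(k-2)+2 = k$; therefore $\partial = 0$ and $j^k_\sharp$ is injective.

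Part (2) will then follow from Poincaré duality on $\widetilde{X}$: the Gysin map $\gamma^k$ is the transpose of the restriction $j^{6-k}$ via the Poincaré pairings on $D$ and $\widetilde{X}$, so $\gamma^k$ is injective if and only if $j^{6-k}$ is surjective. For $k \leq 3$ we have $6-k \geq 3$, and the surjectivity of $j^{6-k}$ coming from (3) delivers the injectivity of $\gamma^k$. For $k=6$ the transpose is $j^0 \col H^0(\widetilde{X}) \cong \mathbf{Q} \to H^0(D) \cong \mathbf{Q}^\nu$, the diagonal embedding, which is injective since $\widetilde{X}$ is connected, so $\gamma^6$ is surjective. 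The main obstacle in the scheme is not the weight bookkeeping itself but the initial point of checking carefully that the Thom--Gysin and blow-up sequences are sequences of mixed Hodge structures with the correct weight shifts, especially the Tate twist $(-1)$ on the Thom-iso term; this is standard but easy to mishandle, and it is precisely what powers the injectivity half of (1).
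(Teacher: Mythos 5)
Your argument is correct, and it follows essentially the same route as the source: the paper itself does not reprove this lemma but defers to the cited work of Chataur--Cirici, where parts (1) and (3) are obtained exactly as you do, from the Gysin sequence of $D\subset\widetilde{X}$ and the discriminant (blow-up) Mayer--Vietoris sequence, using the semi-purity of the link, the purity of $H^{k}(D)$, and the purity of $H^{k}(X)$ above the middle degree, while (2) follows by Poincar\'e duality as in Lemma \ref{lem:DP_ker_coker}. Your bookkeeping of the Tate twist $(-1)$ on the Thom term and the weight ranges is accurate, so there is nothing to correct.
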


\begin{lem}
\label{lem:properties_2}
With the assumption on the links $L$, we have the following :
\begin{enumerate}
\item The map $j_{\sharp}^{2} \col H^{0}(D) \rightarrow H^{2}(D)$ in injective, the map $j_{\sharp}^{4} \col H^{2}(D) \rightarrow H^{4}(D)$ is surjective, $j_{\sharp}^{k} \col H^{k-2}(D) \rightarrow H^{k}(D)$ is an isomorphism for $k =1,3,5$.
\item The map $\gamma^{k} \col H^{k-2}(D) \rightarrow H^{k}(\widetilde{X})$ is injective for all $k \neq 4,6$ and $j^{k} \col H^{k}(\widetilde{X}) \rightarrow H^{k}(D)$ is surjective for all $k \neq 0,2$.
\end{enumerate}
\end{lem}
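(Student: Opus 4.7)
The plan is to reduce everything to the Gysin long exact sequence of the unit normal circle bundle $L_i \to D_i$, one for each connected component $D_i$ of the smooth exceptional divisor $D$, and then to cash in the hypothesis that each link $L_i$ is simply connected. Since $D_i$ is a smooth divisor in $\widetilde{X}$, a tubular neighbourhood retracts onto $D_i$ with boundary $L_i$ equal to the unit sphere bundle of the normal line bundle $N_i := N_{D_i/\widetilde{X}}$, and hence $L_i \to D_i$ is an oriented $S^1$-bundle. The associated Gysin sequence reads
\[
\cdots \to H^{k-2}(D_i) \xrightarrow{\,\cdot\, e(N_i)\,} H^k(D_i) \to H^k(L_i) \to H^{k-1}(D_i) \xrightarrow{\,\cdot\, e(N_i)\,} H^{k+1}(D_i) \to \cdots,
\]
and by the self-intersection formula the cup product with $e(N_i)$ coincides with $j^k_\sharp = j^k \circ \gamma^k$.

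The next step is to extract two key vanishings. Simple-connectedness of $L_i$ gives $H^1(L_i) = 0$, and Poincaré duality on the compact oriented real $5$-manifold $L_i$ then forces $H^4(L_i) \cong H^1(L_i)^{\vee} = 0$. Plugging $H^1(L_i) = 0$ into the segment $H^{-1}(D_i) \to H^1(D_i) \to H^1(L_i)$ of the Gysin sequence yields $H^1(D_i) = 0$, and Poincaré duality on the smooth projective surface $D_i$ then gives $H^3(D_i) \cong H^1(D_i)^{\vee} = 0$.

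With these vanishings in hand, item (1) is a direct reading of the Gysin sequence: for $k = 1, 3, 5$ both $H^{k-2}(D)$ and $H^k(D)$ vanish, so $j^k_\sharp$ is trivially an isomorphism; for $k = 2$ the preceding term $H^1(L_i)$ vanishes, making $j^2_\sharp$ injective; and for $k = 4$ the following term $H^4(L_i)$ vanishes, making $j^4_\sharp$ surjective. Item (2) then follows by combining Lemma~\ref{lem:properties_1} with the vanishings $H^1(D) = H^3(D) = 0$: the cases $k \le 3$ for $\gamma^k$ and $k \ge 3$ for $j^k$ are already granted by Lemma~\ref{lem:properties_1}, while the remaining relevant cases $\gamma^5$ and $j^1$ are automatic because their source, respectively target, is zero, and for $k > 6$ the cohomology of $D$ vanishes altogether.

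No serious obstacle is expected here: once the Gysin sequence is set up and $H^1(D_i) = 0$ is extracted from the simply-connected link hypothesis, the rest reduces to elementary bookkeeping. The only subtleties worth flagging are the identification of cup product with $e(N_i)$ with $j^k_\sharp$ via the self-intersection formula, and the coherent use of Poincaré duality both on $L_i$ (to get $H^4(L_i) = 0$) and on $D_i$ (to get $H^3(D_i) = 0$).
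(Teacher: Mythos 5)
Your argument is correct, but it is worth noting that the paper gives no proof of this lemma at all: it defers to the cited work of Chataur--Cirici, where the corresponding statements are obtained from the weight spectral sequence of the link together with semi-purity. Your route is a purely topological alternative: you realize $L_i \to D_i$ as the unit circle bundle of the normal line bundle, identify the Gysin differential $\cup\, e(N_i)$ with $j^k_\sharp = j^k\circ\gamma^k$ via the self-intersection formula, and then feed in the two vanishings $H^1(L_i)=0$ (simple connectivity) and $H^4(L_i)\cong H^1(L_i)^{\vee}=0$ (Poincar\'e duality on the $5$-manifold $L_i$). This is exactly the right pair of inputs --- in the mixed Hodge picture these same two groups control $\ker j^2_\sharp$, $\operatorname{coker} j^1_\sharp$, $\ker j^5_\sharp$ and $\operatorname{coker} j^4_\sharp$ through the $E_2$-page of the weight spectral sequence of $L$, so the two arguments are structurally parallel; yours has the advantage of being elementary and self-contained, while the Hodge-theoretic one generalizes more readily to $(n-2)$-connected links in higher dimension. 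Your deduction of $H^1(D)=0$ from the segment $0\to H^1(D_i)\to H^1(L_i)$ and then $H^3(D)=0$ by duality on the surface $D_i$ is the key step and is carried out correctly, and item (2) is legitimately reduced to Lemma~\ref{lem:properties_1} plus these vanishings (the only new cases, $\gamma^5$ and $j^1$, have zero source resp.\ target). The only point I would ask you to make explicit is the identification of the link $L_i$ of $\sigma_i$ in $X$ with the boundary of a tubular neighbourhood of $D_i$ in $\widetilde{X}$, which uses that $f$ is an isomorphism away from $\Sigma$; with that said, the proof is complete.
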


\begin{lem}
\label{lem:properties_3}
With the above assumptions we have the following :
\begin{enumerate}
\item $H^{k}(\widetilde{X}) \cong \ker j^{k} \oplus \im{\gamma^{k}}$ for $k=1,3,5$.
\item $\ker j^{2} \cap \im{\gamma^{2}} = 0$.
\end{enumerate}
\end{lem}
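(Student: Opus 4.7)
The plan is to extract both statements from the behavior of the composite $j^{k}_{\sharp} = j^{k}\circ \gamma^{k}$ established in lemma \ref{lem:properties_2}. Recall that under the simple-connectivity assumption on the links, $j^{k}_{\sharp}$ is an isomorphism for $k=1,3,5$ and is injective for $k=2$. This is precisely the input that controls how $\im \gamma^{k}$ sits inside $H^{k}(\widetilde{X})$ relative to $\ker j^{k}$.

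For the first statement, fix $k\in\{1,3,5\}$. I would first verify that the intersection is trivial: if $x = \gamma^{k}(a) \in \ker j^{k}\cap \im \gamma^{k}$, then $j^{k}_{\sharp}(a) = j^{k}(\gamma^{k}(a)) = j^{k}(x) = 0$, and since $j^{k}_{\sharp}$ is injective we get $a = 0$, hence $x = 0$. Next I would show $\ker j^{k} + \im \gamma^{k} = H^{k}(\widetilde{X})$. Given $x \in H^{k}(\widetilde{X})$, consider $j^{k}(x) \in H^{k}(D)$; since $j^{k}_{\sharp}$ is surjective, there exists $a \in H^{k-2}(D)$ with $j^{k}_{\sharp}(a) = j^{k}(x)$, that is $j^{k}(\gamma^{k}(a)) = j^{k}(x)$. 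Then $x - \gamma^{k}(a) \in \ker j^{k}$, yielding the decomposition $x = (x - \gamma^{k}(a)) + \gamma^{k}(a)$. Together with the trivial intersection this gives the direct sum decomposition.

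For the second statement, the same trivial-intersection argument applies verbatim with $k=2$: if $x = \gamma^{2}(a) \in \ker j^{2}\cap \im \gamma^{2}$, then $j^{2}_{\sharp}(a) = j^{2}(x) = 0$, and the injectivity of $j^{2}_{\sharp}$ from lemma \ref{lem:properties_2} forces $a=0$, so $x=0$.

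There is no real obstacle here; the content is entirely formal once lemma \ref{lem:properties_2} is in place. The reason the splitting is stated only for $k=1,3,5$ and not for $k=2$ is visible in the plan above: for $k=2$ we only have injectivity of $j^{2}_{\sharp}$ (not surjectivity), so the surjectivity half of the argument breaks down — indeed, $j^{2}_{\sharp}\colon H^{0}(D)\to H^{2}(D)$ is typically far from surjective — and one can only assert that the two subspaces meet trivially.
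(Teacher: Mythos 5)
Your argument is correct: the triviality of $\ker j^{k}\cap\im{\gamma^{k}}$ follows from injectivity of $j^{k}_{\sharp}$, and the spanning statement from its surjectivity, exactly as you lay out. The paper itself does not prove Lemma \ref{lem:properties_3} but defers to the cited reference of Chataur--Cirici, where the argument is precisely this formal consequence of Lemma \ref{lem:properties_2}, so your proof matches the intended one.
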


With the lemmas above the second term of the spectral sequences for the regular part and the links are given by
\[
\begin{array}{| c || c | c |}
\hline
\multicolumn{3}{|c|}{E_{2}^{r,s}(X_{reg})} \\
\hline
s=6            & \ker \gamma^{6}  & 0   \\ 
\hline
s=5            & 0                & \coker{\gamma^{5}}     \\ 
\hline
s=4            & \ker \gamma^{4}  & \coker{\gamma^{4}}    \\
\hline
s=3            & 0                & \coker{\gamma^{3}}   \\
\hline
s=2            & 0                & \coker{\gamma^{2}}    \\
\hline
s=1            & 0                & \coker{\gamma^{1}}    \\
\hline
s=0            & 0                & H^{0}(\widetilde{X})    \\
\hline
\hline
               & r=-1             & r=0                                     \\
\hline 
\end{array}  
\quad 
\begin{array}{| c || c | c |}
\hline
\multicolumn{3}{|c|}{E_{2}^{r,s}(L)} \\
\hline
s=6            & H^{4}(D)            & 0   \\ 
\hline
s=5            & 0                   & 0     \\ 
\hline
s=4            & \ker j^{4}_{\sharp} & 0    \\
\hline
s=3            & 0                   & 0   \\
\hline
s=2            & 0                   & \coker{j^{2}_{\sharp}}    \\
\hline
s=1            & 0                   & 0    \\
\hline
s=0            & 0                   & H^{0}(D)    \\
\hline
\hline
               & r=-1                & r=0 \\
\hline 
\end{array} 
\]

The computation of the cohomology of the intersection spaces involve a choice of complementary subspace $\coim{p}$, we detail here the choice we make.
\begin{itemize}
\item For the perversity $\overline{1}$, the map $j_{\sharp}^{2}$ is injective by lemma \ref{lem:properties_1}, we then have $\coim{1} = H^{0}(D)$ and $\coker{\gamma^{2}_{|\coim{1}}} = \coker{\gamma^{2}}$.
\item For the perversity $\overline{2}$, the map $j_{\sharp}^{3}$ is an isomorphism by lemma \ref{lem:properties_2}, we then also have $\coim{2} = H^{1}(D)$ and $\coker{\gamma^{3}_{|\coim{2}}} = \coker{\gamma^{3}}$.
\item For the perversity $\overline{3}$, there is no assumption on $j_{\sharp}^{4}$ and we chose a complementary subspace of $\ker j_{\sharp}^{4}$ which we denote by $\coim{3}$.
\item For the perversity $\overline{4}$, the map $j_{\sharp}^{5}$ is an isomorphism by lemma \ref{lem:properties_2}, we then also have $\coim{4} = H^{3}(D)$ and $\coker{\gamma^{5}_{|\coim{4}}} = \coker{\gamma^{5}}$.
\end{itemize}

Since the links of the singularities are simply connected five dimensional manifolds, by definition of the intersection spaces we have $\I{0}{X} \simeq \I{1}{X}$ and $\I{3}{X} \simeq \I{4}{X}$. Thus the second terms of the corresponding spectral sequences must be isomorphic, for now the corresponding second term for the associated spectral sequences are the following.
\[
\begin{array}{| c || c c c || c c c|} 
\hline
               & \multicolumn{3}{c||}{\EIbidg{2}{0}{r,s}{X}}          & \multicolumn{3}{c|}{\EIbidg{2}{1}{r,s}{X}} \\
\hline
s = 6          & \ker \gamma^{6} &  0                               & 0   & \ker \gamma^{6} &      0                           & 0\\
\hline
s = 5          & 0               &  \coker{\gamma^{5}}              & 0   & 0               & \coker{\gamma^{5}}               & 0 \\
\hline
s = 4          & \ker \gamma^{4} &  \coker{\gamma^{4}}              & 0   & \ker \gamma^{4} & \coker{\gamma^{4}}               & 0\\
\hline
s = 3          & 0               &  \coker{\gamma^{3}}              & 0   & 0               & \coker{\gamma^{3}}               & 0\\
\hline
s = 2          & 0               &  \coker{\gamma^{2}}              & 0   & 0               & \coker{\gamma^{2}}               & 0\\
\hline
s=1            & 0               &  \coker{\gamma^{1}}              & 0   & 0               & \ker j^{1}                       & 0\\
\hline
\hline
s = 0          & 0               & H^{0}(\widetilde{X}) & 0   & 0               & H^{0}(\widetilde{X}) &  0\\
\hline
\hline
               & r=-1            & r=0                       & r=1 & r=1             & r=0                              & r=1\\
\hline 
\end{array}  
\]

\[
\begin{array}{| c || c c c || c c c|} 
\hline
               & \multicolumn{3}{c||}{\EIbidg{2}{3}{r,s}{X}}        & \multicolumn{3}{c|}{\EIbidg{2}{4}{r,s}{X}} \\
\hline
s = 6          & \ker \gamma^{6} &  0                               & 0             & \ker \gamma^{6} &      0                           & 0\\
\hline
s = 5          & 0               &  \coker{\gamma^{5}}              & 0             & 0               & \coker{\gamma^{5}}               & 0 \\
\hline
s = 4          & 0               &  \coker{\gamma^{4}_{|\coim{3}}}  & 0             & 0               & \ker j^{4}                       & 0\\
\hline
s = 3          & 0               &  \ker j^{3}                      & 0             & 0               & \ker j^{3}                       & 0\\
\hline
s = 2          & 0               &  \ker j^{2}                      & \coker{j^{2}} & 0               & \ker j^{2}                       & \coker{j^{2}}\\
\hline
s=1            & 0               &  \ker j^{1}                      & 0             & 0               & \ker j^{1}                       & 0\\
\hline
\hline
s = 0          & 0               & H^{0}(\widetilde{X}) & 0             & 0               & H^{0}(\widetilde{X}) &  0\\
\hline
\hline
               & r=-1            & r=0                              & r=1           & r=1             & r=0                              & r=1\\
\hline 
\end{array}  
\]
We then need to show that $\EIbidg{2}{0}{r,s}{X} \cong \EIbidg{2}{1}{r,s}{X}$ and $\EIbidg{2}{3}{r,s}{X} \cong \EIbidg{2}{4}{r,s}{X}$. The first isomorphism is given by the isomorphism
\[
H^{1}(\widetilde{X}) \cong \ker j^{1} \oplus \im{\gamma^{1}}
\]
from the lemma \ref{lem:properties_3}, we then have $\coker{\gamma^{1}} \cong \ker j^{1}$. 

For the second isomorphism we need to show that 
\[
\coker{\gamma^{4}_{|\coim{3}}} \cong \ker j^{4}.
\]
Which is given by the following lemma

\begin{lem}
\label{lem:properties_4}
We have the following isomorphism
\[
H^{4}(\widetilde{X}) \cong \ker j^{4} \oplus \im{\gamma^{4}_{|\coim{3}}}.
\]
\end{lem}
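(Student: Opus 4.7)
The plan is to establish the decomposition by showing that the sum $\ker j^4 + \im{\gamma^4_{|\coim{3}}}$ is direct and exhausts $H^4(\widetilde{X})$. The key inputs are: the definition of $\coim{3}$ as a chosen complement of $\ker j^4_\sharp$ inside $H^2(D)$, so that $H^2(D) = \ker j^4_\sharp \oplus \coim{3}$; and the fact from lemma \ref{lem:properties_2} that $j^4_\sharp \colon H^2(D) \to H^4(D)$ is surjective. Note that unlike the cases $k=1,3,5$ treated in lemma \ref{lem:properties_3}, the map $\gamma^4$ itself is not injective, which is precisely why one must restrict to the subspace $\coim{3}$.

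For the intersection being trivial: suppose $y \in \ker j^4 \cap \im{\gamma^4_{|\coim{3}}}$, so $y = \gamma^4(a)$ for some $a \in \coim{3}$ and $j^4(y) = 0$. Then $j^4_\sharp(a) = j^4(\gamma^4(a)) = j^4(y) = 0$, so $a \in \ker j^4_\sharp \cap \coim{3} = 0$, giving $y = 0$.

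For the sum exhausting $H^4(\widetilde{X})$: let $x \in H^4(\widetilde{X})$. Since $j^4_\sharp$ is surjective, there exists $b \in H^2(D)$ with $j^4_\sharp(b) = j^4(x)$. Decompose $b = b_0 + b_1$ with $b_0 \in \ker j^4_\sharp$ and $b_1 \in \coim{3}$. Then $j^4(x) = j^4_\sharp(b_1) = j^4(\gamma^4(b_1))$, so $x - \gamma^4(b_1) \in \ker j^4$, yielding the decomposition $x = (x - \gamma^4(b_1)) + \gamma^4(b_1)$ with the first summand in $\ker j^4$ and the second in $\im{\gamma^4_{|\coim{3}}}$.

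I do not expect a serious obstacle here; the main point is conceptual rather than technical, namely recognising that the failure of $\gamma^4$ to be injective is exactly measured by $\ker j^4_\sharp$, and the choice of $\coim{3}$ as a complement of this kernel is precisely what restores the direct sum decomposition analogous to lemma \ref{lem:properties_3}. The argument is essentially a linear-algebra lift once the right complement is chosen, and no further geometric input beyond the surjectivity of $j^4_\sharp$ is needed.
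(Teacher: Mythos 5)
Your proof is correct and follows essentially the same route as the paper: both rest on the factorization $j^{4}_{\sharp}=j^{4}\circ\gamma^{4}$, the surjectivity of $j^{4}_{\sharp}$, and the choice of $\coim{3}$ as a complement of $\ker j^{4}_{\sharp}$ in $H^{2}(D)$. If anything, your explicit two-step verification (trivial intersection plus spanning) is cleaner than the paper's argument, which restricts a commutative triangle to isomorphisms and is a little terse about why $\gamma^{4}_{|\coim{3}}$ lands in the chosen complement $(\ker j^{4})^{\perp}$.
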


\begin{proof}
Denote by $(\ker j^{4})^{\perp}$ a complementary subspace of $\ker j^{4} \subset H^{4}(\widetilde{X})$. The maps $j_{\sharp}^{4}$ and $j^{4}$ are surjective by lemma \ref{lem:properties_1}. We then have the following commutative diagram
\[
\begin{tikzpicture}[description/.style={fill=white,inner sep=2pt}]
\matrix (m)[matrix of math nodes, row sep=3em, column sep=5em, text height=2ex, text depth=0.25ex]
{ H^{2}(D) \cong \ker j_{\sharp}^{4}  \oplus \coim{3} &  H^{4}(\widetilde{X}) \cong\ker j^{4} \oplus (\ker j^{4})^{\perp}  \\
                                       &  H^{4}(D) \\};

\path[->]
(m-1-1) edge node[auto] {$\gamma^{4}$} (m-1-2);
\path[->>]
(m-1-2) edge node[auto] {$j^{4}$} (m-2-2);
\path[->>]
(m-1-1) edge node[description] {$j_{\sharp}^{4}$} (m-2-2);
\end{tikzpicture}
\]

By definition of $\coim{3}$  we have $\gamma^{4}_{|} \col \coim{3} \rightarrow (\ker j^{4})^{\perp}$. The commutative diagram restricts then to the following commutative diagram where the restrictions $j_{\sharp |}^{4}$ and $j^{4}_{|}$ are isomorphisms. Which finishes the proof.
\[
\begin{tikzpicture}[description/.style={fill=white,inner sep=2pt}]
\matrix (m)[matrix of math nodes, row sep=3em, column sep=5em, text height=2ex, text depth=0.25ex]
{ \coim{3} & (\ker j^{4})^{\perp}  \\
           & H^{4}(D) \\};

\path[->]
(m-1-1) edge node[auto] {$\gamma^{4}_{|}$} (m-1-2);
\path[->]
(m-1-2) edge node[auto] {$j^{4}_{|}$} (m-2-2);
\path[->]
(m-1-1) edge node[description] {$j_{\sharp |}^{4}$} (m-2-2);
\end{tikzpicture}
\]
\end{proof}

The second terms of the spectral sequences of $\EIbidg{2}{p}{r,s}{X}$ for $\overline{p} \in \lbrace \overline{0}, \overline{2}, \overline{4} \rbrace$ are finally.

\[
\hspace{-4.25em}
\begin{array}{| c || c c c || c c c|| c  c  c |} 
\hline
               & \multicolumn{3}{c||}{\EIbidg{2}{0}{r,s}{X}}        & \multicolumn{3}{c||}{\EIbidg{2}{2}{r,s}{X}}                              & \multicolumn{3}{c|}{\EIbidg{2}{4}{r,s}{X}} \\
\hline
s = 6          & \ker \gamma^{6} &  0                               & 0   & \ker \gamma^{6} &      0                           & 0             & \ker \gamma^{6} &  0                               &  0 \\
\hline
s = 5          & 0               &  \coker{\gamma^{5}}              & 0   & 0               & \coker{\gamma^{5}}               & 0             & 0               &  \coker{\gamma^{5}}              &  0\\
\hline
s = 4          & \ker \gamma^{4} &  \coker{\gamma^{4}}              & 0   & \ker \gamma^{4} & \coker{\gamma^{4}}               & 0             & 0               &  \ker j^{4}                      & 0 \\
\hline
s = 3          & 0               &  \coker{\gamma^{3}}              & 0   & 0               & \coker{\gamma^{3}}               & 0             & 0               &  \ker j^{3}                      & 0\\
\hline
s = 2          & 0               &  \coker{\gamma^{2}}              & 0   & 0               & \ker j^{2}                       & \coker{j^{2}} & 0               &  \ker j^{2}                      & \coker{j^{2}}\\
\hline
s=1            & 0               &  \coker{\gamma^{1}}              & 0   & 0               & \ker j^{1}                       & 0             & 0               &  \ker j^{1}                      & 0 \\
\hline
\hline
s = 0          & 0               & H^{0}(\widetilde{X}) & 0   & 0               & H^{0}(\widetilde{X}) & 0             & 0               & H^{0}(\widetilde{X}) & 0 \\
\hline
\hline
               & r=-1            & r=0                              & r=1 & r=-1            & r=0                              & r=1           & r=-1            & r=0                              & r=1\\
\hline 
\end{array}  
\]

We are now ready to state the following theorem.
\subsection{Statement and proof}

In \cite[theorem E]{Chataur2012} it is proved that any nodal hypersurface $X$ in $\mathbf{C}P^{4}$ is GM-intersection-formal, meaning that their perverse rational models $\IA{\bullet}{X}$ is quasi-isomorphic to their intersection cohomology algebras $\IH{\bullet}{\ast}{X}$. This result is extended in \cite[theorem 4.5]{Chataur} to the case of complex projective varieties of dimension $n$ with only isolated singularities and $(n-2)$-connected links using mixed Hodge structures. We show, using the same ideas, that for $X$ a complex projective algebraic 3-fold with isolated singularities and simply connected links, the intersection spaces are formal topological spaces.

\begin{thm}
\label{thm:3fold_formal}
Let $X$ be a complex projective algebraic 3-fold with isolated singularities and denote by $\Sigma = \lbrace \sigma_{1}, \dots, \sigma_{\nu}\rbrace$ the singular locus of $X$. Assume that there is a resolution of singularities $f \col \widetilde{X} \rightarrow X$ such that the exceptional divisor $D := f^{-1}(\Sigma)$ is smooth and the link $L_{i}$ of $\sigma_{i}$ in $X$, for all $\sigma_{i} \in \Sigma$, is simply connected. Then $\I{p}{X}$ is formal over $\mathbf{C}$ for all $\overline{p} \in \lbrace \overline{\infty}, \overline{0}, \dots, \overline{4} \rbrace$.
\end{thm}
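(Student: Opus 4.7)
My plan is to reduce formality of $\I{p}{X}$ over $\mathbf{C}$ to a purely algebraic statement about the coperverse weight spectral sequence $\EI{1}{\bullet}{X}$. By Theorem \ref{thm:AI_to_EI1}, there is an isomorphism $\AI{\bullet}{X} \otimes \mathbf{C} \cong \EI{1}{\bullet}{X} \otimes \mathbf{C}$ in $\Ho{\cdga{C}}$, so it suffices to produce, for each perversity $\overline{p} \in \lbrace \overline{\infty}, \overline{0}, \dots, \overline{4}\rbrace$, a zig-zag of quasi-isomorphisms of cdgas
\[
\EI{1}{p}{X} \otimes \mathbf{C} \longleftarrow \FI{p}{X} \longrightarrow (\HI{p}{\ast}{X} \otimes \mathbf{C}, 0)
\]
that is compatible with the poset maps $\pomap{p}{q}$. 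This is the same pattern as \cite[Theorem 4.5]{Chataur} and \cite[Theorem E]{Chataur2012}, adapted to intersection spaces via the $\mathcal{I}^{s}_{k}$ pullback construction.

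The model $\FI{p}{X}$ will be a sub-bigraded-differential algebra of $\EI{1}{p}{X} \otimes \mathbf{C}$ on which the differential vanishes and whose inclusion realizes the cohomology. To build it I will choose, using the pure Hodge structures on $H^{\ast}(\widetilde{X})$ and $H^{\ast}(D)$, compatible splittings of the exact sequences defining the kernels and cokernels of $j^{k}$, $j^{k}_{\sharp}$ and $\gamma^{k}$ that appear in the tables $\EIbidg{2}{p}{r,s}{X}$ computed in the preparatory work. Concretely, I pick a Hodge-theoretic complement $(\ker j^{k})^{\perp}$ inside $H^{k}(\widetilde{X})$ giving $H^{k}(\widetilde{X}) = \ker j^{k} \oplus (\ker j^{k})^{\perp}$ (as already used in Lemma \ref{lem:properties_4} to handle the perversity $\overline{3}$), together with analogous Hodge-theoretic complements splitting $j^{k}_{\sharp}$ and $\gamma^{k}$ and a section of $\coim{p} \hookrightarrow H^{p-1}(D)$. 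Transporting these choices through the defining pullback squares for $\mathcal{I}^{s}_{0}$ and $\mathcal{I}^{s}_{1}$ produces in each bidegree $(r,s)$ a subspace $\FI{p}{X}^{r,s} \subset \EIbidg{1}{p}{r,s}{X}$ consisting of cocycles that projects isomorphically onto $\EIbidg{2}{p}{r,s}{X}$.

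The main obstacle, and the bulk of the proof, is to verify that the subspace $\FI{p}{X}$ defined by these splittings is closed under the four products displayed in Section \ref{subsec:smooth_div} and that the poset maps $EI_{1}(\pomap{p}{q})$ restrict to maps $\FI{p}{X} \to \FI{q}{X}$. The product couples the cup products on $\widetilde{X}$ and $D$ via $j^{s}$, $j^{s}_{\sharp}$ and $\gamma^{s}$, and elements of $\mathcal{I}^{s}_{k}$ are constrained by a pullback condition, so the calculation must respect several compatibilities at once. The key algebraic ingredients propagating the splittings through products are the multiplicativity of $j^{s}$ and the projection formula $\gamma^{s}(j^{s}(x) \cdot a) = x \cdot \gamma^{s}(a)$ already highlighted in the preparatory material. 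This has to be checked perversity by perversity; I expect the middle perversity $\overline{2}$ to be the hardest because of the nontrivial $r=1$ column coming from $\coker{j^{2}}$, while the reductions $\I{0}{X} \simeq \I{1}{X}$ and $\I{3}{X} \simeq \I{4}{X}$ coming from simple connectedness of the links (encoded in the isomorphisms $\coker{\gamma^{1}} \cong \ker j^{1}$ and $\coker{\gamma^{4}_{|\coim{3}}} \cong \ker j^{4}$ of Lemmas \ref{lem:properties_3} and \ref{lem:properties_4}) eliminate $\overline{1}$ and $\overline{3}$ from the case analysis, and the case $\overline{\infty}$ reduces to the formality of the spectral sequence of the smooth pair $(\widetilde{X}, D)$.

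Once closure under product and poset maps is established, the vanishing of the differential on $\FI{p}{X}$ and the fact that the inclusion $\FI{p}{X} \hookrightarrow \EI{1}{p}{X} \otimes \mathbf{C}$ induces an isomorphism on cohomology are built in by construction; the induced projection $\FI{p}{X} \to \HI{p}{\ast}{X} \otimes \mathbf{C}$ is then an isomorphism of cdgas, and assembling the data across perversities yields the required zig-zag of quasi-isomorphisms of coperverse cdgas over $\mathbf{C}$, proving formality of $\I{p}{X}$ for every $\overline{p} \in \lbrace \overline{\infty}, \overline{0}, \dots, \overline{4}\rbrace$.
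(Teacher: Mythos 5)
Your overall strategy — reduce to $\EI{1}{\bullet}{X}\otimes\mathbf{C}$ via Theorem \ref{thm:AI_to_EI1} and then build a roof $\EI{1}{p}{X} \leftarrow \FI{p}{\ast,\ast}{X} \rightarrow (\EIbidg{2}{p}{\ast,\ast}{X},0)$ compatible with poset maps, treating only $\overline{\infty},\overline{4},\overline{2},\overline{0}$ because $\I{0}{X}\simeq\I{1}{X}$ and $\I{3}{X}\simeq\I{4}{X}$ — is exactly the paper's. But there is a genuine gap in how you specify the middle term. You require $\FI{p}{\ast,\ast}{X}$ to be a sub-bigraded algebra of cocycles \emph{with vanishing differential} that projects \emph{isomorphically} onto $\EIbidg{2}{p}{\ast,\ast}{X}$ in every bidegree. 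No choice of Hodge-theoretic splittings can achieve this in general. Take the middle perversity and two classes $x,x'\in\ker j^{2}=\EIbidg{2}{2}{0,2}{X}$: their product satisfies $j^{4}(xx')=j^{2}(x)j^{2}(x')=0$, so $xx'\in\ker j^{4}$, and by Lemma \ref{lem:properties_4} one has $\ker j^{4}\cap\im{\gamma^{4}}\neq\lbrace 0\rbrace$ in general. Thus $xx'$ can be a nonzero cocycle whose class in $\EIbidg{2}{2}{0,4}{X}=\coker{\gamma^{4}}$ vanishes. A multiplicatively closed subspace containing your lifts of $\ker j^{2}$ must contain $xx'$, yet a subspace projecting isomorphically onto $\coker{\gamma^{4}}$ cannot contain a nonzero element mapping to zero. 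Your two requirements are therefore incompatible; equivalently, the claim that the projection $\FI{p}{\ast,\ast}{X}\rightarrow\HI{p}{\ast}{X}\otimes\mathbf{C}$ is an \emph{isomorphism} of cdga's is strictly stronger than formality and fails here.

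The paper resolves precisely this point by allowing the middle term to carry a non-trivial differential: for $\overline{2}$ it keeps all of $H^{2}(D)$ in bidegree $(-1,4)$ and all of $H^{4}(\widetilde{X})$ in bidegree $(0,4)$ with $\diff{2}{-1,4}=\gamma^{4}$, so that products landing in $\im{\gamma^{4}}$ become exact in $\FI{2}{\ast,\ast}{X}$ and are killed only after passing to cohomology, the map $\phip{2}{\ast,\ast}$ being a quasi-isomorphism rather than an isomorphism. The same device is needed for $\gamma^{5},\gamma^{6}$ at the top perversity and, for $\overline{0}$, an extra summand $\D{(\ker\gamma^{4})}\otimes t$ in bidegree $(0,2)$ with differential given by $\partial/\partial t$ into the $(1,2)$ column $\D{(\ker\gamma^{4})}\otimes dt$ — none of which are subspaces of cocycles. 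So the construction of $\FI{p}{\ast,\ast}{X}$ has to be redone with these differentials present; once that is corrected, the verification of closure under the four products and of compatibility with $EI_{1}(\pomap{p}{q})$, $EI_{2}(\pomap{p}{q})$ that you rightly identify as the bulk of the work goes through as in the paper.
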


By theorem \ref{thm:AI_to_EI1} there is a string of quasi-isomorphisms of coperverse cdga's from $\AI{\bullet}{X}\otimes \mathbf{C}$ to $\EI{1}{\bullet}{X}\otimes \mathbf{C}$. Moreover we have $\EIbidg{2}{\bullet}{\ast,\ast}{X} \cong \HI{\bullet}{\ast}{X}$. We follow this pattern
\begin{enumerate}
\item We define a bigraded differential algebra $(\FI{p}{r,s}{X}, \diff{p}{r,s})$ step by step for the perversities $\overline{4}$, $\overline{2}$ and $\overline{0}$.
\begin{itemize}
\item When needed, we then define the poset map $\pomap{p}{q} \col \FI{p}{r,s}{X} \rightarrow \FI{q}{r,s}{X}$ and show its compatibility with the product and the differential.
\end{itemize}
\item We define the quasi-isomorphisms
\[
(\EIbidg{1}{p}{r,s}{X},\difb{p}{r,s}) \overset{\psip{p}{r,s}}{\longleftarrow} (\FI{p}{r,s}{X},\diff{p}{r,s}) \overset{\phip{p}{r,s}}{\longrightarrow} (\EIbidg{2}{p}{r,s}{X},0)
\]
and check their compatibility with the products and differentials.
\begin{itemize}
\item When needed, we then check the compatibility of the maps $\psip{\bullet}{\ast,\ast}$ and $\phip{\bullet}{\ast,\ast}$  with the poset map $\pomap{p}{q} \col \FI{p}{r,s}{X} \rightarrow \FI{q}{r,s}{X}$.
\end{itemize}
\end{enumerate}

\subsubsection{The top perversity}
We begin with the top perversity $\overline{t} = \overline{4}$. We define the bigraded differential algebra $(\FI{4}{r,s}{X}, \diff{4}{r,s})$.
\[
\begin{array}{| c || c c c |} 
\hline
s = 6          & H^{4}(D) & H^{6}(\widetilde{X}) &  0  \\
\hline
s = 5          & H^{3}(D) &  H^{5}(\widetilde{X})&  0  \\
\hline
s = 4          & 0        &  \ker j^{4}          &  0  \\
\hline
s = 3          & 0        &  \ker j^{3}          &  0  \\
\hline
s = 2          & 0        &  \ker j^{2}          & \D{(\ker \gamma^{4})} \otimes dt  \\
\hline
s=1            & 0        &  \ker j^{1}          & 0  \\
\hline
\hline
s = 0          & 0        & H^{0}(\widetilde{X}) & 0 \\
\hline
\hline
\FI{4}{r,s}{X} & r=-1     & r=0                  & r=1 \\
\hline 
\end{array}  
\]
The only non-trivial differentials are $\diff{4}{-1,s} \col H^{s-2}(D) \rightarrow H^{s}(\widetilde{X})$ given by $\diff{4}{-1,s} = \gamma^{s}$ for $s=5,6$. The algebra structure is defined by $\FI{4}{0,s}{X} \times \FI{4}{0,s'}{X} \rightarrow \FI{4}{0,s+s'}{X}$.

Let's now define the map $\psip{4}{\ast,\ast} \col \FI{4}{\ast,\ast}{X} \rightarrow \EIbidg{1}{4}{\ast,\ast}{X}$. Recall that we have the following first term for the weight spectral sequence.

\noindent\makebox[\textwidth]{
\(
\begin{array}{| c || c c c c c |} 
\hline
s \geq 5        & H^{s-2}(D)\otimes \mathbf{Q}[t] & \rightarrow & \mathcal{I}^{s}_{0} \oplus H^{s-2}(D)\otimes \mathbf{Q}[t]dt & \rightarrow & H^{s}(D)\otimes \mathbf{Q}[t]dt  \\
\hline
1 \leq s \leq 4 & H^{s-2}(D)\otimes \mathbf{Q}[t]t & \rightarrow & \mathcal{I}^{s}_{1} \oplus H^{s-2}(D)\otimes \mathbf{Q}[t]dt & \rightarrow & H^{s}(D)\otimes \mathbf{Q}[t]dt  \\
\hline
\hline
s = 0          & 0                                            & & \mathcal{I}^{0}_{0} & \rightarrow & H^{0}(D)\otimes \mathbf{Q}[t]dt  \\
\hline
\hline
\EIbidg{1}{4}{r,s}{X}               & r=-1                                         &             & r=0                              &             & r=1 \\
\hline 
\end{array}  
\)}

For $r=s=0$, the map $\psip{4}{0,0}$ is the identity map.For $r=0$, $s>0$, the map $\psip{4}{0,s} \col \FI{4}{0,s}{X} \rightarrow \EIbidg{1}{4}{0,s}{X}$ is defined to be
\[
\psip{4}{0,s}(x) := (x, j^{s}(x)).
\]

For $r=-1$, $\psip{4}{-1,s}$ is defined to be the canonical inclusion. 

By lemma \ref{lem:DP_ker_coker} we have $\D{(\ker \gamma^{4})} \cong \coker{j^{2}} \subset H^{2}(D)$, we then define $\psip{4}{1,2}$ to be the injective map 
\[
\psip{4}{1,2} \col \D{(\ker \gamma^{4})} \otimes dt \longrightarrow \EIbidg{1}{4}{1,2}{X} = H^{2}(D)\otimes \mathbf{Q}[t]dt.
\]

By definition $\mathcal{I}^{s}_{k}$, $k \in \lbrace 0,1 \rbrace$, is the vector space given by the following pullback square.
\[
\begin{tikzpicture}
\matrix (m)[matrix of math nodes, row sep=3em, column sep=5em, text height=2ex, text depth=0.25ex]
{ \mathcal{I}^{s}_{k}  &  H^{s}(D)\otimes \mathbf{Q}[t]t^{k}  \\
  H^{s}(\widetilde{X}) &  H^{s}(D) \\};

\path[->]
(m-1-1) edge node[auto] {} (m-1-2);
\path[->]
(m-2-1) edge node[auto] {$j^{s}$} (m-2-2);
\path[->]
(m-1-1) edge node[above=1em, right=1em] {$\ulcorner$} (m-2-1);
\path[->]
(m-1-2) edge node[auto] {$\delta_{1}$} (m-2-2);
\end{tikzpicture}
\]
We have $\mathcal{I}^{s}_{1} \subset \mathcal{I}^{s}_{0}$, the map $\psip{4}{0,s}(x) := (x, j^{s}(x))$ is then compatible with the algebra structure of $\FI{4}{\ast,\ast}{X}$. The commutativity of the following diagrams
\[
\begin{tikzpicture}[injection/.style={right hook->,fill=white, inner sep=2pt}]
\matrix (m)[matrix of math nodes, row sep=3em, column sep=5em, text height=2ex, text depth=0.25ex]
{ \FI{4}{-1,s}{X}        &  \FI{4}{0,s}{X}  \\
  \EIbidg{1}{4}{-1,s}{X} &  \EIbidg{1}{4}{0,s}{X} \\};

\path[->]
(m-1-1) edge node[auto] {$\diff{4}{-1,s} = \gamma^{s}$} (m-1-2);
\path[->]
(m-2-1) edge node[auto,swap] {$\difb{4}{-1,s}$} (m-2-2);
\path[injection]
(m-1-1) edge node[auto,swap] {$\psip{4}{-1,s}$} (m-2-1);
\path[->]
(m-1-2) edge node[auto] {$\psip{4}{0,s}$} (m-2-2);
\end{tikzpicture}
\quad
\begin{tikzpicture}[injection/.style={right hook->,fill=white, inner sep=2pt}]
\matrix (m)[matrix of math nodes, row sep=3em, column sep=5em, text height=2ex, text depth=0.25ex]
{ \ker j^{2}             &  \D{(\ker \gamma^{4})} \otimes dt  \\
  \EIbidg{1}{4}{0,2}{X}  &  \EIbidg{1}{4}{1,2}{X} \\};

\path[->]
(m-1-1) edge node[auto] {$0$} (m-1-2);
\path[->]
(m-2-1) edge node[auto,swap] {$\difb{4}{-1,s}$} (m-2-2);
\path[->]
(m-1-1) edge node[auto,swap] {$\psip{4}{0,2}$} (m-2-1);
\path[->]
(m-1-2) edge node[auto] {$\psip{4}{1,2}$} (m-2-2);
\end{tikzpicture}
\]
concludes that we have a quasi-isomorphism $\psip{4}{\ast,\ast} \col \FI{4}{\ast,\ast}{X} \rightarrow \EIbidg{1}{4}{\ast,\ast}{X}$.

We now detail the map $\phip{4}{\ast,\ast} \col \FI{4}{\ast,\ast}{X} \rightarrow \EIbidg{2}{4}{\ast,\ast}{X}$. 

For $r=-1$, $\phip{4}{-1,s}$ is non zero only for $s=6$ where it is the projection $H^{4}(D) \twoheadrightarrow \ker \gamma^{6}$.

For $r=0$, since $\FI{4}{0,s}{X} = \ker \difb{4}{0,s}$ for all $s$, we define the map $\phip{4}{0,s}$ to be the surjection $\phip{4}{0,s} \col \ker \difb{4}{0,s} \twoheadrightarrow \EIbidg{2}{4}{0,s}{X}$.

For $r=1$, the assignation $\D{(\ker \gamma^{4})} \otimes dt \mapsto \coker{j^{2}}$ defines $\phip{4}{1,2}$ and $\phip{4}{1,s}$ is zero for any other $s$.

Since we have $\ker \difb{4}{0,s} \times \ker \difb{4}{0,s'} \rightarrow \ker \difb{4}{0,s+s'}$, the map $\phip{4}{\ast,\ast}$ is compatible with the algebra structure of $\FI{4}{\ast,\ast}{X}$.

The map $\phip{4}{\ast,\ast}$ is also compatible with the two non zero differentials of $\FI{4}{\ast,\ast}{X}$ since the two following diagrams are commutative.
\[
\begin{tikzpicture}[injection/.style={right hook->,fill=white, inner sep=2pt}]
\matrix (m)[matrix of math nodes, row sep=3em, column sep=5em, text height=2ex, text depth=0.25ex]
{ H^{4}(D)        &  H^{6}(\widetilde{X})  \\
  \ker \gamma^{6} &  0 \\};

\path[->]
(m-1-1) edge node[auto] {$\gamma^{6}$} (m-1-2);
\path[->]
(m-2-1) edge  (m-2-2);
\path[->>]
(m-1-1) edge node[auto,swap] {$\phip{4}{-1,6}$} (m-2-1);
\path[->]
(m-1-2) edge (m-2-2);
\end{tikzpicture}
\quad
\begin{tikzpicture}[injection/.style={right hook->,fill=white, inner sep=2pt}]
\matrix (m)[matrix of math nodes, row sep=3em, column sep=5em, text height=2ex, text depth=0.25ex]
{ H^{3}(D) &  H^{5}(\widetilde{X})  \\
  0        &  \EIbidg{2}{4}{0,5}{X} = \coker{\gamma^{5}} \\};

\path[injection]
(m-1-1) edge node[auto] {$\gamma^{5}$} (m-1-2);
\path[->>]
(m-1-2) edge node[auto] {$\phip{4}{0,5}$} (m-2-2);
\path[->]
(m-2-1) edge  (m-2-2);
\path[->]
(m-1-1) edge  (m-2-1);
\end{tikzpicture}
\]

We then have a quasi-isomorphism of algebras
\[
(\EIbidg{1}{4}{r,s}{X},\difb{4}{r,s}) \overset{\psip{4}{r,s}}{\longleftarrow} (\FI{4}{r,s}{X},\diff{4}{r,s}) \overset{\phip{4}{r,s}}{\longrightarrow} (\EIbidg{2}{4}{r,s}{X},0).
\]

\subsubsection{The middle perversity}
We define the bigraded differential algebra $(\FI{2}{r,s}{X},\diff{2}{r,s})$ as the sub-algebra of $(\EI{1}{2}{X},\difb{2}{r,s})$ given by
\[
\hspace{-1em}
\begin{array}{| c || c c c |} 
\hline
s = 6          & H^{4}(D) & H^{6}(\widetilde{X}) &  0  \\
\hline
s = 5          & H^{3}(D) &  H^{5}(\widetilde{X})&  0  \\
\hline
s = 4          & H^{2}(D) &  H^{4}(\widetilde{X})&  0  \\
\hline
s = 3          & 0        &  \ker j^{3}          &  0  \\
\hline
s = 2          & 0        &  \ker j^{2}          & \D{(\ker \gamma^{4})} \otimes dt  \\
\hline
s=1            & 0        &  \ker j^{1}          & 0  \\
\hline
\hline
s = 0          & 0        & H^{0}(\widetilde{X}) & 0 \\
\hline
\hline
\FI{2}{r,s}{X} & r=-1     & r=0                  & r=1 \\
\hline 
\end{array}  
\]
Where $(\EI{1}{2}{X},\difb{2}{r,s})$ is given by

\noindent\makebox[\textwidth]{
\(
\begin{array}{| c || c c c c c |} 
\hline
s \geq 3        & H^{s-2}(D)\otimes \mathbf{Q}[t] & \rightarrow & \mathcal{I}^{s}_{0} \oplus H^{s-2}(D)\otimes \mathbf{Q}[t]dt & \rightarrow & H^{s}(D)\otimes \mathbf{Q}[t]dt  \\
\hline
1 \leq s \leq 2 & H^{s-2}(D)\otimes \mathbf{Q}[t]t & \rightarrow & \mathcal{I}^{s}_{1} \oplus H^{s-2}(D)\otimes \mathbf{Q}[t]dt & \rightarrow & H^{s}(D)\otimes \mathbf{Q}[t]dt  \\
\hline
\hline
s = 0          & 0                                            & & \mathcal{I}^{0}_{0} & \rightarrow  & H^{0}(D)\otimes \mathbf{Q}[t]dt  \\
\hline
\hline
\EIbidg{1}{2}{r,s}{X}               & r=-1                                         &             & r=0                              &             & r=1 \\
\hline 
\end{array}  
\)}

Compared to $\FI{4}{\ast,\ast}{X}$, we added $H^{2}(D)$ in bidegree $(-1,4)$ and replaced $\ker j^{4}$ by $H^{4}(\widetilde{X})$ in bidegree $(0,4)$, both are related by a new non-trivial differential $\diff{2}{-1,4} = \gamma^{4}$. 

The algebra structure is still non-trivial only for $r=0$, with 
\[
\FI{2}{0,s}{X} \times \FI{2}{0,s'}{X} \rightarrow \FI{2}{0,s+s'}{X}.
\]

The map $\pomap{4}{2} \col \FI{4}{\ast,\ast}{X} \rightarrow \FI{2}{\ast,\ast}{X}$ is then the canonical inclusion, which is clearly compatible with the differential and the algebra structure.

To construct $\psip{2}{\ast,\ast} \col \FI{2}{\ast,\ast}{X} \rightarrow \EIbidg{1}{2}{\ast,\ast}{X}$, we extend $\psip{4}{\ast,\ast}$, meaning that $\psip{2}{-1,s}$ is the inclusion, $\psip{2}{0,s}(x) = (x, j^{s}(x))$ and $\psip{2}{1,s} = \psip{4}{1,s}$. The algebra structure is preserved by $\psip{2}{0,s}$ and the following diagram commutes
\[
\begin{tikzpicture}[injection/.style={right hook->,fill=white, inner sep=2pt}]
\matrix (m)[matrix of math nodes, row sep=3em, column sep=5em, text height=2ex, text depth=0.25ex]
{ H^{2}(D)                &  H^{4}(\widetilde{X})  \\
  \EIbidg{1}{2}{-1,4}{X}  &  \EIbidg{1}{2}{0,4}{X} \\};

\path[->]
(m-1-1) edge node[auto] {$\diff{2}{-1,4} = \gamma^{4}$} (m-1-2);
\path[->]
(m-2-1) edge node[auto,swap] {$\difb{2}{-1,4}$} (m-2-2);
\path[injection]
(m-1-1) edge node[auto,swap] {$\psip{2}{-1,4}$} (m-2-1);
\path[->]
(m-1-2) edge node[auto] {$\psip{2}{0,4}$} (m-2-2);
\end{tikzpicture}
\]
The rest being the same as for the top perversity, we have a quasi-isomorphism
\[
\psip{2}{\ast,\ast} \col \FI{2}{\ast,\ast}{X} \longrightarrow \EIbidg{1}{2}{\ast,\ast}{X}.
\]

We now construct $\phip{2}{\ast,\ast} \col \FI{2}{\ast,\ast}{X} \rightarrow \EIbidg{2}{2}{\ast,\ast}{X}$. 

First of all nothing changes for $r=1$ and $\phip{2}{1,s}=\phip{4}{1,s}$.

For $r=-1$, $\phip{2}{-1,s}$ is non zero only for $s=4,6$ where it is the projection $H^{s-2}(D) \twoheadrightarrow \ker \gamma^{s}$.

For $r=0$, since $\FI{2}{0,s}{X} = \ker \difb{2}{0,s}$ for all $s \neq 3$, we define the map $\phip{2}{0,s}$ to be the surjection $\phip{2}{0,s} \col \ker \difb{2}{0,s} \twoheadrightarrow \EIbidg{2}{2}{0,s}{X}$. For $s=3$, by lemma \ref{lem:properties_3} we have $\ker j^{3} \cong \coker{\gamma^{3}}$, this isomorphism defines $\phip{2}{0,3}$.

For $s=4,6$ or $s=5$, the following diagrams commute
\[
\begin{tikzpicture}[injection/.style={right hook->,fill=white, inner sep=2pt}]
\matrix (m)[matrix of math nodes, row sep=3em, column sep=5em, text height=2ex, text depth=0.25ex]
{ H^{s-2}(D)        &  H^{s}(\widetilde{X})  \\
  \ker \gamma^{s}   &  \coker{\gamma^{s}} \\};

\path[->]
(m-1-1) edge node[auto] {$\gamma^{s}$} (m-1-2);
\path[->]
(m-2-1) edge node[auto,swap] {$0$} (m-2-2);
\path[->>]
(m-1-1) edge node[auto,swap] {$\phip{2}{-1,s}$} (m-2-1);
\path[->>]
(m-1-2) edge node[auto] {$\phip{2}{0,s}$} (m-2-2);
\end{tikzpicture}
\quad
\begin{tikzpicture}[injection/.style={right hook->,fill=white, inner sep=2pt}]
\matrix (m)[matrix of math nodes, row sep=3em, column sep=5em, text height=2ex, text depth=0.25ex]
{ H^{3}(D) &  H^{5}(\widetilde{X})  \\
  0        &  \EIbidg{2}{2}{0,5}{X} = \coker{\gamma^{5}} \\};

\path[injection]
(m-1-1) edge node[auto] {$\gamma^{5}$} (m-1-2);
\path[->>]
(m-1-2) edge node[auto] {$\phip{2}{0,5}$} (m-2-2);
\path[->]
(m-2-1) edge  (m-2-2);
\path[->]
(m-1-1) edge  (m-2-1);
\end{tikzpicture}
\]
So $\phip{2}{\ast,\ast}$ is compatible with the differential. 

To see its compatibility with the algebra structure of $\FI{2}{\ast,\ast}{X}$ we have to check the commutativity of the following diagram
\[
\begin{tikzpicture}
\matrix (m)[matrix of math nodes, row sep=3em, column sep=5em, text height=2ex, text depth=0.25ex]
{ \ker j^{1} \times \ker j^{2}        &  \ker j^{3}  \\
  \coker{\gamma^{1}} \times \coker{\gamma^{2}}  &  \coker{\gamma^{3}} \\};

\path[->]
(m-1-1) edge node[auto] {} (m-1-2);
\path[->]
(m-2-1) edge node[auto,swap] {} (m-2-2);
\path[->]
(m-1-1) edge node[auto,swap] {} (m-2-1);
\path[->]
(m-1-2) edge node[auto] {$\cong$} (m-2-2);
\end{tikzpicture}
\]

We then have a quasi-isomorphism of algebras
\[
(\EIbidg{1}{2}{r,s}{X},\difb{2}{r,s}) \overset{\psip{2}{r,s}}{\longleftarrow} (\FI{2}{r,s}{X},\diff{2}{r,s}) \overset{\phip{2}{r,s}}{\longrightarrow} (\EIbidg{2}{2}{r,s}{X},0).
\]

We now check the commutativity of the following diagram
\[
\begin{tikzpicture}[injection/.style={right hook->,fill=white, inner sep=2pt}]
\matrix (m)[matrix of math nodes, row sep=3em, column sep=5em, text height=2ex, text depth=0.25ex]
{ (\EIbidg{1}{4}{r,s}{X},\difb{4}{r,s})  &  (\FI{4}{r,s}{X},\diff{4}{r,s})  & (\EIbidg{2}{4}{r,s}{X},0) \\
  (\EIbidg{1}{2}{r,s}{X},\difb{2}{r,s})  &  (\FI{2}{r,s}{X},\diff{2}{r,s})  & (\EIbidg{2}{2}{r,s}{X},0)\\};

\path[->]
(m-1-2) edge node[auto,swap] {$\psip{4}{r,s}$} (m-1-1);
\path[->]
(m-1-2) edge node[auto] {$\phip{4}{r,s}$} (m-1-3);

\path[->]
(m-2-2) edge node[auto] {$\psip{2}{r,s}$} (m-2-1);
\path[->]
(m-2-2) edge node[auto,swap] {$\phip{2}{r,s}$} (m-2-3);

\path[->]
(m-1-1) edge node[auto,swap] {$EI_{1}(\pomap{4}{2})$} (m-2-1);
\path[injection]
(m-1-2) edge node[auto] {$\pomap{4}{2}$} (m-2-2);
\path[->]
(m-1-3) edge node[auto] {$EI_{2}(\pomap{4}{2})$} (m-2-3);
\end{tikzpicture}
\]
The only differences between $\EIbidg{i}{4}{r,s}{X}$ and $\EIbidg{i}{2}{r,s}{X}$, $i=1,2$, arise for $s=3,4$. We then only check these cases. 

The only square that does not trivially commutes for $s=3$ is the following
\[
\begin{tikzpicture}[injection/.style={right hook->,fill=white, inner sep=2pt}]
\matrix (m)[matrix of math nodes, row sep=3em, column sep=5em, text height=2ex, text depth=0.25ex]
{ \mathcal{I}^{3}_{1} \oplus H^{1}(D)\otimes \mathbf{Q}[t]dt  &  \ker j^{3}  & \ker j^{3} \\
  \mathcal{I}^{3}_{0} \oplus H^{1}(D)\otimes \mathbf{Q}[t]dt  &  \ker j^{3}  & \coker{\gamma^{3}}\\};

\path[->]
(m-1-2) edge node[auto,swap] {$\psip{4}{0,3}$} (m-1-1);
\path[->]
(m-1-2) edge node[auto] {$\phip{4}{0,3}$} (m-1-3);

\path[->]
(m-2-2) edge node[auto] {$\psip{2}{0,3}$} (m-2-1);
\path[->]
(m-2-2) edge node[auto,swap] {$\phip{2}{0,3}$} (m-2-3);

\path[injection]
(m-1-1) edge node[auto,swap] {$EI_{1}(\pomap{4}{2})$} (m-2-1);
\path[->]
(m-1-2) edge node[auto] {$\pomap{4}{2}$} node[auto,swap] {=} (m-2-2);
\path[->]
(m-1-3) edge node[auto] {$EI_{2}(\pomap{4}{2})$} (m-2-3);
\end{tikzpicture}
\]
The left hand square commutes because $\im{\psip{4}{0,3}} \subset \mathcal{I}^{3}_{1}$, $\im{\psip{2}{0,3}} \subset \mathcal{I}^{3}_{0}$ and the fact that $\mathcal{I}^{3}_{1} \subset \mathcal{I}^{3}_{0}$. The right hand square commutes because of the isomorphism $\ker j^{3} \cong \coker{\gamma^{3}}$.

For $s=4$, the only square that does not trivially commutes is the following
\[
\begin{tikzpicture}[injection/.style={right hook->,fill=white, inner sep=2pt}]
\matrix (m)[matrix of math nodes, row sep=3em, column sep=5em, text height=2ex, text depth=0.25ex]
{ \mathcal{I}^{4}_{1} \oplus H^{2}(D)\otimes \mathbf{Q}[t]dt  &  \ker j^{4}  & \ker j^{4} \\
  \mathcal{I}^{4}_{0} \oplus H^{2}(D)\otimes \mathbf{Q}[t]dt  &  H^{4}(\widetilde{X})  & \coker{\gamma^{4}}\\};

\path[->]
(m-1-2) edge node[auto,swap] {$\psip{4}{0,4}$} (m-1-1);
\path[->]
(m-1-2) edge node[auto] {$\phip{4}{0,4}$} (m-1-3);

\path[->]
(m-2-2) edge node[auto] {$\psip{2}{0,4}$} (m-2-1);
\path[->]
(m-2-2) edge node[auto,swap] {$\phip{2}{0,4}$} (m-2-3);

\path[injection]
(m-1-1) edge node[auto,swap] {$EI_{1}(\pomap{4}{2})$} (m-2-1);
\path[injection]
(m-1-2) edge node[auto] {$\pomap{4}{2}$} (m-2-2);
\path[->]
(m-1-3) edge node[auto] {$EI_{2}(\pomap{4}{2})$} (m-2-3);
\end{tikzpicture}
\]
The left hand square commutes for the same reason that for $s=3$. We then consider the right hand square. By lemma \ref{lem:properties_4} we have $H^{4}(\widetilde{X}) \cong \ker j^{4} \oplus \im{\gamma^{4}_{|\coim{3}}}$, moreover we have $\im{\gamma^{4}_{|\coim{3}}} \subset \im{\gamma^{4}}$, this implies that 
\[
\ker j^{4} \cap \im{\gamma^{4}} \neq \lbrace 0 \rbrace.
\]
We may then find a direct sum decomposition 
\[
\ker j^{4} = (\ker j^{4} \cap \im{\gamma^{4}}) \oplus C
\]
and defines a map $\ker j^{4} \rightarrow C$ by projection on the second summand. We then have
\[
H^{4}(\widetilde{X}) \cong (\ker j^{4} \cap \im{\gamma^{4}}) \oplus C \oplus \im{\gamma^{4}_{|\coim{3}}},
\]
the maps $EI_{2}(\pomap{4}{2})$ and $\phip{2}{0,4}$ then send the summand $(\ker j^{4} \cap \im{\gamma^{4}}) \oplus \im{\gamma^{4}_{|\coim{3}}}$ to zero and $C$ to its class in $\coker{\gamma^{4}}$. Which makes the right hand square commute.

\subsubsection{The zero perversity}
We define the bigraded differential algebra $(\FI{0}{r,s}{X},\diff{0}{r,s})$ as the sub-algebra of $(\EI{1}{0}{X},\difb{0}{r,s})$ given by
\[
\begin{array}{| c || c c c |} 
\hline
s = 6          & H^{4}(D) & H^{6}(\widetilde{X}) &  0  \\
\hline
s = 5          & H^{3}(D) &  H^{5}(\widetilde{X})&  0  \\
\hline
s = 4          & H^{2}(D) &  H^{4}(\widetilde{X})&  0  \\
\hline
s = 3          & 0        &  \ker j^{3}          &  0  \\
\hline
s = 2          & 0        &  \D{(\ker j^{4})} \oplus  \D{(\ker \gamma^{4})} \otimes t        & \D{(\ker \gamma^{4})} \otimes dt  \\
\hline
s=1            & 0        &  \ker j^{1}          & 0  \\
\hline
\hline
s = 0          & 0        & H^{0}(\widetilde{X}) & 0 \\
\hline
\hline
\FI{0}{r,s}{X} & r=-1     & r=0                  & r=1 \\
\hline 
\end{array}  
\]
Where $(\EI{1}{0}{X},\difb{0}{r,s})$ is given by

\noindent\makebox[\textwidth]{
\(
\begin{array}{| c || c c c c c |} 
\hline
s \geq 1        & H^{s-2}(D)\otimes \mathbf{Q}[t] & \rightarrow & \mathcal{I}^{s}_{0} \oplus H^{s-2}(D)\otimes \mathbf{Q}[t]dt & \rightarrow & H^{s}(D)\otimes \mathbf{Q}[t]dt  \\
\hline
\hline
s = 0          & 0                                            & & \mathcal{I}^{0}_{0} & \rightarrow & H^{0}(D)\otimes \mathbf{Q}[t]dt  \\
\hline
\hline
\EIbidg{1}{0}{r,s}{X}               & r=-1                                         &             & r=0                              &             & r=1 \\
\hline 
\end{array}  
\)}

Compared to $\FI{2}{\ast,\ast}{X}$, we added $\D{(\ker \gamma^{4})} \otimes t$ and replaced $\ker j^{2}$ by $\D{(\ker j^{4})}$ in bidegree $(0,2)$. There is also a new differential $\diff{0}{0,2} \col \D{(\ker \gamma^{4})} \otimes t \rightarrow  \D{(\ker \gamma^{4})} \otimes dt$ which is differentiation with respect to $t$.

The algebra structure is non trivial only for $r=0$ where we have
\[
\begin{cases}
(\D{(\ker \gamma^{4})} \otimes t) \times \FI{0}{0,s}{X} \longrightarrow 0 & \forall \, s, \\
\FI{0}{0,s}{X} \times \FI{0}{0,s'}{X} \longrightarrow \FI{0}{0,s+s'}{X} & \text{otherwise.} \\
\end{cases}
\]

We now define $\pomap{2}{0} \col \FI{2}{\ast,\ast}{X} \rightarrow \FI{2}{\ast,\ast}{X}$. For $s \geq 3$, there is no changes and $\pomap{2}{0}$ is the identity, same if $s=0,1$. For $s=2$, by lemma \ref{lem:properties_3} we have $\ker j^{2} \cap \im{\gamma^{2}} = 0$ so we have the inclusion $\ker j^{2} \rightarrow \D{(\ker j^{4})}$. The map $\pomap{2}{0}$ is then an inclusion and is compatible with the differential and the algebra structure.

We now construct $\psip{0}{\ast,\ast} \col \FI{0}{\ast,\ast}{X} \rightarrow \EIbidg{1}{0}{\ast,\ast}{X}$. Since we have $\D{(\ker j^{4})} \oplus \D{(\ker \gamma^{4})} \otimes t \subset \mathcal{I}^{2}_{0}$ there is no difference between $\psip{0}{\ast,\ast}$ and $\psip{2}{\ast,\ast}$ and the definition is the same. We then have a quasi-isomorphism
\[
\psip{0}{\ast,\ast} \col \FI{0}{\ast,\ast}{X} \rightarrow \EIbidg{1}{0}{\ast,\ast}{X}.
\]

We define $\phip{0}{\ast,\ast} \col \FI{0}{\ast,\ast}{X} \rightarrow \EIbidg{2}{0}{\ast,\ast}{X}$, for $s \geq 3$ there is no difference with the middle perversity. If $s=2$ then we define $\phip{0}{0,2}$ by $\D{(\ker j^{4})} \mapsto \coker{\gamma^{2}}$ and $ \D{(\ker \gamma^{4})} \mapsto 0$, we then have the following commutative diagram.
\[
\begin{tikzpicture}
\matrix (m)[matrix of math nodes, row sep=3em, column sep=5em, text height=2ex, text depth=0.25ex]
{ \D{(\ker j^{4})} \oplus  \D{(\ker \gamma^{4})} \otimes t  &  \D{(\ker \gamma^{4})} \otimes dt  \\
  \EIbidg{2}{0}{0,2}{X} = \coker{\gamma^{2}}  &  0 \\};

\path[->]
(m-1-1) edge node[auto] {$\diff{0}{0,2}$} (m-1-2);
\path[->]
(m-2-1) edge node[auto,swap] {} (m-2-2);
\path[->]
(m-1-1) edge node[auto,swap] {$\phip{0}{0,2}$} (m-2-1);
\path[->]
(m-1-2) edge node[auto] {} (m-2-2);
\end{tikzpicture}
\]
If $s=1$, the isomorphism $\ker j^{1} \cong \coker{\gamma^{1}}$ defines $\phip{0}{0,1}$. 

We then have a quasi-isomorphism of algebras
\[
(\EIbidg{1}{0}{r,s}{X},\difb{0}{r,s}) \overset{\psip{0}{r,s}}{\longleftarrow} (\FI{0}{r,s}{X},\diff{0}{r,s}) \overset{\phip{0}{r,s}}{\longrightarrow} (\EIbidg{2}{0}{r,s}{X},0).
\]

We now check the commutativity of the following diagram
\[
\begin{tikzpicture}[injection/.style={right hook->,fill=white, inner sep=2pt}]
\matrix (m)[matrix of math nodes, row sep=3em, column sep=5em, text height=2ex, text depth=0.25ex]
{ (\EIbidg{1}{2}{r,s}{X},\difb{2}{r,s})  &  (\FI{2}{r,s}{X},\diff{2}{r,s})  & (\EIbidg{2}{2}{r,s}{X},0) \\
  (\EIbidg{1}{0}{r,s}{X},\difb{0}{r,s})  &  (\FI{0}{r,s}{X},\diff{0}{r,s})  & (\EIbidg{2}{0}{r,s}{X},0)\\};

\path[->]
(m-1-2) edge node[auto,swap] {$\psip{2}{r,s}$} (m-1-1);
\path[->]
(m-1-2) edge node[auto] {$\phip{2}{r,s}$} (m-1-3);

\path[->]
(m-2-2) edge node[auto] {$\psip{0}{r,s}$} (m-2-1);
\path[->]
(m-2-2) edge node[auto,swap] {$\phip{0}{r,s}$} (m-2-3);

\path[->]
(m-1-1) edge node[auto,swap] {$EI_{1}(\pomap{2}{0})$} (m-2-1);
\path[injection]
(m-1-2) edge node[auto] {$\pomap{2}{0}$} (m-2-2);
\path[->]
(m-1-3) edge node[auto] {$EI_{2}(\pomap{2}{0})$} (m-2-3);
\end{tikzpicture}
\]
The only differences between $\EIbidg{i}{2}{r,s}{X}$ and $\EIbidg{i}{0}{r,s}{X}$, $i=1,2$, arise for $s=1,2$. We then only check these cases. 
For $s=1$, there is nothing to check and everything commutes. For $s=2$, the only thing to check is the commutativity of the square
\[
\hspace{-1em}
\begin{tikzpicture}[injection/.style={right hook->,fill=white, inner sep=2pt}]
\matrix (m)[matrix of math nodes, row sep=3em, column sep=5em, text height=2ex, text depth=0.25ex]
{ \mathcal{I}^{2}_{1} \oplus H^{0}(D)\otimes \mathbf{Q}[t]dt  &  \ker j^{2}  & \ker j^{2} \\
  \mathcal{I}^{2}_{0} \oplus H^{0}(D)\otimes \mathbf{Q}[t]dt  &  \D{(\ker j^{4})} \oplus  \D{(\ker \gamma^{4})} \otimes t   & \coker{\gamma^{2}}\\};

\path[->]
(m-1-2) edge node[auto,swap] {$\psip{2}{0,2}$} (m-1-1);
\path[->]
(m-1-2) edge node[auto] {$\phip{2}{0,2}$} (m-1-3);

\path[->]
(m-2-2) edge node[auto] {$\psip{0}{0,2}$} (m-2-1);
\path[->]
(m-2-2) edge node[auto,swap] {$\phip{0}{0,2}$} (m-2-3);

\path[injection]
(m-1-1) edge node[auto,swap] {$EI_{1}(\pomap{2}{0})$} (m-2-1);
\path[injection]
(m-1-2) edge node[auto] {$\pomap{2}{0}$} (m-2-2);
\path[injection]
(m-1-3) edge node[auto] {$EI_{2}(\pomap{2}{0})$} (m-2-3);
\end{tikzpicture}
\]
Which is clear by the previous computations.

\subsubsection{The infinite perversity}
We finish with the perversity $\overline{\infty}$. We define the bigraded differential algebra $(\FI{\infty}{r,s}{X},\diff{\infty}{r,s})$ as the sub-algebra of $(\EI{1}{\infty}{X},\difb{\infty}{r,s})$ given by
\[
\begin{array}{| c || c c c |} 
\hline
s = 6          & 0        & H^{6}(\widetilde{X}) &  0  \\
\hline
s = 5          & 0        & H^{5}(\widetilde{X}) &  0  \\
\hline
s = 4          & 0        &  \ker j^{4}          &  0  \\
\hline
s = 3          & 0        &  \ker j^{3}          &  0  \\
\hline
s = 2          & 0        &  \ker j^{2}          & \D{(\ker \gamma^{4})} \otimes dt  \\
\hline
s=1            & 0        &  \ker j^{1}          & 0  \\
\hline
\hline
s = 0          & 0        & H^{0}(\widetilde{X}) & 0 \\
\hline
\hline
\FI{\infty}{r,s}{X} & r=-1     & r=0                  & r=1 \\
\hline 
\end{array}  
\]
There is no non trivial differentials. The algebra structure is as always concentrated in $r=0$. The map $\pomap{\infty}{4}$ is the canonical inclusion and is compatible the algebra structure.

The maps $\psip{\infty}{\ast,\ast}$ and $\phip{\infty}{\ast,\ast}$ are clear from the previous computations for the top perversity. 

We then have a quasi-isomorphism of algebras
\[
(\EIbidg{1}{\infty}{r,s}{X},\difb{\infty}{r,s}) \overset{\psip{\infty}{r,s}}{\longleftarrow} (\FI{\infty}{r,s}{X},\diff{\infty}{r,s}) \overset{\phip{\infty}{r,s}}{\longrightarrow} (\EIbidg{2}{\infty}{r,s}{X},0).
\]

We then define the coperverse cdga $\FI{\bullet}{\ast,\ast}{X}$ to be
\[
\FI{\bullet}{\ast,\ast}{X} = 
\begin{cases}
\FI{\infty}{\ast,\ast}{X} & \overline{p} = \infty, \\
\FI{4}{\ast,\ast}{X} & \overline{p} \in \lbrace \overline{3} , \overline{4} \rbrace, \\
\FI{2}{\ast,\ast}{X} & \overline{p} = \overline{2}, \\
\FI{0}{\ast,\ast}{X} & \overline{p} \in \lbrace \overline{0} , \overline{1} \rbrace. \\
\end{cases}
\]

We then have a quasi-isomorphism of coperverse cdga's.
\[
(\EIbidg{1}{\bullet}{\ast,\ast}{X},\difb{\bullet}{\ast,\ast}) \overset{\psip{\bullet}{\ast,\ast}}{\longleftarrow} (\FI{\bullet}{\ast,\ast}{X},\diff{\bullet}{\ast,\ast}) \overset{\phip{\bullet}{\ast,\ast}}{\longrightarrow} (\EIbidg{2}{\bullet}{\ast,\ast}{X},0).
\]

Then $\I{\bullet}{X}$ is formal.

\section{Examples and Applications}
\label{section:Eg}

We use the following conventions in the rest of this section :
\begin{itemize}
\item When needed, we will denote by $\lbrace 1_{i}, E_{i} \rbrace$ a basis of $H^{\ast}(\mathbf{C}P^{1}_{(i)})$, we complete it into a basis $\lbrace 1_{i}, E_{i}, \mathcal{E}_{i}, \Lambda_{i} \rbrace$ of $H^{\ast}(\mathbf{C}P^{1}_{(i)}\times \mathbf{C}P^{1}_{(i)})$ with $|E_{i}|=|\mathcal{E}_{i}|=2$, $|\Lambda_{i}|=4$ and where $\mathcal{E}_{i} E_{i} = \Lambda_{i}$.
\item even if we do not take into account the loops in the first cohomology group (see subsection \ref{subsubsec:coker}), we mark them in red 
\end{itemize}

\subsection{Projective cone over a K3 surface}

\begin{defi}
A K3 surface $S$ is a simply connected compact smooth complex surface such that its canonical bundle $K_{S}$ is trivial.
\end{defi}

Denote by $S$ a K3 surface, for example a nonsingular degree 4 hypersurface in $\mathbf{C}P^{3}$, such as the Fermat quartic 
\[
S = \lbrace  [z_{0}:z_{1}:z_{2}:z_{3}] \in \mathbf{C}P^{3} \, : \, z_{0}^{4}+z_{1}^{4}+z_{2}^{4}+z_{3}^{4}=0 \rbrace.
\]
In fact every K3 surface over $\mathbf{C}$ is diffeomorphic to this example, see \cite{1964}. The Hodge diamond of a K3 surface is completely determined and is given by the following.
\[
\begin{tikzpicture}
\matrix (m)[matrix of math nodes, row sep=0.5em, column sep=1em, text height=2ex, text depth=0.25ex]
{       &         & h^{2,2}  &         &   \\
        & h^{2,1} &          & h^{1,2} &   \\
h^{2,0} &         & h^{1,1}  &         &  h^{0,2} \quad =\\
        & h^{1,0} &          & h^{0,1} &   \\
        &         & h^{0,0}  &         &   \\};
\end{tikzpicture}
\begin{tikzpicture}
\matrix (m)[matrix of math nodes, row sep=0.5em, column sep=1.5em, text height=2ex, text depth=0.25ex]
{    &   & 1  &   &   \\
     & 0 &    & 0 &   \\
   1 &   & 20 &   &  1 \\
     & 0 &    & 0 &   \\
     &   &  1 &   &   \\};
\end{tikzpicture}
\]

Which means that we have the following cohomology.
\[
\begin{array}{| c || c | c | c | c | c |}
\hline
s               & 0          & 1 & 2               & 3 & 4          \\
\hline
\hline
H^{s}(S)        & \mathbf{Q} & 0 & \mathbf{Q}^{22} & 0 & \mathbf{Q} \\
\hline 
\end{array} 
\]

Denote by $\mathbb{P}_{\mathbf{C}}S \subset \mathbf{C}P^{4}$ the projective cone over the K3 surface. This is a simply connected hypersurface of complex dimension 3 with only one isolated singularity which is the cone point and defined by the same equation but in $\mathbf{C}P^{4}$
\[
\mathbb{P}_{\mathbf{C}}S = \lbrace  [z_{0}:z_{1}:z_{2}:z_{3}:z_{4}] \in \mathbf{C}P^{4} \, : \, z_{0}^{4}+z_{1}^{4}+z_{2}^{4}+z_{3}^{4}=0 \rbrace.
\]
The cohomology of $\mathbb{P}_{\mathbf{C}}S$ is given by (see \cite[p.169]{Dimca1992})
\[
H^{k}(\mathbb{P}_{\mathbf{C}}S) = H^{k-2}(S) \, \forall \, k \geq 2.
\]
By Hironaka's Theorem on resolution of singularities there exists a cartesian diagram
\[
\begin{tikzpicture}[injection/.style={right hook->,fill=white, inner sep=2pt}]
\matrix (m)[matrix of math nodes, row sep=3em, column sep=5em, text height=2ex, text depth=0.25ex]
{ S     &  \widetilde{P}  \\
  \ast  &  \mathbb{P}_{\mathbf{C}}S \\};

\path[->]
(m-1-1) edge node[auto] {} (m-1-2);
\path[injection]
(m-2-1) edge node[auto,swap] {} (m-2-2);
\path[->]
(m-1-1) edge node[auto,swap] {} (m-2-1);
\path[->]
(m-1-2) edge node[auto] {$f$} (m-2-2);
\end{tikzpicture}
\]
where the exceptional divisor is the K3 surface $S$ and $\widetilde{P}$ is a smooth projective variety of complex dimension 3. We then have the following Mayer-Vietoris sequence 
\[
\cdots \rightarrow H^{k}(\mathbb{P}_{\mathbf{C}}S) \rightarrow H^{k}(\widetilde{P}) \oplus H^{k}(\ast) \rightarrow H^{k}(S) \rightarrow \cdots
\]
which gives the following cohomology for $\widetilde{P}$. 
\[
\begin{array}{| c || c | c | c | c | c | c | c |}
\hline
s                    & 0          & 1 & 2                                 & 3 & 4                                  & 5 & 6 \\
\hline
\hline
H^{s}(\widetilde{P}) & \mathbf{Q} & 0 & \mathbf{Q} \oplus \mathbf{Q}^{22} & 0 & \mathbf{Q} \oplus \mathbf{Q}^{22}  & 0 & \mathbf{Q} \\
\hline 
\end{array} 
\]

We compute the intersection space for the perversities $\lbrace \overline{0}, \overline{1}, \overline{2}, \overline{3}, \overline{4} \rbrace$. 

First of all The intersection space for the zero perversity is by definition the regular part, which is computed by the following spectral sequence
\[
\begin{array}{| c || c | c || c | c |}
\hline
\multicolumn{3}{|c||}{E_{1}^{r,s}((\mathbb{P}_{\mathbf{C}}S)_{reg})}   &  \multicolumn{2}{c|}{E_{2}^{r,s}((\mathbb{P}_{\mathbf{C}}S)_{reg})} \\
\hline
s=6            & \mathbf{Q}       & \mathbf{Q}                        &   0                 &0\\ 
\hline
s=5            & 0                & 0                                 &   0                 &0\\ 
\hline
s=4            & \mathbf{Q}^{22}  & \mathbf{Q} \oplus \mathbf{Q}^{22} &   0                 & \mathbf{Q}\\
\hline
s=3            & 0                & 0                                 &   0                 &0\\
\hline
s=2            & \mathbf{Q}       & \mathbf{Q} \oplus \mathbf{Q}^{22} &   0                 &  \mathbf{Q}^{22}\\
\hline
s=1            & 0                & 0                                 &   0                 &0\\
\hline
s=0            & 0                & \mathbf{Q}                        &   0                 &\mathbf{Q}\\
\hline
\hline
               & r=-1             & r=0                               &  \ker \gamma^{s}    &  \coker{\gamma^{s}}                          \\
\hline 
\end{array}  
\]

Now we need the cohomology of the link, which is given by the spectral sequence defined by $j^{s}_{\sharp} \col H^{s-2}(D) \rightarrow H^{s}(D)$, as in the section \ref{subsec:smooth_div}.
\[
\begin{array}{| c || c | c || c | c |}
\hline
\multicolumn{3}{|c||}{E_{1}^{r,s}(L)}   &  \multicolumn{2}{c|}{E_{2}^{r,s}(L)} \\
\hline
s=6            & \mathbf{Q}       & 0                                 &   \mathbf{Q}        &0\\ 
\hline
s=5            & 0                & 0                                 &   0                 &0\\ 
\hline
s=4            & \mathbf{Q}^{22}  & \mathbf{Q}                        &   \mathbf{Q}^{21}   & 0\\
\hline
s=3            & 0                & 0                                 &   0                 &0\\
\hline
s=2            & \mathbf{Q}       & \mathbf{Q}^{22}                   &   0                 &  \mathbf{Q}^{21}\\
\hline
s=1            & 0                & 0                                 &   0                 &0\\
\hline
s=0            & 0                & \mathbf{Q}                        &   0                 &\mathbf{Q}\\
\hline
\hline
               & r=-1             & r=0                               &  \ker j_{\sharp}^{s}    &  \coker{j_{\sharp}^{s}}                          \\
\hline 
\end{array}  
\]

We then have 
\[
\begin{cases}
H^{0}(L) & = H^{5}(L) = \mathbf{Q}, \\
H^{1}(L) & = H^{4}(L) = 0, \\
H^{2}(L) & = H^{3}(L) = \mathbf{Q}^{21}. \\
\end{cases}
\]

By the $E_{2}$ term of the previous spectral sequence we see that the only sections of $j^{s}_{\sharp}$ for which the image won't be zero correspond to the perversities $\overline{1}$ and $\overline{3}$. Each times the image of the section is equal to $\mathbf{Q}$, we then have the two following map
\[
\gamma^{2}_{|\coim{1}} \col \coim{1}=\mathbf{Q} \longrightarrow H^{2}(\widetilde{P}) = \mathbf{Q} \oplus \mathbf{Q}^{22},
\]
\[
\gamma^{4}_{|\coim{3}} \col \coim{3}=\mathbf{Q} \longrightarrow H^{4}(\widetilde{P}) = \mathbf{Q} \oplus \mathbf{Q}^{22}.
\]
and $\coker{\gamma^{2}_{|\coim{1}}} \cong \coker{\gamma^{4}_{|\coim{3}}} \cong \mathbf{Q}^{22}$.

The last map we need to know is $j^{s} \col H^{s}(\widetilde{P}) \rightarrow H^{s}(S)$, the map induced by the inclusion $S \hookrightarrow \widetilde{P}$.
\[
\begin{array}{| c || c | c |}
\hline
s=6            & \mathbf{Q}       & 0   \\ 
\hline
s=5            & 0                & 0     \\ 
\hline
s=4            & \mathbf{Q}^{22}  & 0    \\
\hline
s=3            & 0                & 0   \\
\hline
s=2            & \mathbf{Q}       & 0    \\
\hline
s=1            & 0                & 0    \\
\hline
s=0            & 0                & 0    \\
\hline
\hline
               & \ker j^{s}       & \coker{j^{s}}    \\
\hline 
\end{array}  
\]

We recall the $EI_{2}$ term of the spectral sequence of $\I{p}{X}$.
\[
\begin{array}{| c || c c c |} 
\hline
s > p+1        & \ker \gamma^{s} &  \coker{\gamma^{s}}             &  0  \\
\hline
s = p+1        & 0               &  \coker{\gamma^{s}_{|\coim{p}}} & 0  \\
\hline
1 \leq s < p+1 & 0               &  \ker j^{s}                     & \coker{j^{s}}  \\
\hline
\hline
s = 0          & 0               & H^{0}(\widetilde{P})            & 0 \\
\hline
\hline
\EIbidg{2}{p}{r,s}{X} & r=-1            & r=0                      & r=1 \\
\hline 
\end{array}  
\]

We then have the following results.

\[
\begin{array}{| c ||  c  |}
\hline
\multicolumn{2}{|c|}{\EIbidg{2}{1}{r,s}{\mathbb{P}_{\mathbf{C}}S}} \\ 
\hline
s \geq 5                      &  0                 \\
\hline
s = 4                         &  \mathbf{Q}       \\
\hline
s = 3                         &  0                 \\
\hline
s = 2                         &  \mathbf{Q}^{22}   \\
\hline
s = 1                         &  0                 \\
\hline
\hline
s = 0                         & H^{0}(\widetilde{P}) \\
\hline
\hline
                             & r=0                               \\
\hline 
\end{array}  
\quad
\begin{array}{| c || c |} 
\hline
\multicolumn{2}{|c|}{\EIbidg{2}{2}{r,s}{\mathbb{P}_{\mathbf{C}}S}} \\ 
\hline
s \geq 5                      &  0                \\
\hline
s = 4                         &  \mathbf{Q}       \\
\hline
s = 3                         &  0                 \\
\hline
s = 2                         &  \mathbf{Q}        \\
\hline
s = 1                         &  0                 \\
\hline
\hline
s = 0                         & H^{0}(\widetilde{P})          \\
\hline
\hline
                             & r=0                               \\
\hline 
\end{array}  
\quad
\begin{array}{| c ||  c|}
\hline
\multicolumn{2}{|c|}{\EIbidg{2}{3}{r,s}{\mathbb{P}_{\mathbf{C}}S}} \\ 
\hline
s \geq 5                      &  0                 \\
\hline
s = 4                         &  \mathbf{Q}^{22}       \\
\hline
s = 3                         &  0                 \\
\hline
s = 2                         &  \mathbf{Q}        \\
\hline
s = 1                         &  0                 \\
\hline
\hline
s = 0                         & H^{0}(\widetilde{P})           \\
\hline
\hline
                              & r=0                               \\
\hline 
\end{array}  
\quad 
\begin{array}{| c || c |} 
\hline
\multicolumn{2}{|c|}{\EIbidg{2}{4}{r,s}{\mathbb{P}_{\mathbf{C}}S}} \\ 
\hline
s \geq 5                      &  0                 \\
\hline
s = 4                         &  \mathbf{Q}^{22}  \\
\hline
s = 3                         &  0                 \\
\hline
s = 2                         &  \mathbf{Q}        \\
\hline
s = 1                         &  0                 \\
\hline
\hline
s = 0                         & H^{0}(\widetilde{P})           \\
\hline
\hline
                              & r=0                               \\
\hline 
\end{array}  
\]

Note that for complementary perversities, such as $\overline{1}$ and $\overline{3}$ or $\overline{0}$ and $\overline{4}$, and for $s \neq 0$ the $EI_{2}$ term gives back the generalized Poincaré duality between the various intersection spaces such as proved in \cite[theorem 2.12]{Banagl2010}. The middle perversity here is $\overline{2}$ and we also get back the self-duality of the space $\I{2}{\mathbb{P}_{\mathbf{C}}S}$.

For any perversity $\overline{p}$ the weight filtration is pure, so by the theorem \ref{thm:pure_is_formal} we get the following proposition.

\begin{propo}
Given any perversity $\overline{p}$, the intersection space $\I{p}{\mathbb{P}_{\mathbf{C}}S}$ is a formal topological space.
\end{propo}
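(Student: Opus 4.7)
The plan is to invoke the purity-implies-formality corollary of Theorem \ref{thm:pure_is_formal}, so the entire content reduces to checking that $\I{\bullet}{\mathbb{P}_{\mathbf{C}}S}$ is $(\overline{0}, \infty)$-pure: for every perversity $\overline{p}$ and every degree $k$, the weight filtration on $\HI{p}{k}{\mathbb{P}_{\mathbf{C}}S}$ must be pure of weight $k$.

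I would read purity off directly from the four $EI_{2}$ tables just displayed, using the identification $\EIbidg{2}{p}{r,s}{\mathbb{P}_{\mathbf{C}}S} \cong \gr{s}{W}{\HI{p}{r+s}{\mathbb{P}_{\mathbf{C}}S}}$. Each of the tables is supported only in the column $r=0$: the potential $r=-1$ entries $\ker \gamma^{s}$ vanish because the Gysin maps $\gamma^{s}$ are injective in every relevant degree, and the potential $r=1$ entries $\coker{j^{s}}$ vanish because the restriction maps $j^{s}$ are surjective for $s \geq 1$ -- both facts being immediate from the explicit cohomologies of $S$ and $\widetilde{P}$ computed above. This gives $(\overline{p},\infty)$-purity for every $\overline{p} \in \lbrace \overline{1}, \overline{2}, \overline{3}, \overline{4} \rbrace$; the remaining case $\overline{p} = \overline{0}$ is settled by the homotopy equivalence $\I{0}{X} \simeq \I{1}{X}$ already recorded in the paper (a consequence of the simply-connected link hypothesis together with Definition \ref{def:intersectionspace2}).

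Applying the corollary of Theorem \ref{thm:pure_is_formal} at the maximum $\overline{0} \in \mathcal{P}^{op}$ then yields formality of $\I{s}{\mathbb{P}_{\mathbf{C}}S}$ for every $\overline{s} \leq \overline{0}$ in $\mathcal{P}^{op}$, i.e.\ for every finite perversity. For the supplementary perversity $\overline{\infty}$ one has $\I{\infty}{\mathbb{P}_{\mathbf{C}}S} = \mathbb{P}_{\mathbf{C}}S$ since the variety is already normal, and formality follows from the Chataur--Cirici theorem on projective varieties with isolated singularities and simply-connected links \cite{Chataur2015}. No step is delicate: the whole argument is a direct inspection of the $EI_{2}$ tables followed by two theorem invocations already available in the paper.
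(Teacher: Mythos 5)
Your proposal is correct and follows essentially the same route as the paper: the paper's own proof consists precisely of observing that the $EI_{2}$ tables for $\mathbb{P}_{\mathbf{C}}S$ are concentrated in the column $r=0$ (so the weight filtration on each $\HI{p}{k}{\mathbb{P}_{\mathbf{C}}S}$ is pure of weight $k$) and then invoking Theorem \ref{thm:pure_is_formal}. Your extra care with the cases $\overline{p}=\overline{0}$ (via $\I{0}{X}\simeq\I{1}{X}$, or equivalently via the regular-part spectral sequence) and $\overline{p}=\overline{\infty}$ is consistent with the paper and does not change the argument.
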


\subsection{Kummer quartic surface}
\label{subsec:Kummer}

Let $K$ be a Kummer quartic surface, that is an irreducible surface of degree 4 in $\mathbf{C}P^{3}$ with 16 ordinary double points, which is the maximum for such surfaces. 

From the algebraic topologist point of view, a Kummer surface is constructed in the following way. Let's consider a 4-dimensional torus
\[
\mathbf{T} = S^{1} \times S^{1} \times S^{1} \times S^{1}
\]  
endowed with the complex involution $\tau \col z \mapsto \bar{z}$ action. This action has 16 fixed point and we define the Kummer surface to be the quotient complex surface
\[
K := \mathbf{T}/\tau.
\]
We have the following cohomology for $K$.
\[
\begin{array}{| c || c | c | c | c | c |}
\hline
s               & 0          & 1 & 2              & 3 & 4     \\
\hline
\hline
H^{s}(K)        & \mathbf{Q} & 0 & \mathbf{Q}^{6} & 0 & \mathbf{Q} \\
\hline 
\end{array} 
\]

The link of each singularity is then a projective space $\mathbf{R}P^{3}$. These singularities are quotients singularities so by \cite{Durfee1979} $K$ admits a resolution where the exceptional set consists of curves of genus zero and self-intersection $-2$. Which means we have the following resolution diagram
\[
\begin{tikzpicture}[injection/.style={right hook->,fill=white, inner sep=2pt}]
\matrix (m)[matrix of math nodes, row sep=3em, column sep=5em, text height=2ex, text depth=0.25ex]
{ \bigsqcup_{i=1}^{16} \mathbf{C}P^{1}_{(i)}     &  \widetilde{K}  \\
  \bigsqcup_{i=1}^{16} \ast_{(i)}                &  K \\};

\path[->]
(m-1-1) edge node[auto] {} (m-1-2);
\path[injection]
(m-2-1) edge node[auto,swap] {} (m-2-2);
\path[->]
(m-1-1) edge node[auto,swap] {} (m-2-1);
\path[->]
(m-1-2) edge node[auto] {$f$} (m-2-2);
\end{tikzpicture}
\]

The Mayer-Vietoris sequence gives the following cohomology for $\widetilde{K}$.
\[
\begin{array}{| c || c | c | c | c | c |}
\hline
s                    & 0          & 1 & 2                                                        & 3 & 4     \\
\hline
\hline
H^{s}(\widetilde{K}) & \mathbf{Q} & 0 & \mathbf{Q}^{6}\oplus \bigoplus_{i=1}^{16}\mathbf{Q}E_{i} & 0 & \mathbf{Q} \\
\hline 
\end{array} 
\]

We have the fairly easy following spectral sequence for the links.
\[
\begin{array}{| c || c | c || c | c |}
\hline
\multicolumn{3}{|c||}{E_{1}^{r,s}(L)}   &  \multicolumn{2}{c|}{E_{2}^{r,s}(L)} \\
\hline
s=4            & \bigoplus_{i=1}^{16}\mathbf{Q}E_{i}  & 0                                    &   \bigoplus_{i=1}^{16}\mathbf{Q}E_{i}   & 0\\
\hline
s=3            & 0                                    & 0                                    &   0                 &0\\
\hline
s=2            & \bigoplus_{i=1}^{16}\mathbf{Q}1_{i}  & \bigoplus_{i=1}^{16}\mathbf{Q}E_{i}  &   0                 &  0\\
\hline
s=1            & 0                                    & 0                                    &   0                 &0\\
\hline
s=0            & 0                                    & \bigoplus_{i=1}^{16}\mathbf{Q}1_{i}  &   0                 &\bigoplus_{i=1}^{16}\mathbf{Q}1_{i}\\
\hline
\hline
               & r=-1                                 & r=0                                  &  \ker j_{\sharp}^{s}    &  \coker{j_{\sharp}^{s}}                          \\
\hline 
\end{array}  
\]
The rational cohomology of link of each singularities is then a 3-sphere, which is the rationalization of $\mathbf{R}P^{3}$.

The only interesting perversity here is the middle perversity $\overline{1}$. We need a $\coim{1}$ for the computation, we have here
\[
\coim{1} = \bigoplus_{i=1}^{16}\mathbf{Q}1_{i}
\]
and $\gamma^{2}_{|\coim{1}} = \gamma^{2}$. 

The following spectral sequence computes the regular part and the second array is the restriction map $j^{s}$.
\[
\begin{array}{| c || c | c || c | c |}
\hline
\multicolumn{3}{|c||}{E_{1}^{r,s}(K_{reg}) \quad \gamma^{s} \col H^{s-2}(D) \longrightarrow H^{s}(\widetilde{K})} &  \multicolumn{2}{c|}{E_{2}^{r,s}(K_{reg})} \\
\hline
s=4            & \bigoplus_{i=1}^{16}\mathbf{Q}E_{i}  & \mathbf{Q}                           &   \bigoplus_{i=1}^{15}\mathbf{Q}E_{i}   & 0\\
\hline
s=3            & 0                                    & 0                                    &   0                 &0\\
\hline
s=2            & \bigoplus_{i=1}^{16}\mathbf{Q}1_{i}  &\mathbf{Q}^{6}\oplus \bigoplus_{i=1}^{16}\mathbf{Q}E_{i}  &   0               &  \mathbf{Q}^{6}\\
\hline
s=1            & 0                                    & 0                                    &   0                 &0\\
\hline
s=0            & 0                                    & \mathbf{Q}                           &   0                 &\mathbf{Q}\\
\hline
\hline
               & r=-1                                 & r=0                                  &  \ker \gamma^{s}    &  \coker{\gamma^{s}}                          \\
\hline 
\end{array}  
\]

\[
\begin{array}{| c || c | c || c | c |}
\hline
s=4            & \mathbf{Q}                           & 0                                    &   \mathbf{Q}        & 0\\
\hline
s=3            & 0                                    & 0                                    &   0                 &0\\
\hline
s=2            & \mathbf{Q}^{6} \oplus \bigoplus_{i=1}^{16}\mathbf{Q}E_{i} &\bigoplus_{i=1}^{16}\mathbf{Q}E_{i}  &   \mathbf{Q}^{6} &  0\\
\hline
s=1            & 0                                    & 0                                    &   0                 &0\\
\hline
s=0            & \mathbf{Q}                           & \bigoplus_{i=1}^{16} \mathbf{Q}1_{i} &   0                 &\bigoplus_{i=1}^{15} \mathbf{Q}1_{i}\\
\hline
\hline
               & H^{s}(\widetilde{K})                 & H^{s}(D)                             &  \ker j^{s}         &  \coker{j^{s}}                          \\
\hline 
\end{array}  
\]

The cohomology of the middle perversity intersection space of a Kummer surface is then given by the following array. Note that the cohomology obtained isn't pure.
\[
\begin{array}{| c || c c c || c |}
\hline
s = 4          & \bigoplus_{i=1}^{15} \mathbf{Q}E_{i} & 0                    & 0 & 0\\
\hline
s = 3          & 0                                    & 0                    & 0 & \bigoplus_{i=1}^{15} \mathbf{Q}E_{i}\\
\hline
s = 2          & 0                                    & \mathbf{Q}^{6}       & 0 & \mathbf{Q}^{6} \\
\hline
s = 1          & 0                                    &   0                  & 0 & \textcolor{red}{\bigoplus_{i=1}^{15} \mathbf{Q}1_{i}}  \\
\hline
\hline
s = 0          & 0                                    & H^{0}(\widetilde{K}) & \textcolor{red}{\bigoplus_{i=1}^{15} \mathbf{Q}1_{i}} &H^{0}(\widetilde{K})  \\
\hline 
\hline
\EIbidg{2}{1}{r,s}{K} & r=-1                          & r=0                  & r=1  &\HI{1}{s}{K}  \\
\hline 
\end{array}
\]

\subsection{The Calabi-Yau generic quintic 3-fold}
\label{subsec:CY_generic_quintic}

Let $Y \subset \mathbf{C}P^{4}$ the singular hypersurface given by the equation
\[
Y := \lbrace [z_{0}:z_{1}:z_{2}:z_{3}:z_{4}] \in \mathbf{C}P^{4} \, : \, z_{3}g(z_{0}, \dots, z_{4}) + z_{4}h(z_{0}, \dots, z_{4}) =0 \rbrace
\]
where $g$ and $h$ are generic homogeneous polynomials of degree 4. $Y$ is the Calabi-Yau generic quintic 3-fold containing the plane 
\[
\pi := \lbrace z_{3}=z_{4}=0 \rbrace \cong \mathbf{C}P^{2}.
\]

The singular locus
\[
\Sigma := \lbrace [x] \in \mathbf{C}P^{4} \, : \, z_{3}=z_{4}=g(z)=h(z)=0  \rbrace \subset \mathbf{C}P^{2}
\]
is given by 16 ordinary double points. That is the link of each singularity $\sigma \in \Sigma$ is topologically equal to $L_{\sigma} = S^{2} \times S^{3}$.

We have the following cohomology for $Y$.
\[
\begin{array}{| c || c | c | c | c | c | c | c |}
\hline
s               & 0          & 1 & 2          & 3                & 4               & 5 & 6 \\
\hline
\hline
H^{s}(Y)        & \mathbf{Q} & 0 & \mathbf{Q} & \mathbf{Q}^{189} & \mathbf{Q}^{2}  & 0 & \mathbf{Q} \\
\hline 
\end{array} 
\]

We consider the following commutative diagram of resolutions
\[
\begin{tikzpicture}
\matrix (m)[matrix of math nodes, row sep=3em, column sep=2.5em, text height=1.5ex, text depth=0.25ex]
{\bigsqcup_{i=1}^{16} \mathbf{C}P^{1}_{(i)} \times \mathbf{C}P^{1}_{(i)}  & \bigsqcup_{i=1}^{16} \mathbf{C}P^{1}_{(i)}  & \bigsqcup_{i=1}^{16} \ast_{(i)}  \\
  \overline{Y}                                                            & \widetilde{Y}                               & Y \\};

\path[->]
(m-1-1) edge node[right=2.5em] {Blow up} (m-2-1);
\path[->]
(m-1-2) edge node[right=1.25em] {small res.} (m-2-2);
\path[->]
(m-1-3) edge node[auto] {} (m-2-3);

\path[->]
(m-1-1) edge node[auto] {} (m-1-2);
\path[->]
(m-1-2) edge node[auto] {} (m-1-3);

\path[->]
(m-2-1) edge node[auto,swap] {$\mathcal{B}\ell$} (m-2-2);
\path[->]
(m-2-2) edge node[auto,swap] {$f$} (m-2-3);
\end{tikzpicture}
\]

The first square is a simultaneous small resolution of the 16 singularities obtained by blowing up $\mathbf{C}P^{4}$ along the plane $\pi \cong \mathbf{C}P^{2}$. The exceptionnal divisor of this blow-up is a $\mathbf{C}P^{1}$-bundle over $\pi \cong \mathbf{C}P^{2}$.

For the second square $\mathcal{B}\ell$ is a blow-up along the $\mathbf{C}P^{1}_{(i)}$'s. 

Denote by $\Psi$ the generator of $H^{2}(Y)$.

By using twice the Mayer-Vietoris long exact sequence, we get the following cohomology for $\overline{Y}$.
\[
\begin{cases}
H^{0}(\overline{Y}) = H^{6}(\overline{Y}) = \mathbf{Q}, & \\
H^{1}(\overline{Y}) = H^{5}(\overline{Y}) = 0, &\\
H^{2}(\overline{Y}) = \mathbf{Q}\Psi \oplus \mathbf{Q}E_{1} \oplus \bigoplus_{i=1}^{16} \mathbf{Q}\D{\Lambda_{i}}, & \\
H^{4}(\overline{Y}) = \mathbf{Q}\D{\Psi} \oplus \mathbf{Q}\D{E_{1}} \oplus \bigoplus_{i=1}^{16} \mathbf{Q}\Lambda_{i}, & \\
H^{3}(\overline{Y}) = \mathbf{Q}^{174}.
\end{cases}
\]

The cohomology of the links of the singularities is given by the spectral sequence
\[
\begin{array}{| c || c | c || c | c |}
\hline
\multicolumn{3}{|c||}{E_{1}^{r,s}(L)} & \multicolumn{2}{c|}{E_{2}^{r,s}(L)} \\ 
\hline
s=6            & \bigoplus_{i=1}^{16} \mathbf{Q}\Lambda_{i}                              & 0                                                                       &\bigoplus_{i=1}^{16} \mathbf{Q}\Lambda_{i}              & 0   \\ 
\hline
s=5            & 0                                                                       & 0                                                                       & 0 & 0\\ 
\hline 
s=4            & \bigoplus_{i=1}^{16} (\mathbf{Q}E_{i} \oplus \mathbf{Q}\mathcal{E}_{i}) &  \bigoplus_{i=1}^{16} \mathbf{Q}\Lambda_{i}                             &\bigoplus_{i=1}^{16} \mathbf{Q}\mathcal{E}_{i} & 0\\
\hline
s=3            & 0                                                                       & 0                                                                       & 0 & 0\\
\hline
s=2            & \bigoplus_{i=1}^{16} \mathbf{Q}1_{i}                                    & \bigoplus_{i=1}^{16} (\mathbf{Q}E_{i} \oplus \mathbf{Q}\mathcal{E}_{i}) & 0 & \bigoplus_{i=1}^{16} \mathbf{Q}E_{i} \\
\hline
s=1            & 0                                                                       & 0                                                                       & 0 & 0 \\
\hline
s=0            & 0                                                                       & \bigoplus_{i=1}^{16} \mathbf{Q}1_{i}                                    & 0 & \bigoplus_{i=1}^{16} \mathbf{Q}1_{i}   \\
\hline
\hline
               & r=-1                                                                    & r=0                                                                     &\ker j^{s}_{\sharp} & \coker{j^{s}_{\sharp}} \\
\hline 
\end{array}  
\]

We here follow the section \ref{sec:formality_3folds} and do the computations for the top, middle and zero perversity.
The spectral sequence of the regular part is given by 
\[
\hspace{-0.5em}
\begin{array}{| c || c | c || c | c |}
\hline
\multicolumn{3}{|c||}{E_{1}^{r,s}(Y_{reg}) \quad \gamma^{s} \col H^{s-2}(D) \longrightarrow H^{s}(\overline{Y})} & \multicolumn{2}{c|}{E_{2}^{r,s}(Y_{reg})} \\
\hline 
s=6            & \bigoplus_{i=1}^{16} \mathbf{Q}\Lambda_{i}                               & \mathbf{Q}  & \bigoplus_{i=1}^{15} \mathbf{Q}\Lambda_{i}                      & 0   \\ 
\hline
s=5            & 0                                                                        & 0           & 0                                                               & 0     \\    
\hline
s=4            &  \bigoplus_{i=1}^{16} (\mathbf{Q}E_{i} \oplus \mathbf{Q}\mathcal{E}_{i}) & \mathbf{Q}\D{\Psi} \oplus \mathbf{Q}\D{E_{1}} \oplus \bigoplus_{i=1}^{16} \mathbf{Q}\Lambda_{i}  & \bigoplus_{i=1}^{15} \mathbf{Q}\mathcal{E}_{i}                  & \mathbf{Q}\D{\Psi}      \\
\hline
s=3            & 0                                                                        & \mathbf{Q}^{174} & 0                                                               & \mathbf{Q}^{174}    \\
\hline  
s=2            & \bigoplus_{i=1}^{16} \mathbf{Q}1_{i}                                     & \mathbf{Q}\Psi \oplus \mathbf{Q}E_{1} \oplus \bigoplus_{i=1}^{16} \mathbf{Q}\D{\Lambda_{i}} & 0                                                               & \mathbf{Q}\Psi \oplus \mathbf{Q}E_{1} \\
\hline
s=1            & 0                                                                        & 0   & 0                                                               & 0   \\
\hline
s=0            & 0                                                                        & \mathbf{Q}   & 0                                                               & \mathbf{Q}    \\
\hline
\hline
               & r=-1                                                                     & r=0       & \ker \gamma^{s}                                        &\coker{\gamma^{s}}             \\
\hline 
\end{array}  
\]

Finally we also need the restriction morphism $j^{s}$.
\[
\hspace{-0.9em}
\begin{array}{| c || c | c || c | c |}
\hline
s=6            & \mathbf{Q}                                                                                          & 0  & \mathbf{Q}                                                                               & 0  \\ 
\hline
s=5            & 0                                                                                                   & 0  & 0                                                                                        & 0     \\ 
\hline
s=4            & \mathbf{Q}\D{\Psi} \oplus \mathbf{Q}\D{E_{1}} \oplus \bigoplus_{i=1}^{16} \mathbf{Q}\Lambda_{i}     & \bigoplus_{i=1}^{16} \mathbf{Q}\Lambda_{i} & \mathbf{Q}\D{\Psi} \oplus \mathbf{Q}\D{E_{1}}                                            & 0   \\
\hline
s=3            & \mathbf{Q}^{174}                                                                                    & 0   & \mathbf{Q}^{174}                                                                         & 0  \\
\hline
s=2            & \mathbf{Q}\Psi \oplus \mathbf{Q}E_{1} \oplus \bigoplus_{i=1}^{16} \mathbf{Q}\D{\Lambda_{i}}         & \bigoplus_{i=1}^{16} (\mathbf{Q}E_{i} \oplus \mathbf{Q}\mathcal{E}_{i}) & \mathbf{Q}\Psi                                                                           & \bigoplus_{i=1}^{15}\mathbf{Q}E_{i}   \\
\hline
s=1            & 0                                                                                                   & 0    & 0                                                                                        & 0   \\
\hline
s=0            & \mathbf{Q}                                                                                          & \bigoplus_{i=1}^{16} \mathbf{Q}1_{i} & 0                                                                                        & \bigoplus_{i=1}^{15} \mathbf{Q}1_{i}   \\
\hline
\hline
               & H^{s}(\overline{Y})                                                                                 & H^{s}(D)    & \ker j^{s}                                                                               & \coker{j^{s}} \\
\hline 
\end{array}  
\]

We then get the following tables for the perversities $\overline{0}, \overline{2}, \overline{4}$. Note here that the generalized Poincaré duality is only partial as we explained in the subsection \ref{subsubsec:coker} since we do not take into accounts the loops of $\coker{j^{0}}$ (marked in red in the arrays).

\[
\begin{array}{| c || c c c || c |}
\hline
s = 6          & \bigoplus_{i=1}^{15} \mathbf{Q}\Lambda_{i}                         &  0                                    &   0  & 0 \\ 
\hline
s = 5          & 0                                                                  &  0                                    &   0  & \bigoplus_{i=1}^{15} \mathbf{Q}\Lambda_{i}  \\
\hline
s = 4          & \bigoplus_{i=1}^{15}\mathbf{Q}\mathcal{E}_{i}                      & \mathbf{Q}\D{\Psi}                    &   0  & \mathbf{Q}\D{\Psi} \\
\hline
s = 3          & 0                                                                  & \mathbf{Q}^{174}                      &   0  &  \mathbf{Q}^{189}   \\
\hline 
s = 2          & 0                                                                  & \mathbf{Q}\Psi \oplus \mathbf{Q}E_{1} &   0  & \mathbf{Q}\Psi \oplus \mathbf{Q}E_{1}  \\
\hline
s = 1          & 0                                                                  &     0                                 &   0  & \textcolor{red}{\bigoplus_{i=1}^{15} \mathbf{Q}1_{i}} \\
\hline
\hline
s = 0          & 0                                                                  & H^{0}(\overline{Y})                   & \textcolor{red}{\bigoplus_{i=1}^{15} \mathbf{Q}1_{i}} & H^{0}(\overline{Y}) \\
\hline
\hline
\EIbidg{2}{0}{r,s}{Y} & r=-1                                                        & r=0                                   & r=1 &  \HI{0}{s}{Y} \\
\hline 
\end{array}
\]

Note here the partial duality for the values $s=2,3,4$ for the perversities $\overline{0}$ and $\overline{4}$.

\[
\begin{array}{| c || c c c || c |}
\hline
s = 6          & \bigoplus_{i=1}^{15} \mathbf{Q}\Lambda_{i}                         &  0                                            &   0  & 0 \\ 
\hline
s = 5          & 0                                                                  &  0                                            &   0  & \bigoplus_{i=1}^{15} \mathbf{Q}\Lambda_{i}  \\
\hline
s = 4          & 0                                                                  & \mathbf{Q}\D{\Psi} \oplus \mathbf{Q}\D{E_{1}} &   0  & \mathbf{Q}\D{\Psi} \oplus \mathbf{Q}\D{E_{1}}\\
\hline
s = 3          & 0                                                                  & \mathbf{Q}^{174}                              &   0  & \mathbf{Q}^{189}\\
\hline 
s = 2          & 0                                                                  & \mathbf{Q}\Psi                                & \bigoplus_{i=1}^{15}\mathbf{Q}E_{i} & \mathbf{Q}\Psi    \\
\hline
s = 1          & 0                                                                  &   0                                           &   0 & \textcolor{red}{\bigoplus_{i=1}^{15} \mathbf{Q}1_{i}} \\
\hline
\hline
s = 0          & 0                                                                  & H^{0}(\overline{Y})                           & \textcolor{red}{\bigoplus_{i=1}^{15} \mathbf{Q}1_{i}} & H^{0}(\overline{Y})\\
\hline
\hline
\EIbidg{2}{4}{r,s}{Y} & r=-1                                                        & r=0                                           & r=1 &  \HI{4}{s}{Y} \\
\hline 
\end{array}
\]

For the perversity $\overline{2}$ we retrieve the values of the smooth deformation as in \cite{Banagl2012}, unless for $s=1$.

\[
\begin{array}{| c || c c c || c |}
\hline
s = 6          & \bigoplus_{i=1}^{15} \mathbf{Q}\Lambda_{i}    &  0                  &   0                                                   &0 \\ 
\hline
s = 5          & 0                                             &  0                  &   0                                                   &\bigoplus_{i=1}^{15} \mathbf{Q}\Lambda_{i} \\
\hline
s = 4          & \bigoplus_{i=1}^{15}\mathbf{Q}\mathcal{E}_{i} & \mathbf{Q}\D{\Psi}  &   0                                                   &\mathbf{Q}\D{\Psi}\\
\hline
s = 3          & 0                                             & \mathbf{Q}^{174}    &   0                                                   &\mathbf{Q}^{204}\\
\hline
s = 2          & 0                                             & \mathbf{Q}\Psi      & \bigoplus_{i=1}^{15}\mathbf{Q}E_{i}                   &\mathbf{Q}\Psi   \\
\hline
s = 1          & 0                                             &   0                 &   0                                                   &\textcolor{red}{\bigoplus_{i=1}^{15} \mathbf{Q}1_{i}}\\
\hline
\hline
s = 0          & 0                                             & H^{0}(\overline{Y}) & \textcolor{red}{\bigoplus_{i=1}^{15} \mathbf{Q}1_{i}} & H^{0}(\overline{Y}) \\
\hline 
\hline
\EIbidg{2}{2}{r,s}{Y} & r=-1                                   & r=0                 & r=1                                                   & \HI{2}{s}{Y}\\
\hline 
\end{array}
\]

\subsection{The Quintic}
\label{subsec:CY_quintic}

Let $\psi$ be a complex number and consider the variety
\[
X_{\psi} := \left\lbrace [z_{0}:z_{1}:z_{2}:z_{3}:z_{4}] \in \mathbf{C}P^{4} \, : \, z_{0}^{5} + z_{1}^{5} + z_{2}^{5} + z_{3}^{5} + z_{4}^{5} -5\psi z_{0}z_{1}z_{2}z_{3}z_{4}=0 \right\rbrace,
\]
which is Calabi-Yau. It is smooth for small $\psi \neq 1$ and becomes singular when $\psi =1$, denote by $X$ the singular degeneration $X_{\psi=1}$. 

The singular locus $\Sigma$ of $X$ is here composed of 125 ordinary double points. That is the link of each singularity $\sigma \in \Sigma$ is topologically equal to $L_{\sigma} = S^{2} \times S^{3}$, just like before.

We get the following cohomology for $X$.
\[
\begin{array}{| c || c | c | c | c | c | c | c |}
\hline
s               & 0          & 1 & 2          & 3                & 4               & 5 & 6 \\
\hline
\hline
H^{s}(X)        & \mathbf{Q} & 0 & \mathbf{Q} & \mathbf{Q}^{103} & \mathbf{Q}^{25} & 0 & \mathbf{Q} \\
\hline 
\end{array} 
\]

Using the same method of resolution that before
\[
\begin{tikzpicture}
\matrix (m)[matrix of math nodes, row sep=3em, column sep=2.5em, text height=1.5ex, text depth=0.25ex]
{\bigsqcup_{i=1}^{125} \mathbf{C}P^{1}_{(i)} \times \mathbf{C}P^{1}_{(i)}  & \bigsqcup_{i=1}^{125} \mathbf{C}P^{1}_{(i)}  & \bigsqcup_{i=1}^{125} \ast_{(i)}  \\
 \overline{X}                                                              & \widetilde{X}                                & X \\};

\path[->]
(m-1-1) edge node[right=2.5em] {Blow up} (m-2-1);
\path[->]
(m-1-2) edge node[right=1.25em] {small res.} (m-2-2);
\path[->]
(m-1-3) edge node[auto] {} (m-2-3);

\path[->]
(m-1-1) edge node[auto] {} (m-1-2);
\path[->]
(m-1-2) edge node[auto] {} (m-1-3);

\path[->]
(m-2-1) edge node[auto,swap] {$\mathcal{B}\ell$} (m-2-2);
\path[->]
(m-2-2) edge node[auto,swap] {$f$} (m-2-3);
\end{tikzpicture}
\]

With the Mayer-Vietoris long exact sequence, we get the following cohomology for $\overline{X}$, we still denote by $\Psi$ the generator of $H^{2}(X)$.
\[
\begin{array}{| c || c |}
\hline  
6      & \mathbf{Q} \\
\hline 
5      & 0 \\
\hline 
4      & \mathbf{Q}\D{\Psi} \oplus \bigoplus_{i=1}^{24}\D{E_{i}} \oplus \bigoplus_{i=1}^{125}\mathbf{Q}\Lambda_{i} \\
\hline  
3      & \mathbf{Q}^{2} \\
\hline 
2      & \mathbf{Q}\Psi \oplus \bigoplus_{i=1}^{24}\mathbf{Q}E_{i} \oplus \bigoplus_{i=1}^{125}\mathbf{Q}\D{\Lambda_{i}} \\
\hline 
1      & 0 \\
\hline 
0      & \mathbf{Q} \\
\hline 
\hline 
s      & H^{s}(\overline{X}) \\
\hline 
\end{array}
\]

The spectral sequences of the regular part if given by
\[
\hspace{-2.7em}
\begin{array}{| c || c | c || c | c |}
\hline
\multicolumn{3}{|c||}{E_{1}^{r,s}(X_{reg}) \quad \gamma^{s} \col H^{s-2}(D) \longrightarrow H^{s}(\overline{X})} & \multicolumn{2}{c|}{E_{2}^{r,s}(X_{reg})} \\
\hline 
s=6            & \bigoplus_{i=1}^{125} \mathbf{Q}\Lambda_{i}                               & \mathbf{Q}          &  \bigoplus_{i=1}^{124} \mathbf{Q}\Lambda_{i}                      & 0  \\ 
\hline
s=5            & 0                                                                         & 0                   &  0                                                                & 0   \\ 
\hline
s=4            &  \bigoplus_{i=1}^{125} (\mathbf{Q}E_{i} \oplus \mathbf{Q}\mathcal{E}_{i}) & \mathbf{Q}\D{\Psi} \oplus \bigoplus_{i=1}^{24}\D{E_{i}} \oplus \bigoplus_{i=1}^{125}\mathbf{Q}\Lambda_{i}    & \bigoplus_{i=1}^{101} \mathbf{Q}\mathcal{E}_{i}                            & \mathbf{Q}\D{\Psi}\\
\hline
s=3            & 0                                                                         & \mathbf{Q}^{2}      & 0                                                                & \mathbf{Q}^{2}  \\
\hline  
s=2            & \bigoplus_{i=1}^{125} \mathbf{Q}1_{i}                                     & \mathbf{Q}\Psi \oplus \bigoplus_{i=1}^{24}\mathbf{Q}E_{i} \oplus \bigoplus_{i=1}^{125}\mathbf{Q}\D{\Lambda_{i}}    & 0                                                                & \mathbf{Q}\Psi \oplus \bigoplus_{i=1}^{24}\mathbf{Q}E_{i}\\
\hline
s=1            & 0                                                                         & 0                   &0                                                                & 0 \\
\hline
s=0            & 0                                                                         & \mathbf{Q}          & 0                                                                & \mathbf{Q} \\
\hline
\hline
               & r=-1                                                                      & r=0                 &   \ker\gamma^{s}                                         &\coker{\gamma^{s}}                      \\
\hline 
\end{array}  
\]

The formulas for the restriction morphism are
\[
\hspace{-3.2em}
\begin{array}{| c || c | c || c | c |}
\hline
s=6            & \mathbf{Q}                                                                                          & 0 & \mathbf{Q}                                                                               & 0   \\ 
\hline
s=5            & 0                                                                                                   & 0  & 0                                                                                        & 0     \\ 
\hline
s=4            & \mathbf{Q}\D{\Psi} \oplus \bigoplus_{i=1}^{24}\D{E_{i}} \oplus \bigoplus_{i=1}^{125}\mathbf{Q}\Lambda_{i}   & \bigoplus_{i=1}^{125} \mathbf{Q}\Lambda_{i}  & \mathbf{Q}\D{\Psi} \oplus \bigoplus_{i=1}^{24}\mathbf{Q}\D{E_{i}}                    & 0  \\
\hline
s=3            & \mathbf{Q}^{2}                                                                                    & 0   & \mathbf{Q}^{2}                                                                           & 0 \\
\hline
s=2            & \mathbf{Q}\Psi \oplus \bigoplus_{i=1}^{24}\mathbf{Q}E_{i} \oplus \bigoplus_{i=1}^{125}\mathbf{Q}\D{\Lambda_{i}}                          & \bigoplus_{i=1}^{125} (\mathbf{Q}E_{i} \oplus \mathbf{Q}\mathcal{E}_{i})   & \mathbf{Q}\Psi                                                                           & \bigoplus_{i=1}^{101}\mathbf{Q}E_{i}  \\
\hline
s=1            & 0                                                                                                   & 0 & 0                                                                                        & 0     \\
\hline
s=0            & \mathbf{Q}                                                                                          & \bigoplus_{i=1}^{125} \mathbf{Q}1_{i}  & 0                                                                                        & \bigoplus_{i=1}^{124} \mathbf{Q}1_{i}   \\
\hline
\hline
               & H^{s}(\overline{X})                                                                                 & H^{s}(D) & \ker j^{s}                                                                               & \coker{j^{s}}     \\
\hline 
\end{array}  
\]

We let the reader fill in the arrays for the top and zero perversities, we here give the result for the middle perversity $\overline{2}$.
\[
\begin{array}{| c || c c c || c |}
\hline
s = 6          & \bigoplus_{i=1}^{124} \mathbf{Q}\Lambda_{i}       &  0                 &   0                                                    & 0 \\ 
\hline
s = 5          & 0                                                 &  0                 &   0                                                    & \bigoplus_{i=1}^{124} \mathbf{Q}\Lambda_{i}    \\
\hline
s = 4          & \bigoplus_{i=1}^{101} \mathbf{Q}\mathcal{E}_{i}   & \mathbf{Q}\D{\Psi} &   0                                                    & \mathbf{Q}\D{\Psi} \\
\hline
s = 3          & 0                                                 & \mathbf{Q}^{2}     &   0                                                    & \mathbf{Q}^{204}\\
\hline
s = 2          & 0                                                 & \mathbf{Q}\Psi     & \bigoplus_{i=1}^{101}\mathbf{Q}E_{i}                   & \mathbf{Q}\Psi  \\
\hline
s = 1          & 0                                                 &   0                &   0                                                    &\textcolor{red}{\bigoplus_{i=1}^{124} \mathbf{Q}1_{i}} \\
\hline
\hline
s = 0          & 0                                                 & H^{0}(\overline{X})& \textcolor{red}{\bigoplus_{i=1}^{124} \mathbf{Q}1_{i}} & H^{0}(\overline{X})\\
\hline 
\hline
\EIbidg{2}{2}{r,s}{X} & r=-1                                       & r=0                & r=1                                                    & \HI{2}{s}{X} \\
\hline 
\end{array}
\]

\printbibliography

\end{document}